\newtheorem{teo}{Theorem}
\newtheorem{lem}{Lemma}
\newtheorem{remark}{Remark}
\newtheorem{ex}{Example}
\newtheorem{corol}{Corollary}
 \title{{\bf Natural lacunae method and Schatten-von Neumann classes  of the convergence exponent}}
\author{Maksim \,V.~Kukushkin   \\ \\
 \small  \textit{Moscow State University of Civil Engineering, 129337,  Moscow, Russia}\\
 \small\textit{Kabardino-Balkarian Scientific Center, RAS, 360051,  Nalchik, Russia}\\
\textit{\small\textit{kukushkinmv@rambler.ru}} }
\date{}
\begin{document}

\maketitle

\begin{abstract}
The first our aim is to clarify the results obtained by Lidskii  devoted to the decomposition on the root vector system of the non-selfadjoint operator. We use a technique of the entire function theory and introduce  a so-called  Schatten-von Neumann class of the convergence  exponent. Considering strictly accretive operators satisfying special conditions formulated in terms of the norm, we construct a sequence of contours of the power type on the  contrary to the results by Lidskii, where a sequence of contours of  the exponential  type was used.

\end{abstract}
\begin{small}\textbf{Keywords:}
 Strictly accretive operator;  Abel-Lidskii basis property;   Schatten-von Neumann  class; convergence exponent; counting function.   \\\\
{\textbf{MSC} 47B28; 47A10; 47B12; 47B10;  34K30; 58D25.}
\end{small}

\section{Introduction}

  Generally the concept origins from the  well-known fact that the eigenvalues of the compact selfadjoint operator form a basis in the closure of its range. The question what happens in the case when the operator is non-selfadjoint is rather complicated and  deserves to be considered as a separate part of the spectral theory. Basically, the aim of the mentioned  part of the spectral theory   are   propositions on the convergence of the root vector series in one or another sense to an element belonging to the closure of the operator range. Here, we should note when we  say a sense we mean   Bari, Riesz, Abel (Abel-Lidskii) senses of the series convergence  \cite{firstab_lit:2Agranovich1994},\cite{firstab_lit:1Gohberg1965}. A reasonable question that appears is about minimal conditions that guaranty the desired result, for instance in the mentioned papers  there considered a domain of the parabolic type containing the spectrum of the operator. In the paper \cite{firstab_lit:2Agranovich1994}, non-salfadjoint operators with the special condition imposed on the numerical range of values are considered. The main advantage of this result is a comparatively weak condition imposed upon the numerical range of values comparatively with the sectorial condition (see definition of the sectorial operator). Thus, the convergence in the Abel-Lidskii sense  was established for an operator class wider when the class of sectorial operators. Here, we  make comparison between results devoted to operators with the discrete spectra and operators with the compact resolvent for they can be easily reformulated from one to the other field.

  The central idea of this paper  is to formulate sufficient conditions of the Abel-Lidskii basis property of the root functions system for a sectorial non-selfadjoint operator of the special type. Considering such an operator class we strengthen a little the condition regarding the semi-angle of the sector, but weaken a great deal conditions regarding the involved parameters. Moreover, the central aim generates  some prerequisites to consider technical peculiarities such as a newly constructed sequence of contours of the power type on the contrary to the Lidskii results \cite{firstab_lit:1Lidskii}, where a sequence of the contours of the  exponential type was considered. Thus,  we clarify   results  \cite{firstab_lit:1Lidskii} devoted to the decomposition on the root vector system of the non-selfadjoint operator. We use  a technique of the entire function theory and introduce  a so-called  Schatten-von Neumann class of the convergence  exponent. Considering strictly accretive operators satisfying special conditions formulated in terms of the norm,   using a  sequence of contours of the power type,  we invent a peculiar  method how to calculate  a contour integral involved in the problem in its general statement.     Finally, we produce applications to differential equations in the abstract Hilbert space.
  In particular,     the existence and uniqueness theorems  for evolution equations   with the right-hand side -- a  differential operator  with a fractional derivative in  final terms are covered by the invented abstract method. In this regard such operators as a Riemann-Liouville  fractional differential operator,    Kipriyanov operator, Riesz potential,  difference operator are involved.
Note that analysis of the required  conditions imposed upon the right-hand side of the    evolution equations that are in the scope  leads us to   relevance of the central idea of the paper. In this regard we should note  a well-known fact  \cite{firstab_lit:Shkalikov A.},\cite{firstab_lit(arXiv non-self)kukushkin2018}   that a particular interest appears in the case when a senior term of the operator at least
    is not selfadjoint  for in the contrary case there is a plenty of results devoted to the topic wherein  the following papers are well-known
\cite{firstab_lit:1Katsnelson},\cite{firstab_lit:1Krein},\cite{firstab_lit:Markus Matsaev},\cite{firstab_lit:2Markus},\cite{firstab_lit:Shkalikov A.}. The fact is that most of them deal with a decomposition of the  operator  to a sum  where the senior term
     must be either a selfadjoint or normal operator. In other cases the  methods of the papers
     \cite{kukushkin2019}, \cite{firstab_lit(arXiv non-self)kukushkin2018} become relevant   and allow us  to study spectral properties of   operators  whether we have the mentioned above  representation or not. Here, we should remark that the results of  the papers \cite{firstab_lit:2Agranovich1994},\cite{firstab_lit:Markus Matsaev} applicable to study non-selfadjoin operators  are   based on the sufficiently strong assumption regarding the numerical range of values of the operator, whereas
the methods  \cite{firstab_lit(arXiv non-self)kukushkin2018}   can be  used    in  the  natural way,  if we deal with more abstract constructions formulated in terms of the semigroup theory   \cite{kukushkin2021a}.  The  central challenge  of the latter  paper  is how  to create a model   representing     a  composition of  fractional differential operators   in terms of the semigroup theory. Here we should note that motivation arouse in connection with the fact that
  a second order differential operator can be presented  as a some kind of  a  transform of   the infinitesimal generator of a shift semigroup and stress that
  the eigenvalue problem for the operator
     was previously  studied by methods of  theory of functions   \cite{firstab_lit:1Nakhushev1977}, \cite{firstab_lit:1Aleroev1994}.
Having been inspired by   novelty of the  idea  we generalize a   differential operator with a fractional integro-differential composition  in the final terms   to some transform of the corresponding  infinitesimal generator of the shift semigroup.
By virtue of the   methods obtained in the paper
\cite{firstab_lit(arXiv non-self)kukushkin2018} we   managed  to  study spectral properties of the  infinitesimal generator  transform and obtained an outstanding result --
   asymptotic equivalence between   the
real component of the resolvent and the resolvent of the   real component of the operator. The relevance is based on the fact that
   the  asymptotic formula  for   the operator  real component  can be  established in most  cases due to well-known asymptotic relations  for the regular differential operators as well as for the singular ones
 \cite{firstab_lit:Rosenblum}. It is remarkable that the results establishing spectral properties of non-selfadjoint operators  allow  us to implement a novel approach to the problem  of the basis property of the root vectors of non-selfadjoint operators.
   In its own turn the application  of results connected with the basis property  covers  many   problems in the framework of the theory of evolution  equations. However,   as a main advantage, we establish an abstract formula for the solution.   Moreover,  the norm-convergence of the series representing the solution allows us to apply the methods of the approximation theory. Thus, we can claim that  the offered approach is undoubtedly novel and relevant. As an application we produce the artificially constructed normal operator. This example  indicates  the relevance and significance of a variant of the natural  lacunae method    allowing  us to formulate the optimal conditions in comparison with the Lidskii    results \cite{firstab_lit:1Lidskii}.

\section{Preliminaries}

Let    $ C,C_{i} ,\;i\in \mathbb{N}_{0}$ be   real constants. We   assume   that  a  value of $C$ is positive and   can be different in   various formulas  but   values of $C_{i} $ are  certain. Denote by $ \mathrm{int} \,M,\;\mathrm{Fr}\,M$ the interior and the set of boundary points of the set $M$ respectively.   Everywhere further, if the contrary is not stated, we consider   linear    densely defined operators acting on a separable complex  Hilbert space $\mathfrak{H}$. Denote by $ \mathcal{B} (\mathfrak{H})$    the set of linear bounded operators   on    $\mathfrak{H}.$  Denote by
    $\tilde{L}$   the  closure of an  operator $L.$ We establish the following agreement on using  symbols $\tilde{L}^{i}:= (\tilde{L})^{i},$ where $i$ is an arbitrary symbol.  Denote by    $    \mathrm{D}   (L),\,   \mathrm{R}   (L),\,\mathrm{N}(L)$      the  {\it domain of definition}, the {\it range},  and the {\it kernel} or {\it null space}  of an  operator $L$ respectively. The deficiency (codimension) of $\mathrm{R}(L),$ dimension of $\mathrm{N}(L)$ are denoted by $\mathrm{def}\, T,\;\mathrm{nul}\,T$ respectively. Assume that $L$ is a closed   operator acting on $\mathfrak{H},\,\mathrm{N}(L)=0,$  let us define a Hilbert space
$
 \mathfrak{H}_{L}:= \big \{f,g\in \mathrm{D}(L),\,(f,g)_{ \mathfrak{H}_{L}}=(Lf,Lg)_{\mathfrak{H} } \big\}.
$
Consider a pair of complex Hilbert spaces $\mathfrak{H},\mathfrak{H}_{+},$ the notation
$
\mathfrak{H}_{+}\subset\subset\mathfrak{ H}
$
   means that $\mathfrak{H}_{+}$ is dense in $\mathfrak{H}$ as a set of    elements and we have a bounded embedding provided by the inequality
$$
\|f\|_{\mathfrak{H}}\leq C_{0}\|f\|_{\mathfrak{H}_{+}},\,C_{0}>0,\;f\in \mathfrak{H}_{+},
$$
moreover   any  bounded  set with respect to the norm $\mathfrak{H}_{+}$ is compact with respect to the norm $\mathfrak{H}.$
  Let $L$ be a closed operator, for any closable operator $S$ such that
$\tilde{S} = L,$ its domain $\mathrm{D} (S)$ will be called a core of $L.$ Denote by $\mathrm{D}_{0}(L)$ a core of a closeable operator $L.$ Let    $\mathrm{P}(L)$ be  the resolvent set of an operator $L$ and
     $ R_{L}(\zeta),\,\zeta\in \mathrm{P}(L),\,[R_{L} :=R_{L}(0)]$ denotes      the resolvent of an  operator $L.$ Denote by $\lambda_{i}(L),\,i\in \mathbb{N} $ the eigenvalues of an operator $L.$
 Suppose $L$ is  a compact operator and  $N:=(L^{\ast}L)^{1/2},\,r(N):={\rm dim}\,  \mathrm{R}  (N);$ then   the eigenvalues of the operator $N$ are called   the {\it singular  numbers} ({\it s-numbers}) of the operator $L$ and are denoted by $s_{i}(L),\,i=1,\,2,...\,,r(N).$ If $r(N)<\infty,$ then we put by definition     $s_{i}=0,\,i=r(N)+1,2,...\,.$
 According  to the terminology of the monograph   \cite{firstab_lit:1Gohberg1965}  the  dimension  of the  root vectors subspace  corresponding  to a certain  eigenvalue $\lambda_{k}$  is called  the {\it algebraic multiplicity} of the eigenvalue $\lambda_{k}.$
Let  $\nu(L)$ denotes   the sum of all algebraic multiplicities of an  operator $L.$ Denote by $n(r)$ a function equals to the quantity of the elements of the sequence $\{a_{n}\}_{1}^{\infty},\,|a_{n}|\uparrow\infty$ within the circle $|z|<r.$ Let $A$ be a compact operator, denote by $n_{A}(r)$   {\it counting function}   a function $n(r)$ corresponding to the sequence  $\{s^{-1}_{i}(A)\}_{1}^{\infty}.$ Let  $\mathfrak{S}_{p}(\mathfrak{H}),\, 0< p<\infty $ be       a Schatten-von Neumann    class and      $\mathfrak{S}_{\infty}(\mathfrak{H})$ be the set of compact operators.
    Suppose  $L$ is  an   operator with a compact resolvent and
$s_{n}(R_{L})\leq   C \,n^{-\mu},\,n\in \mathbb{N},\,0\leq\mu< \infty;$ then
 we
 denote by  $\mu(L) $   order of the     operator $L$ in accordance with  the definition given in the paper  \cite{firstab_lit:Shkalikov A.}.
 Denote by  $ \mathfrak{Re} L  := \left(L+L^{*}\right)/2,\, \mathfrak{Im} L  := \left(L-L^{*}\right)/2 i$
  the  real  and   imaginary components    of an  operator $L$  respectively.
In accordance with  the terminology of the monograph  \cite{firstab_lit:kato1980} the set $\Theta(L):=\{z\in \mathbb{C}: z=(Lf,f)_{\mathfrak{H}},\,f\in  \mathrm{D} (L),\,\|f\|_{\mathfrak{H}}=1\}$ is called the  {\it numerical range}  of an   operator $L.$
  An  operator $L$ is called    {\it sectorial}    if its  numerical range   belongs to a  closed
sector     $\mathfrak{ L}_{\iota}(\theta):=\{\zeta:\,|\arg(\zeta-\iota)|\leq\theta<\pi/2\} ,$ where      $\iota$ is the vertex   and  $ \theta$ is the semi-angle of the sector   $\mathfrak{ L}_{\iota}(\theta).$ If we want to stress the  correspondence  between $\iota$ and $\theta,$  then   we will write $\theta_{\iota}.$
 An operator $L$ is called  {\it bounded from below}   if the following relation  holds  $\mathrm{Re}(Lf,f)_{\mathfrak{H}}\geq \gamma_{L}\|f\|^{2}_{\mathfrak{H}},\,f\in  \mathrm{D} (L),\,\gamma_{L}\in \mathbb{R},$  where $\gamma_{L}$ is called a lower bound of $L.$ An operator $L$ is called  {\it   accretive}   if  $\gamma_{L}=0.$
 An operator $L$ is called  {\it strictly  accretive}   if  $\gamma_{L}>0.$      An  operator $L$ is called    {\it m-accretive}     if the next relation  holds $(A+\zeta)^{-1}\in \mathcal{B}(\mathfrak{H}),\,\|(A+\zeta)^{-1}\| \leq   (\mathrm{Re}\zeta)^{-1},\,\mathrm{Re}\zeta>0. $
An operator $L$ is called    {\it m-sectorial}   if $L$ is   sectorial    and $L+ \beta$ is m-accretive   for some constant $\beta.$   An operator $L$ is called     {\it symmetric}     if one is densely defined and the following  equality  holds $(Lf,g)_{\mathfrak{H}}=(f,Lg)_{\mathfrak{H}},\,f,g\in   \mathrm{D}  (L).$

    Consider a   sesquilinear form   $ t  [\cdot,\cdot]$ (see \cite{firstab_lit:kato1980} )
defined on a linear manifold  of the Hilbert space $\mathfrak{H}.$   Denote by $   t  [\cdot ]$ the  quadratic form corresponding to the sesquilinear form $ t  [\cdot,\cdot].$
Let   $  \mathfrak{h}=( t + t ^{\ast})/2,\, \mathfrak{k}   =( t - t ^{\ast})/2i$
   be a   real  and    imaginary component     of the   form $  t $ respectively, where $ t^{\ast}[u,v]=t \overline{[v,u]},\;\mathrm{D}(t ^{\ast})=\mathrm{D}(t).$ According to these definitions, we have $
 \mathfrak{h}[\cdot]=\mathrm{Re}\,t[\cdot],\,  \mathfrak{k}[\cdot]=\mathrm{Im}\,t[\cdot].$ Denote by $\tilde{t}$ the  closure   of a   form $t.$  The range of a quadratic form
  $ t [f],\,f\in \mathrm{D}(t),\,\|f\|_{\mathfrak{H}}=1$ is called    {\it range} of the sesquilinear form  $t $ and is denoted by $\Theta(t).$
 A  form $t$ is called    {\it sectorial}    if  its    range  belongs to   a sector  having  a vertex $\iota$  situated at the real axis and a semi-angle $0\leq\theta_{\iota}<\pi/2.$   Suppose   $t$ is a closed sectorial form; then  a linear  manifold  $\mathrm{D}_{0}(t) \subset\mathrm{D} (t)$   is
called    {\it core}  of $t,$ if the restriction   of $t$ to   $\mathrm{D}_{0}(t)$ has the   closure
$t$ (see\cite[p.166]{firstab_lit:kato1980}).   Due to Theorem 2.7 \cite[p.323]{firstab_lit:kato1980}  there exist unique    m-sectorial operators  $T_{t},T_{ \mathfrak{h}} $  associated  with   the  closed sectorial   forms $t,  \mathfrak{h}$   respectively.   The operator  $T_{  \mathfrak{h}} $ is called  a {\it real part} of the operator $T_{t}$ and is denoted by  $Re\, T_{t}.$ Suppose  $L$ is a sectorial densely defined operator and $t[u,v]:=(Lu,v)_{\mathfrak{H}},\,\mathrm{D}(t)=\mathrm{D}(L);$  then
 due to   Theorem 1.27 \cite[p.318]{firstab_lit:kato1980}   the corresponding  form $t$ is   closable, due to
   Theorem 2.7 \cite[p.323]{firstab_lit:kato1980} there exists   a unique m-sectorial operator   $T_{\tilde{t}}$   associated  with  the form $\tilde{t}.$  In accordance with the  definition \cite[p.325]{firstab_lit:kato1980} the    operator $T_{\tilde{t}}$ is called     a {\it Friedrichs extension} of the operator $L.$
Everywhere further,   unless  otherwise  stated,  we   use  notations of the papers   \cite{firstab_lit:1Gohberg1965},  \cite{firstab_lit:kato1980},  \cite{firstab_lit:kipriyanov1960}, \cite{firstab_lit:1kipriyanov1960},
\cite{firstab_lit:samko1987}.

\vspace{0.5cm}
\noindent{\bf   Some properties of non-selfadjoint operators }\\

In this section we  explore  a special  operator class for which    a number of  spectral theory theorems can be applied.   As an application of the obtained abstract results  we study a basis property of the root vectors of the operator in terms of the order of the  operator  real part.   By virtue of such an approach we express a convergence exponent of $ {\it s} $-numbers  through the order of the operator  real part.
The theorem given bellow gives us a description of spectral properties of some class of non-selfadjoint operators.
\begin{teo}\label{T1}
Assume that $L$ is a non-sefadjoint operator acting in $\mathfrak{H},$  the following  conditions hold\\

 \noindent  $ (\mathrm{H}1) $ There  exists a Hilbert space $\mathfrak{H}_{+}\subset\subset\mathfrak{ H}$ and a linear manifold $\mathfrak{M}$ that is  dense in  $\mathfrak{H}_{+}.$ The operator $L$ is defined on $\mathfrak{M}.$    \\

 \noindent  $( \mathrm{H2} )  \,\left|(Lf,g)_{\mathfrak{H}}\right|\! \leq \! C_{1}\|f\|_{\mathfrak{H}_{+}}\|g\|_{\mathfrak{H}_{+}},\,
      \, \mathrm{Re}(Lf,f)_{\mathfrak{H}}\!\geq\! C_{2}\|f\|^{2}_{\mathfrak{H}_{+}} ,\,f,g\in  \mathfrak{M},\; C_{1},C_{2}>0.
$
\\

\noindent Let $W$ be a restriction of the operator  $L$ on the set $\mathfrak{M}.$  Then   the following  propositions are true.\\

\noindent$({\bf A})$ {\it We have the following classification
\begin{equation*}
R_{ \tilde{W} }\in  \mathfrak{S}_{p},\,p= \left\{ \begin{aligned}
\!l,\,l>2/\mu,\,\mu\leq1,\\
   1,\,\mu>1    \\
\end{aligned}
 \right.\;,
\end{equation*}
here and further $\mu$ is  the   order of $H:=Re \, \tilde{W} .$
Moreover  under  the  assumptions $ \lambda_{n}(R_{H})\geq  C \,n^{-\mu},\,n\in \mathbb{N},$  we have the following implication
$$
 \left\{R_{  \tilde{W}}\in\mathfrak{S}_{p},\;1\leq p<\infty\right\}\;  \Rightarrow \;\mu p>1.
$$
}\\

\noindent$({\bf B})$ {\it The following relation  holds
 \begin{equation*}
\sum\limits_{i=1}^{n}|\lambda_{i}(R_{ \tilde{W}})|^{p}\leq
  C \sum\limits_{i=1}^{n } \,\lambda^{ p}_{i}(R_{H})  ,\,1\leq p<\infty,\, \;(n=1,2,...,\, \nu(R_{ \tilde{W} })) ,
\end{equation*}
moreover   if  $\nu(R_{ \tilde{W} })=\infty$ and      $\mu \neq0,$  then  the following asymptotic formula  holds
$$
|\lambda_{i}(R_{\tilde{W}})|=  o\left(i^{-\tau}    \right)\!,\,i\rightarrow \infty,\;0<\tau<\mu.
$$}
\noindent $({\bf C})$  {\it Assume that   $\theta< \pi \mu/2\, ,$    where $\theta$ is   the   semi-angle of the     sector $ \mathfrak{L}_{0}(\theta)\supset \Theta (\tilde{W}).$
Then  the system of   root   vectors  of   $R_{ \tilde{W} }$ is complete in $\mathfrak{H}.$}
\end{teo}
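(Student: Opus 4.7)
The plan is to derive all three statements from a single Kato-type factorisation of $\tilde W$ through its real part $H$. Hypotheses $(\mathrm{H}1)$--$(\mathrm{H}2)$ say exactly that the sesquilinear form $t[f,g]=(Lf,g)_{\mathfrak{H}}$ on $\mathfrak{M}$ is bounded and coercive with respect to $\|\cdot\|_{\mathfrak{H}_{+}}$ and, in particular, sectorial with vertex $0$. By the closability and representation theorems recalled in the Preliminaries, $\tilde t$ is a closed sectorial form whose associated m-sectorial operator is precisely the Friedrichs extension $\tilde W$ of $W$; its real part $H=\mathrm{Re}\,\tilde W$ is the positive self-adjoint operator associated with $\mathfrak{h}=\mathrm{Re}\,\tilde t$. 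The coercivity $\mathfrak{h}[f]\geq C_{2}\|f\|^{2}_{\mathfrak{H}_{+}}$ together with the compact embedding $\mathfrak{H}_{+}\subset\subset\mathfrak{H}$ forces $R_{H}$ to be compact, and by definition of $\mu$ we have $\lambda_{n}(R_{H})\leq C n^{-\mu}$.

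Next, I would produce the factorisation
\[
\tilde W=H^{1/2}(I+iB)H^{1/2},\qquad R_{\tilde W}=R_{H}^{1/2}(I+iB)^{-1}R_{H}^{1/2},
\]
with $B$ bounded self-adjoint on $\mathfrak{H}$, obtained from the imaginary form $\mathfrak{k}$ via Kato's second representation theorem applied to $\tilde t$. Sectoriality with semi-angle $\theta_{0}$ yields $\|B\|\leq\tan\theta_{0}$, so $(I+iB)^{-1}$ is everywhere defined and bounded. This identity is the principal engine of the entire proof, and establishing it rigorously on the natural form domain is the main technical obstacle.

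For part $({\bf A})$ apply $s_{n}(AC)\leq\|A\|\,s_{n}(C)$ twice to obtain $s_{n}(R_{\tilde W})\leq C\,s_{n}(R_{H}^{1/2})\leq C'n^{-\mu/2}$, whence $R_{\tilde W}\in\mathfrak{S}_{l}$ for $l>2/\mu$; for $\mu>1$ one has $R_{H}^{1/2}\in\mathfrak{S}_{2}$, so the product is trace class. The converse implication uses
\[
2\,\mathrm{Re}\,R_{\tilde W}=R_{H}^{1/2}\bigl[(I+iB)^{-1}+(I-iB)^{-1}\bigr]R_{H}^{1/2}=2R_{H}^{1/2}(I+B^{2})^{-1}R_{H}^{1/2}\geq\frac{R_{H}}{1+\|B\|^{2}},
\]
so under the assumption $\lambda_{n}(R_{H})\geq C n^{-\mu}$ one obtains $\lambda_{n}(\mathrm{Re}\,R_{\tilde W})\geq c n^{-\mu}$; Ky~Fan's inequality $s_{2n-1}(\mathrm{Re}\,R_{\tilde W})\leq s_{n}(R_{\tilde W})$ then forces $\sum n^{-\mu p}<\infty$, i.e.\ $\mu p>1$. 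For part $({\bf B})$, the cyclic invariance $\lambda_{i}(R_{\tilde W})=\lambda_{i}((I+iB)^{-1}R_{H})$ combined with Weyl's inequality $\sum_{i=1}^{n}|\lambda_{i}(T)|^{p}\leq\sum_{i=1}^{n}s_{i}(T)^{p}$ and $s_{i}((I+iB)^{-1}R_{H})\leq\|(I+iB)^{-1}\|\,\lambda_{i}(R_{H})$ yields the stated $p$-norm bound; the asymptotic $|\lambda_{i}(R_{\tilde W})|=o(i^{-\tau})$ for $\tau<\mu$ follows because a monotone $p$-summable sequence with $p>1/\tau$ necessarily satisfies $i\,|\lambda_{i}|^{p}\to 0$.

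Finally, for $({\bf C})$ the sharper Schatten--H\"older estimate $\|R_{\tilde W}\|_{\mathfrak{S}_{l}}\leq\|R_{H}^{1/2}\|_{\mathfrak{S}_{2l}}^{2}\|(I+iB)^{-1}\|$ shows that $R_{\tilde W}$ has convergence exponent at most $1/\mu$; since $\Theta(\tilde W)$ lies in a sector of half-angle $\theta<\pi\mu/2$, the spectrum of $R_{\tilde W}$ lies in a sector of opening $2\theta<\pi\mu$, which is precisely the Keldysh--Matsaev completeness criterion for a compact operator of order $1/\mu$. The main obstacle is, as already indicated, the factorisation step; secondary care is needed for the lower comparison $\mathrm{Re}\,R_{\tilde W}\geq c\,R_{H}$ in the converse implication of $({\bf A})$ and for matching the exponent $1/\mu$ in part $({\bf C})$ with the coarser Schatten exponent $2/\mu$ appearing in part $({\bf A})$.
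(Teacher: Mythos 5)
Your proposal is correct and follows essentially the same route as the paper's own treatment: Theorem \ref{T1} is in fact stated here without proof, being imported from the author's cited earlier work, and the machinery of that work visible elsewhere in this paper --- the identity $V^{-1}=2H^{1/2}(I+B^{2})H^{1/2}$ for $V=\mathfrak{Re}\,R_{\tilde{W}}$, the equivalence $\lambda_{i}(V)\asymp\lambda_{i}(R_{H})$, and the Weyl-type inequalities from Gohberg--Krein --- is precisely the Kato factorisation $R_{\tilde{W}}=R_{H}^{1/2}(I+iB)^{-1}R_{H}^{1/2}$ on which you build all three parts. Your one substantive addition, the Schatten--H\"older bound $\|R_{\tilde{W}}\|_{\mathfrak{S}_{l}}\leq\|R_{H}^{1/2}\|^{2}_{\mathfrak{S}_{2l}}\|(I+iB)^{-1}\|$ giving membership for every $l>1/\mu$, is exactly what is needed to make part $({\bf C})$ work with the stated angle $\theta<\pi\mu/2$, since the cruder estimate $s_{n}(R_{\tilde{W}})\leq C\,s_{n}(R_{H}^{1/2})$ underlying the $2/\mu$ threshold of part $({\bf A})$ would only yield completeness for $\theta<\pi\mu/4$.
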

  Throughout the paper we  formulate results in terms of the  restriction   $W$  on the set $\mathfrak{M}$ of the operator  $L$ satisfying  the Theorem \ref{T1} conditions.      We also    use  the  short-hand  notations $A:=R_{\tilde{W}},\,\mu:=\mu(H),$ where $H:=Re \tilde{W}.$\\

\vspace{0.5cm}

\noindent{\bf Some facts of the entire functions theory}\\

Here we introduce some notions and facts of the entire functions theory, we follow the monograph \cite{firstab_lit:Eb. Levin}. In this subsection, we   use the following notations
$$
G(z,p):=(1-z)e^{z+\frac{z^{2}}{2}+...+\frac{z^{p}}{p}},\,G(z,0):=(1-z).
$$
Consider such an entire function  that its zeros satisfy the following relation for some   $\lambda>0$
\begin{equation}\label{1}
 \sum\limits_{n=1}^{\infty}\frac{1}{|a_{n}|^{\lambda}}<\infty.
\end{equation}
In this case we denote by $p$ the smallest integer number for which the following condition holds
\begin{equation}\label{2}
\sum\limits_{n=1}^{\infty}\frac{1}{|a_{n}|^{p+1}}<\infty .
\end{equation}
 It is clear that $0\leq p<\lambda.$ It is proved that under the assumption \eqref{1}  the   infinite product
 \begin{equation}\label{3}
 \prod\limits_{n=1}^{\infty}(z): =\prod\limits_{n=1}^{\infty} G\left(\frac{z}{a_{n}},p\right)
\end{equation}
 is uniformly convergent, we will call it a canonical product and call $p$ the genus of the canonical product.
By the   {\it convergence exponent} $\rho$ of the sequence
$
\{a_{n}\}_{1}^{\infty}\subset \mathbb{C},\,a_{n}\neq 0,\,a_{n}\rightarrow \infty
$
 we mean the greatest lower bound for numbers $\lambda$ for which   series \eqref{1} converges.
 Note that if $\lambda$ equals to a convergence  exponent then series \eqref{1} may or  may not be convergent. For instance, the sequences $a_{n}= 1/n^{\lambda}$ and $1/(n\ln^{2} n)^{\lambda}$ have the same convergence exponent $\lambda=1,$ but in the first case the series \eqref{1} is divergent when $\lambda=1$ while in the second one it is convergent. In this paper, we have a special interest regarding the first case. Consider the following obvious relation between the convergence exponent $\rho$ and the genus $p$ of the corresponding canonical product $p \leq\rho\leq p+1.$ It is clear that if $\rho=p,$   then    the series  \eqref{1} diverges for $\lambda=\rho,$ while $\rho=p+1$ means that the series converges  (in accordance with the definition of $p$). In the monograph \cite{firstab_lit:Eb. Levin}, it is considered  a more precise characteristic of the density of the sequence $\{a_{n}\}_{1}^{\infty}$ than the convergence exponent. Thus, there is defined  a so-called counting function $n(r)$ equals to a number of points of the sequence in the circle $|z|<r.$ By upper density of the sequence, we call a number
 $$
 \Delta=\overline{\lim\limits_{r\rightarrow\infty}} n(t)/t^{\rho}.
 $$
 If a limit exists in the ordinary sense (not in the  sense of the upper limit),  then $\Delta$ is called the density. Note that it is proved in Lemma 1
  \cite{firstab_lit:Eb. Levin} that
 $$
   \lim\limits_{r\rightarrow\infty}  n(t)/t^{\rho+\varepsilon}\rightarrow 0,\,\varepsilon>0.
 $$
 We need the following fact (see \cite{firstab_lit:Eb. Levin} Lemma 3).
 \begin{lem}\label{L1}
 If the series \eqref{2} converges, then the corresponding infinite product \eqref{3}  satisfies  the following inequality in the entire complex plane
 $$
 \ln\left|  \prod\limits_{n=1}^{\infty}(z)\right|\leq Cr^{p}\left(\int\limits_{0}^{r}\frac{n(t)}{t^{p+1}}dt+r\int\limits_{r}^{\infty}\frac{n(t)}{t^{p+2}}dt\right),\,r:=|z|.
 $$
\end{lem}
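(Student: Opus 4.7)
The strategy is to reduce the statement to the classical two-regime estimate for the Weierstrass elementary factor $G(w,p)$ and then to repackage the resulting sum over the zeros via the counting function $n(t)$. Setting $r=|z|$, the first step is to decompose
$$\ln\Bigl|\prod_{n=1}^{\infty}(z)\Bigr| = \sum_{|a_n|\leq 2r}\ln|G(z/a_n,p)| + \sum_{|a_n|>2r}\ln|G(z/a_n,p)|,$$
so that on the near sum one has $|z/a_n|\geq 1/2$ and on the far sum $|z/a_n|<1/2$; these two regimes are exactly the ones in which the elementary factor admits simple power-type majorants.

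The key inputs are the standard pointwise bounds (see \cite{firstab_lit:Eb. Levin}): there exist constants $A,B$ depending only on $p$ such that $\ln|G(w,p)|\leq A|w|^{p+1}$ for $|w|\leq 1/2$ and $\ln|G(w,p)|\leq B|w|^p$ for $|w|\geq 1/2$. The first follows from the series $\ln G(w,p) = -\sum_{k>p}w^k/k$, whose leading term is of order $w^{p+1}$; the second from the crude estimates $|1-w|\leq 2|w|$ and $|w+\cdots+w^p/p|\leq p|w|^p$ for $|w|\geq 1$, with the intermediate range handled by compactness. Substituting these into the splitting yields
$$\ln\Bigl|\prod_{n=1}^{\infty}(z)\Bigr|\leq B\, r^p\sum_{|a_n|\leq 2r}|a_n|^{-p} + A\, r^{p+1}\sum_{|a_n|>2r}|a_n|^{-p-1}.$$

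The remaining task is to convert the two discrete sums into integrals against $n(t)$. Writing them as Stieltjes integrals and integrating by parts gives
$$\sum_{|a_n|\leq 2r}|a_n|^{-p} = \frac{n(2r)}{(2r)^p} + p\int_0^{2r}\frac{n(t)}{t^{p+1}}\,dt, \quad \sum_{|a_n|>2r}|a_n|^{-p-1}\leq (p+1)\int_{2r}^{\infty}\frac{n(t)}{t^{p+2}}\,dt,$$
where the vanishing of the upper boundary term in the second identity is forced by convergence of \eqref{2}, which yields $n(T)/T^{p+1}\to 0$ through the elementary estimate $n(T)/T^{p+1}\leq (p+1)\int_T^{\infty}n(t)t^{-p-2}\,dt$. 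The main --- though routine --- obstacle is to absorb the boundary term $n(2r)/(2r)^p$ and to replace the cutoff $2r$ by $r$ in the integration limits. For the former, the monotonicity bound $r\int_{2r}^{\infty}n(t)t^{-p-2}\,dt\geq c\,n(2r)/r^p$ gives $n(2r)/(2r)^p\leq C r\int_r^{\infty}n(t)t^{-p-2}\,dt$; for the latter, the excess $\int_r^{2r}n(t)t^{-p-1}\,dt$ is dominated by $2r\int_r^{\infty}n(t)t^{-p-2}\,dt$ since $t\geq r$ on that interval. Collecting these contributions yields the asserted inequality with a constant $C$ depending only on $p$.
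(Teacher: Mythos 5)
The paper offers no proof of Lemma \ref{L1}: it is quoted as Lemma 3 of Levin's monograph \cite{firstab_lit:Eb. Levin}, so the only benchmark is the classical argument, which is exactly the one you reconstruct --- split the zeros at $|a_n|=2r$, bound the elementary factor by $|w|^{p+1}$ near the origin and by $|w|^{p}$ away from it, and convert the two sums into integrals against $n(t)$ by Stieltjes integration by parts. Your bookkeeping is correct: the vanishing of $n(T)/T^{p+1}$ does follow from convergence of \eqref{2} via the tail estimate you give, the boundary term $n(2r)/(2r)^{p}$ is legitimately absorbed into $r\int_{r}^{\infty}n(t)t^{-p-2}dt$, and the passage from the cutoff $2r$ back to $r$ costs only a constant.

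The one genuine flaw is the second elementary-factor estimate in the case $p=0$. For $p\geq 1$ the bound $\ln|G(w,p)|\leq B|w|^{p}$ on $|w|\geq 1/2$ is fine, since $\ln(1+|w|)\leq |w|\leq 2^{p-1}|w|^{p}$ there; but for $p=0$ it asserts $\ln|1-w|\leq B$, which is false (take $w=-R$ with $R\rightarrow\infty$). This is not a vacuous corner of the parameter range: in the paper's downstream use of the lemma (Lemma \ref{L5} and Theorems \ref{T2}--\ref{T4}) the canonical product actually estimated is $\prod\left(1+|\lambda|\,s_{i}(B^{m+1})\right)$ with $\rho/(m+1)<1$, i.e.\ precisely a genus-zero product, and the stated $\beta(r)$ there carries the weight $t^{-(p+1)}=t^{-1}$ of the case $p=0$. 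The repair is standard and stays entirely within your framework: for $|w|\geq 1/2$ use $\ln|1-w|\leq\ln(3|w|)$, so the near sum becomes $\sum_{|a_n|\leq 2r}\ln(3r/|a_n|)=n(2r)\ln(3/2)+\int_{0}^{2r}n(t)t^{-1}dt$ after integration by parts, and the extra multiple of $n(2r)$ is absorbed exactly as you already absorb $n(2r)/(2r)^{p}$. With that correction the proof is complete for all $p\geq 0$.
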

Using this result, it is not hard to prove a relevant  fact mentioned in the monograph \cite{firstab_lit:Eb. Levin}. Since it has a principal role in the further narrative, then  we formulate it as a lemma  in terms of the   density.
\begin{lem}\label{L2}
Assume that the following series is convergent for some values $\lambda>0$ i.e.
$$
\sum\limits_{n=1}^{\infty}\frac{1}{|a_{n}|^{\lambda}}<\infty.
$$
Then  the following  relation holds
\begin{equation}\label{4}
\left|\prod\limits_{n=1}^{\infty} (z)\right|\leq e^{\beta(r)r^{\rho_{1}}},\,\beta(r)= r^{p-\rho_{1} }\left(\int\limits_{0}^{r}\frac{n(t)}{t^{p+1 }}dt+
r\int\limits_{r}^{\infty}\frac{n(t)}{t^{p+2  }}dt\right).
\end{equation}
  In the case $\rho_{1}=\rho,$ we have   $\beta(r)\rightarrow 0,$ if at list one of the following conditions holds:  the convergence exponent $\rho<\lambda$  is   non-integer       and the density equals zero,  the convergence exponent $\rho=\lambda$  is   arbitrary.    In addition,   the  equality  $\rho=\lambda$ guaranties that the density equals zero. In the case $\rho_{1}>\rho,$ we claim that $\beta(r)\rightarrow 0,$ without any additional conditions.
\end{lem}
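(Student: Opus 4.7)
The plan is to first derive the inequality \eqref{4} as a direct algebraic reformulation of Lemma \ref{L1}: one writes $Cr^{p}(\cdots)=r^{\rho_{1}}\cdot r^{p-\rho_{1}}(\cdots)$, so that exponentiating Lemma \ref{L1} gives exactly $\bigl|\prod_{n=1}^{\infty}(z)\bigr|\le e^{\beta(r)r^{\rho_{1}}}$ with $\beta(r)$ matching the stated formula. The real content is then the analysis of $\beta(r)$ as $r\to\infty$ in the listed regimes.

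For the case $\rho_{1}>\rho$ I would invoke the cited Levin fact $n(t)=o(t^{\rho+\varepsilon})$ for every $\varepsilon>0$. Splitting each of the two integrals into a piece over $(0,T)$ (which is $O(1)$) and over $(T,r)$ or $(r,\infty)$, and using $p\le\rho<p+1$, both integrands $t^{\rho+\varepsilon-p-1}$ and $t^{\rho+\varepsilon-p-2}$ are honest power functions integrable on their respective ranges once $\varepsilon$ is small. A direct computation yields $\int_{0}^{r}n(t)t^{-p-1}\,dt+r\int_{r}^{\infty}n(t)t^{-p-2}\,dt=O(r^{\rho+\varepsilon-p})$, whence $\beta(r)=O(r^{\rho+\varepsilon-\rho_{1}})$; picking $\varepsilon<\rho_{1}-\rho$ gives $\beta(r)\to 0$ unconditionally.

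For the case $\rho_{1}=\rho$ with $\rho$ non-integer, $\rho<\lambda$, and density zero, the stronger statement $n(t)/t^{\rho}\to 0$ replaces the Levin bound. Given $\varepsilon>0$, pick $T$ with $n(t)\le\varepsilon t^{\rho}$ for $t\ge T$; since $\rho$ is non-integer we have $p<\rho<p+1$, so $\int_{T}^{r}t^{\rho-p-1}\,dt=O(r^{\rho-p})$ and $r\int_{r}^{\infty}t^{\rho-p-2}\,dt=O(r^{\rho-p})$. Multiplying by $r^{p-\rho}$ yields $\beta(r)\le O(r^{p-\rho})+C\varepsilon$, so $\limsup\beta(r)\le C\varepsilon$ and the arbitrariness of $\varepsilon$ closes this subcase. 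For $\rho_{1}=\rho=\lambda$ I would first establish the auxiliary claim that the density is automatically zero: if $\Delta>0$, then $n(t)\ge(\Delta/2)t^{\rho}$ for large $t$, and the identity $\sum|a_{n}|^{-\lambda}=\lambda\int_{0}^{\infty}n(t)t^{-\lambda-1}\,dt$ forces the divergent tail $\int^{\infty}t^{-1}\,dt$, contradicting $\sum|a_{n}|^{-\lambda}<\infty$. With $\Delta=0$ in hand, essentially the same splitting argument works, and in the effective integer subcase $\rho=p+1$ the convergence of the series supplies the tail control $\int_{r}^{\infty}n(t)t^{-\rho-1}\,dt\to 0$ directly, while the first integral becomes $\int_{0}^{r}n(t)t^{-\rho}\,dt=o(r)$ from $n(t)/t^{\rho}\to 0$ by an elementary estimate.

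The main obstacle is the careful handling of the integer vs.\ non-integer alternative for $\rho$ together with the precise use of the relation $p\le\rho\le p+1$ and the minimality of $p$. In the non-integer subcase the inequality $p<\rho<p+1$ is strict on both sides, which keeps the power-type integrals honest; at the boundary $\rho=p+1=\lambda$ one must switch from a pure power estimate to the integral form of the convergence hypothesis to get the tail $\int_{r}^{\infty}n(t)t^{-\rho-1}\,dt\to 0$, while noting that $\rho=p$ with $\lambda=\rho$ is excluded by the minimality definition of $p$ (except in the trivial finite-sequence case). Everything else reduces to bookkeeping based on the explicit form of $\beta(r)$.
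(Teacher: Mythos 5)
Your overall route is sound and runs parallel to the paper's: inequality \eqref{4} is indeed just Lemma \ref{L1} rewritten, and where the paper evaluates the limits of $r^{p-\rho}\int_0^r n(t)t^{-p-1}\,dt$ and $r^{p+1-\rho}\int_r^\infty n(t)t^{-p-2}\,dt$ by L'H\^{o}pital's rule, you obtain the same conclusions by splitting the integrals and estimating directly with $n(t)\le\varepsilon t^{\rho}$ (resp.\ $n(t)=o(t^{\rho+\varepsilon})$ for $\rho_{1}>\rho$); both versions hinge on the same dichotomy $p<\rho<p+1$ versus $\rho=p+1$, and your handling of the boundary case via convergence of $\int^{\infty}n(t)t^{-\rho-1}\,dt$ is exactly the paper's.

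The one step that fails as written is your derivation that $\rho=\lambda$ forces the density to vanish. From $\Delta=\overline{\lim}\,n(t)/t^{\rho}>0$ you cannot conclude $n(t)\ge(\Delta/2)t^{\rho}$ for all large $t$: the upper limit only supplies a sequence $t_{k}\to\infty$ with $n(t_{k})\ge(\Delta/2)t_{k}^{\rho}$, and a counting function can drop far below $\Delta t^{\rho}/2$ between such points (take $n(t)=t_{k}^{\rho}$ on $[t_{k},t_{k}^{2})$). The conclusion is still reachable: either repair your contradiction by using monotonicity of $n$ on the intervals $[t_{k},2t_{k}]$, each of which then contributes at least a fixed positive amount to $\int n(t)t^{-\lambda-1}\,dt$, or, more simply, argue as the paper does: once $\int^{\infty}n(t)t^{-\rho-1}\,dt<\infty$ is extracted from the convergent Stieltjes integral, monotonicity gives $n(r)r^{-\rho}=\rho\, n(r)\int_{r}^{\infty}t^{-\rho-1}\,dt\le\rho\int_{r}^{\infty}n(t)t^{-\rho-1}\,dt\rightarrow0$ directly. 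A second, smaller bookkeeping point: in the case $\rho_{1}>\rho$ your pure power bound $n(t)=o(t^{\rho+\varepsilon})$ makes the tail integrand $t^{\rho+\varepsilon-p-2}$ non-integrable when $\rho=p+1$; there too you must fall back on the convergence of $\sum|a_{n}|^{-(p+1)}$ (guaranteed by the definition of $p$) rather than on a power estimate, as you already do in the $\rho=\lambda$ subcase.
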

\begin{proof}
Applying  Lemma \ref{L1}, we establish relation \eqref{4}. Consider a case   when  $\rho<\lambda$ is non-integer.
Taking into account the fact that the density equals  zero, using   L'H\^{o}pital's rule,    we easily obtain
\begin{equation}\label{5}
 r^{p-\rho}  \int\limits_{0}^{r}\frac{n(t)}{t^{p+1}}dt \rightarrow 0 ;\;r^{p+1-\rho } \int\limits_{r}^{\infty}\frac{n(t) }{t^{p+2}}dt \rightarrow 0,
\end{equation}
(here we should remark  that if $\rho$ is integer, then $p=\rho$).
Therefore  $\beta(r)\rightarrow 0.$
Consider the case when  $\rho=\lambda,$ then let us rewrite the series \eqref{1} in the form of the Stiltes integral
$$
\sum\limits_{n=1}^{\infty}\frac{1}{|a_{n}|^{\lambda}}=\int\limits_{0}^{\infty}\frac{d n(t)}{t^{\rho}} .
$$
Using integration by parts formulae, we get
$$
\int\limits_{0}^{r}\frac{d n(t)}{t^{\rho}} =\frac{n(r)}{r^{\rho}} -\frac{n(\gamma)}{C^{\rho}}+\rho\int\limits_{0}^{r}\frac{n(t)}{t^{\rho+1}}dt.
$$
Here,  we should note that there exists a neighborhood of the point zero in which $n(t)=0.$
The latter representation shows us that the following  integral converges i.e.
$$
\int\limits_{0}^{\infty}\frac{n(t)}{t^{\rho+1}}dt<\infty.
$$
In its own turn, it follows that
$$
\frac{n(r)}{r^{\rho}}=n(r)\rho\int\limits_{r}^{\infty}\frac{1}{t^{\rho+1}}dt<\rho\int\limits_{r}^{\infty}\frac{n(t)}{t^{\rho+1}}dt\rightarrow 0,\,r\rightarrow \infty.
$$
 Using this fact, analogously to the above applying L'H\^{o}pital's rule, we conclude  that  \eqref{5} holds if $\rho=\lambda$ is non-integer. If   $\rho=\lambda$ is  integer  then it is clear that  we have $\rho=p+1,$ here we should remind  that it is not possible to assume $\rho=p$ due to the definition of $p.$ In the case $\rho=p+1,$    using the above  reasonings, we get
$$
 r^{-1}  \int\limits_{0}^{r}\frac{n(t)}{t^{p+1}}dt \rightarrow 0 ;\; \int\limits_{r}^{\infty}\frac{n(t) }{t^{p+2}}dt \rightarrow 0,
$$
from what follows  the fact that $\beta(r)\rightarrow0$. The reasonings related to the  case $\rho_{1}>\rho$ is absolutely analogous,     we  left  the proof to  the reader. The proof is complete.
\end{proof}
\begin{lem} \label{L3} We claim that the following implication holds
$$
\ln r\frac{n(r)}{r^{\rho_{1}}}\rightarrow0,\,\Longrightarrow \beta(r)\ln r \rightarrow 0,\,
$$
where
$$
\beta(r)= r^{p-\rho_{1} }\left(\int\limits_{0}^{r}\frac{n(t)}{t^{p+1 }}dt+
r\int\limits_{r}^{\infty}\frac{n(t)}{t^{p+2  }}dt\right),\,\rho_{1}\neq p,\,\rho_{1}\neq p+1.
$$
\end{lem}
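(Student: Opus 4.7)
My plan is to split $\beta(r)\ln r$ into two summands and show each tends to zero, closely paralleling the structure of the proof of Lemma \ref{L2} but with an extra logarithmic factor that is absorbed by the strengthened hypothesis $\ln r\cdot n(r)/r^{\rho_1}\to 0$. Write
\begin{equation*}
\beta(r)\ln r=A(r)+B(r),\quad A(r):=\frac{\ln r}{r^{\rho_{1}-p}}\int\limits_{0}^{r}\frac{n(t)}{t^{p+1}}\,dt,\quad B(r):=\frac{r\ln r}{r^{\rho_{1}-p}}\int\limits_{r}^{\infty}\frac{n(t)}{t^{p+2}}\,dt.
\end{equation*}
From the context of Lemma \ref{L2} we have $\rho_{1}\geq\rho\geq p$, and the hypotheses $\rho_{1}\neq p,\,\rho_{1}\neq p+1$ of Lemma \ref{L3} force either $p<\rho_{1}<p+1$ or $\rho_{1}>p+1$. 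It is this dichotomy that governs the case analysis below.

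For $A(r)$, since $\rho_{1}-p>0$ the denominator $r^{\rho_{1}-p}$ tends to infinity, so L'H\^{o}pital's rule gives
\begin{equation*}
\lim_{r\to\infty}A(r)=\lim_{r\to\infty}\frac{1}{\rho_{1}-p}\biggl(\frac{1}{r^{\rho_{1}-p}}\int\limits_{0}^{r}\frac{n(t)}{t^{p+1}}\,dt+\frac{\ln r\cdot n(r)}{r^{\rho_{1}}}\biggr).
\end{equation*}
The second summand tends to $0$ directly by hypothesis. The first summand, by one further application of L'H\^{o}pital, reduces to $n(r)/[(\rho_{1}-p)r^{\rho_{1}}]$, which also vanishes as a weakening of the hypothesis. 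Hence $A(r)\to 0$.

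For $B(r)$ I expect the case analysis to be the main obstacle. If $\rho_{1}>p+1$, then $r^{p+1-\rho_{1}}\ln r\to 0$, while $\int_{r}^{\infty}n(t)/t^{p+2}\,dt$ converges (it is finite by the definition of $p$, as used already in Lemma \ref{L2}) and in fact tends to $0$; hence $B(r)\to 0$ immediately. If instead $p<\rho_{1}<p+1$, I will argue directly with the hypothesis: given $\varepsilon>0$, pick $r_{0}$ so that $n(t)\leq\varepsilon\,t^{\rho_{1}}/\ln t$ for $t\geq r_{0}$; then for $r\geq r_{0}$,
\begin{equation*}
\int\limits_{r}^{\infty}\frac{n(t)}{t^{p+2}}\,dt\leq\varepsilon\int\limits_{r}^{\infty}\frac{t^{\rho_{1}-p-2}}{\ln t}\,dt\leq\frac{\varepsilon}{\ln r}\int\limits_{r}^{\infty}t^{\rho_{1}-p-2}\,dt=\frac{\varepsilon\,r^{\rho_{1}-p-1}}{(p+1-\rho_{1})\ln r},
\end{equation*}
and multiplication by $r^{p+1-\rho_{1}}\ln r$ produces $B(r)\leq\varepsilon/(p+1-\rho_{1})$. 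Since $\varepsilon$ is arbitrary, $B(r)\to 0$. The main obstacle is exactly this bookkeeping: the two exclusions $\rho_{1}\neq p$ and $\rho_{1}\neq p+1$ are needed precisely to prevent the vanishing of the factors $\rho_{1}-p$ and $p+1-\rho_{1}$ that appear as denominators in the two halves of the argument.
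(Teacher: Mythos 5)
Your proof is correct and follows essentially the same route as the paper: the same splitting of $\beta(r)\ln r$ into the two integral summands, L'H\^{o}pital's rule driven by the hypothesis $\ln r\cdot n(r)/r^{\rho_{1}}\rightarrow 0$, and reduction of the leftover terms to that same hypothesis. Your handling of the tail integral by a direct estimate with the case split at $\rho_{1}=p+1$ replaces the paper's second application of L'H\^{o}pital there and is, if anything, more careful about the $0/0$ form that arises when $p<\rho_{1}<p+1$.
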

\begin{proof}
To prove the fact   $\beta(r)\ln r\rightarrow 0,$  we should consider   representation \eqref{4}, we have
$$
\beta(r)\ln r=  r^{p-\rho }\left(\int\limits_{0}^{r}\frac{n(t)}{t^{p+1 }}dt+
r\int\limits_{r}^{\infty}\frac{n(t)}{t^{p+2  }}dt\right) \ln r.
$$
Let us define the following auxiliary functions
$$
u_{1}(r):=\ln r\int\limits_{0}^{r}\frac{n(t)}{t^{p+1 }}dt,\,u_{2}(r):=r^{\rho-p} ,\,v_{1}(r):=\ln r \int\limits_{r}^{\infty}\frac{n(t)}{t^{p+2  }}dt,\,v_{2}(r):=r^{\rho-p-1}.
$$
It is clear that
$$
u_{1}'(r):=\frac{1}{r} \int\limits_{0}^{r}\frac{n(t)}{t^{p+1 }}dt +\ln r \frac{n(r)}{r^{p+1 }};\;
v'_{1}(r):=\frac{1}{r} \int\limits_{r}^{\infty}\frac{n(t)}{t^{p+2  }}dt+ \ln r\frac{n(r)}{r^{p+2  }}.
$$
Therefore
\begin{equation}\label{4d}
\frac{u_{1}'(r)}{u_{2}'(r)}=Cr^{p-\rho} \int\limits_{0}^{r}\frac{n(t)}{t^{p+1 }}dt +C\ln r \frac{n(r)}{r^{\rho}};\;\frac{v_{1}'(r)}{v_{2}'(r)}=Cr^{p+1-\rho} \int\limits_{r}^{\infty}\frac{n(t)}{t^{p+2 }}dt +C\ln r \frac{n(r)}{r^{\rho}}.
\end{equation}
Notice that
$\beta(r)\ln r=  u_{1} (r)/u_{2} (r)+v_{1} (r)/v_{2} (r),$
  applying    L'H\^{o}pital's rule, we have
\begin{equation}\label{5d}
  \beta(r)\ln r\sim  \frac{u_{1}'(r)}{u_{2}'(r)}+ \frac{v_{1}'(r)}{v_{2}'(r)},\,r\rightarrow\infty.
\end{equation}
In an analogous way, we obtain the following implication
\begin{equation}\label{6d}
\frac{n(r)}{r^{\rho}} \rightarrow 0,\;\Longrightarrow\left\{r^{p-\rho} \int\limits_{0}^{r}\frac{n(t)}{t^{p+1 }}dt\rightarrow 0;\;r^{p+1-\rho} \int\limits_{r}^{\infty}\frac{n(t)}{t^{p+2 }}dt\rightarrow0 \right\}.
\end{equation}
Thus, taking into account the premise
$
\ln r \cdot  n(r)/r^{\rho} \rightarrow0,
$ combining \eqref{4d}, \eqref{5d}, \eqref{6d},
we obtain the desired result.
\end{proof}
Regarding Lemma \ref{L3}, we can produce the following example that indicates the relevance of the issue itself.

\begin{ex}\label{E1}
  There exists a sequence $\{a_{n}\}_{1}^{\infty}$ such that  density equals zero, moreover
$$
 \beta(r)\ln r\rightarrow 0,\;\sum\limits_{n=1}^{\infty}\frac{1}{|a_{n}|^{\rho}}=\infty.
$$
We can construct the required sequence supposing $n(r)\sim r^{\rho}(\ln r \cdot \ln\ln r )^{-1},\,\rho>0.
$
It  follows from the latter relation  directly   that the density equals zero.
It is clear that we can represent partial sums of series \eqref{1} due to the Stiltes integral
$$
 \sum\limits_{n=1}^{k}\frac{1}{|a_{n}|^{\lambda}}= \int\limits_{0}^{r(k) }\frac{d n(t)}{t^{\lambda}},\,\lambda\geq\rho.
$$
Thus  the sequence $\{a_{n}\}_{1}^{\infty}$ is defined by the function $n(r).$
Applying the integration by parts formulae, we get
$$
 \int\limits_{0}^{r }\frac{d n(t)}{t^{\lambda}}  = \frac{n(r)}{r^{\lambda}} - \frac{1}{a^{\lambda}_{1}} +\lambda\int\limits_{0}^{r}\frac{n(t)}{t^{\lambda+1}}dt .
$$
Using the latter relation,  we can easily establish the fact that the density equals zero while the last integral is divergent when $\lambda=\rho ,\;r\rightarrow\infty,$ we have
$$
 \int\limits_{0}^{r}\frac{n(t)}{t^{\rho +1}}dt\geq C \int\limits_{0}^{r}\frac{ dt}{t\ln t\cdot \ln\ln t }= \ln\ln\ln r-C.
$$
On the other hand, we have
$$
 \int\limits_{0}^{\infty}\frac{n(t)}{t^{\lambda+1}}dt=  \int\limits_{0}^{\infty}\frac{ dt}{t^{1+\lambda-\rho}\ln t\cdot \ln\ln t }< \infty ,\; \lambda>\rho.
$$
Therefore,  the series \eqref{1} is divergent if $\lambda=\rho$ and   convergent if $\lambda=\rho+\varepsilon,\,\varepsilon>0.$ Thus,   the denotation  $\rho$ is justified.   We should explain that in these reasonings    $\rho$ has two meanings: a power and a  convergence exponent,  we have established the identity of them.
Let us  prove the fact   $\beta(r)\ln r\rightarrow 0,$  for this purpose in accordance with Lemma \ref{L3},
  it suffices to show that
$$
\ln r \frac{n(r)}{r^{\rho}}\rightarrow0,\;r\rightarrow\infty.
$$
Substituting $r^{\rho}(\ln r \cdot \ln\ln r )^{-1}$  instead of $n(r),$ we get
$$
\ln r \frac{n(r)}{r^{\rho}}=   \frac{1 }{  \ln\ln r }\rightarrow0,\;r\rightarrow\infty.
$$
Thus, we have established  the fulfilment of the made claims.
\end{ex}

\noindent{\bf Schatten-von Neumann  class and   the particular case corresponding to the normal operator}\\

 Let  $\mathfrak{S}_{q}(\mathfrak{H}),\, 0< q<\infty $ be       a Schatten-von Neumann    class and      $\mathfrak{S}_{\infty}(\mathfrak{H})$ be the set of compact operators. By definition, put
$$
\mathfrak{S}_{q}(\mathfrak{H}):=\left\{ T: \mathfrak{H}\rightarrow \mathfrak{H},  \sum\limits_{i=1}^{\infty}s^{q}_{i}(L)<\infty,\;0< q<\infty \right\}.
$$
  Denote by $\tilde{\mathfrak{S}}_{\rho}(\mathfrak{H})$ the class of the operators such that
$$
 \tilde{\mathfrak{S}}_{\rho}(\mathfrak{H}):=\{T\in\mathfrak{S}_{\rho+\varepsilon},\,T\, \overline{\in} \,\mathfrak{S}_{\rho-\varepsilon},\,\forall\varepsilon>0 \}.
$$
This operator class we will call      {\it Schatten-von Neumann    class  of the convergence exponent.}
Note  that there exists a one to one correspondence between selfadjoint compact operators and monotonically decreasing sequences.  If we consider example \ref{E1},   then we see that the made definition becomes relevant  in this regard.

\begin{lem}\label{L4}
Assume that
 $$(\ln^{\kappa+1}x)'_{\lambda_{i}(H)}  =o(i^{-\kappa})    ,\;\kappa\in(0,1].$$
 Then in the general case, we get
$$
A\in  \tilde{\mathfrak{S}}_{\rho} ,\;\rho\in[0,2/\kappa] ,\; n_{A}(r)=o( r^{ 2/\kappa}/ \ln r).
$$
In the particular case, when $\tilde{W}$ is normal,  we get
$$
A\in \tilde{\mathfrak{S}}_{\rho} ,\;\rho\in[0,1/\kappa],\; n_{A}(r)=o( r^{1/\kappa}/\ln r).
$$
Moreover, the additional assumption
 $  \lambda_{i}(H) =O(i^{\kappa+\varepsilon}),\,\forall\varepsilon>0$ gives us  the estimate    $\rho\geq 1/\kappa$  in both cases, thus in the case when $\tilde{W}$ is normal, we get
 $$
 A\in  \tilde{\mathfrak{S}}_{1/\kappa}.
 $$
\end{lem}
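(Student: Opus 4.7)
The plan is to convert the pointwise assumption into an asymptotic bound on the counting function $n_H(s):=\#\{i:\lambda_i(H)\le s\}$ and then propagate that bound to $n_A$ by the singular-value inequalities inherited from the proof of Theorem~\ref{T1}. Computing the derivative, the hypothesis reads
$$
(\kappa+1)\frac{\ln^{\kappa}\lambda_i(H)}{\lambda_i(H)}=o(i^{-\kappa}),
\qquad\text{equivalently}\qquad
\frac{\lambda_i(H)^{1/\kappa}}{i\,\ln\lambda_i(H)}\longrightarrow\infty,\;\;i\to\infty.
$$

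Observe that $g(x):=x^{1/\kappa}/\ln x$ is strictly increasing for $x>e^{\kappa}$. Setting $i:=n_H(s)$, the inequality $\lambda_i(H)\le s$ combined with the monotonicity of $g$ gives $g(\lambda_i(H))\le g(s)$, and in view of $i=o(g(\lambda_i(H)))$ we obtain
$$
n_H(s)=o\!\left(\frac{s^{1/\kappa}}{\ln s}\right),\qquad s\to\infty.
$$
I would then transfer the estimate to $A$ using the singular-value comparisons supplied by the proof of Theorem~\ref{T1}(A): in the general case a Ky--Fan type inequality of the form $s_n(A)\le C/\sqrt{\lambda_n(H)}$ (which is what produces the factor $2/\mu$ in that theorem), and in the normal case the sharper $s_n(A)\le 1/\lambda_n(H)$ obtained from the simultaneous diagonalization of $\mathfrak{Re}\,\tilde W$ and $\mathfrak{Im}\,\tilde W$. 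Inverting, $n_A(r)\le n_H(C^{2}r^{2})$ in the general setting and $n_A(r)\le n_H(r)$ in the normal one, and substitution of the previous display yields precisely the claimed $o(r^{2/\kappa}/\ln r)$ and $o(r^{1/\kappa}/\ln r)$ bounds.

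The Schatten-class assertions then follow from the Stieltjes representation
$$
\sum_{n=1}^{\infty}s_n^{p}(A)=p\int_{0}^{\infty}\frac{n_A(r)}{r^{p+1}}\,dr.
$$
Under the counting-function bound the integrand is $o(r^{-(p+1-2/\kappa)}/\ln r)$ in the general case, summable at infinity exactly when $p>2/\kappa$; the normal case is identical with exponent $1/\kappa$. Therefore the convergence exponent of $\{s_n^{-1}(A)\}$ does not exceed $2/\kappa$ (resp.\ $1/\kappa$). For the matching lower bound under the additional hypothesis $\lambda_i(H)=O(i^{\kappa+\varepsilon})$, which amounts to $\lambda_n(R_H)\ge C_{\varepsilon}n^{-\kappa-\varepsilon}$, I would invoke the implication in Theorem~\ref{T1}(A): any $p$ with $A\in\mathfrak{S}_p$ must satisfy $(\kappa+\varepsilon)p>1$, and letting $\varepsilon\downarrow 0$ forces $p\ge 1/\kappa$. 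In the normal case this pins the convergence exponent to $1/\kappa$, so $A\in\tilde{\mathfrak{S}}_{1/\kappa}$.

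The one genuinely delicate point is the passage from the pointwise bound to $n_H(s)=o(s^{1/\kappa}/\ln s)$: the logarithmic correction must survive the inversion from index to spectral variable, and it is precisely the eventual monotonicity of $g$ that carries this through. Once that and the singular-number comparison from Theorem~\ref{T1} are in hand, the remaining analysis reduces to routine integration-by-parts manipulations on the Stieltjes integral.
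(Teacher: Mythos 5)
Your proposal is correct and follows essentially the same route as the paper: both arguments rest on the comparisons $s_i^{-1}(A)\gtrsim\lambda_i^{1/2}(H)$ in the general case and $s_i(A)\asymp\lambda_i(R_H)$ in the normal case, invert them monotonically to turn the pointwise hypothesis into the counting-function bounds with the logarithmic factor, and obtain the lower bound $\rho\geq 1/\kappa$ from a Ky--Fan type inequality applied to the eigenvectors of $V=(A+A^{\ast})/2$. The one step you compress substantially is the normal-case comparison: since $H$ is the form real part rather than $\mathfrak{Re}\,\tilde{W}$ itself, the paper must first establish that $V$ has a complete orthonormal eigensystem coinciding with that of $A$ and derive $s_i(A)=\left(1+\tan^{2}\theta_i\right)\lambda_i(V)$ together with $\lambda_i(V)\asymp\lambda_i(R_H)$, which is where most of its proof is spent, so your appeal to simultaneous diagonalization is the right idea but stands in for a nontrivial argument.
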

\begin{proof} Note that the fact $
A\in  \tilde{\mathfrak{S}}_{\rho} ,\, \rho\in[0, 2/\kappa],
$
follows directly from the Theorem \ref{T1}, claim $({\bf A}).$
In accordance with  relation (54) \cite{firstab_lit(arXiv non-self)kukushkin2018}, we have
$
(|A |^{2} f,f)_{\mathfrak{H}}=\|Af\|^{2}_{\mathfrak{H}}\leq C\cdot{\rm Re}(Af,f)_{\mathfrak{H}}= C(V f,f)_{\mathfrak{H}},
$
where $V:=  (A+A^{\ast})/2.$ In accordance with the  Theorem 5 \cite{firstab_lit(arXiv non-self)kukushkin2018}, we have
$
\lambda_{i}(V)\asymp \lambda_{i}(R_{H}),
$
thus we have $s_{i}(A)\leq C\lambda^{1/2}_{i}(R_{H});\,s^{-1}_{i}(A)\geq C\lambda^{1/2}_{i}( H ),$ the detailed proof of the latter fact see in the Theorem 7 \cite{firstab_lit(arXiv non-self)kukushkin2018}. Using the monotonous property of the functions, we have
$$
 \frac{\ln^{\kappa } s^{-1}_{i}(A)}{s^{-2}_{i}(A)} \leq C\cdot  \frac{\ln^{\kappa} \lambda_{i}(H)}{ \lambda_{i}(H) } \leq C\cdot \frac{\alpha_{i}}{i^{\kappa }},
$$
where $\alpha_{i}\rightarrow 0.$
Hence
$$
\frac{i\ln s^{-1}_{i}(A)}{s^{-2/\kappa}_{i}(A)}\leq C\cdot  \alpha^{1/\kappa}_{i}.
$$
Taking into account the facts
$
n(s^{-1}_{i})=i;\,n(r)= n(s^{-1}_{i}),\,s^{-1}_{i}<r<s^{-1}_{i+1},
$
using the monotonous property of the functions, we get
$$
\frac{n(r)\ln r}{r^{2/\kappa}}  <  C\cdot  \alpha_{i},\;s^{-1}_{i}<r<s^{-1}_{i+1}.
$$
The proof corresponding to the general case  is complete. Assume  that the operator $\tilde{W}$ is normal, then it is not hard to prove that $A$ is normal also. Let us show that the   operator $V:=(A+A^{\ast})/2$ has a complete orthonormal  system of the  eigenvectors.   Using   formula  (53) \cite{firstab_lit(arXiv non-self)kukushkin2018},   we get
$$
V^{\!^{-1}}=2H^ \frac{1 }{2}  (I+B^{2})      H^{ \frac{1}{2}}.
$$
Note that in accordance with relation (67)  \cite{firstab_lit(arXiv non-self)kukushkin2018}, we have
\begin{equation}\label{6a}
 (V^{^{\!-\!1}}\!\!f,f)_{\mathfrak{H}}  =2(  S      H^{ \frac{1}{2}}f,H^ \frac{1 }{2}f)_{\mathfrak{H}}\geq 2\|H^ \frac{1 }{2} f\|^{2}_{\mathfrak{H}}=2(Hf,f)_{\mathfrak{H}}  ,\,f\in \mathrm{D}(V^{^{\!-\!1}}),
\end{equation}
where $S=I+B^{2}.$
 Since  $V$ is selfadjoint, then   due to  Theorem 3 \cite[p.136]{firstab_lit: Ahiezer1966} the operator    $V^{^{\!-\!1}}$ is selfadjoint also.
  Combining  \eqref{6a} with
 Lemma   3   \cite{firstab_lit(arXiv non-self)kukushkin2018},    we get that    $V^{^{\!-\!1}}$ is strictly accretive.
Using these facts   we can write
\begin{equation*}
\|f\| _{V^{^{ -\!1}}} \geq C\|f\|_{H  } ,\,f\in  \mathfrak{H}_{ V^{^{ -\!1}}},
\end{equation*}
where the above norms are understood as the norms of the energetic spaces generated by the operators $V^{^{ -\!1}}$ and $H$ respectively.
Since  the operator $H$ has a discrete spectrum (see Theorem 5.3 \cite{firstab_lit:1kukushkin2018}), then any set  bounded   with respect to the      norm $\mathfrak{H}_{H}$ is a compact set   with respect to the norm    $\mathfrak{H}$ (see Theorem 4 \cite[p.220]{firstab_lit:mihlin1970}). Combining this fact with \eqref{6a},
 Theorem 3 \cite[p. 216]{firstab_lit:mihlin1970}, we get
  that the operator $V^{^{\!-\!1}}$ has a discrete spectrum, i.e.   it has   the infinite set of  the eigenvalues
$\lambda_{1}\leq\lambda_{2}\leq...\leq\lambda_{i}\leq..., \, \lambda_{i} \rightarrow \infty  ,\,i\rightarrow \infty$ and the   complete orthonormal system of the  eigenvectors.
 Now  note that the operators $V,\,V^{^{\!-\!1}}$ have the same eigenvectors.  Therefore   the operator   $V$ has the  complete orthonormal  system of the  eigenvectors. Recall  that any  complete orthonormal system  forms  a  basis in  the separable Hilbert space. Hence, the complete orthonormal  system of the   eigenvectors of the operator $V$ is
  a basis in the space $\mathfrak{H}.$ Since the operator $A$ is compact and  normal, then in accordance with the well-known theorem we have a fact that there exists an orthonormal system  of the eigenvectors $\{\psi_{i}\}_{1}^{\infty}$ of the operator $A.$ The system is complete in $\overline{\mathrm{R}(A)}$ in the following sense
  $$
 f=\sum\limits_{i=1}^{\infty}\psi_{i}(f,\psi_{i})_{\mathfrak{H}},\;f\in  \overline{\mathrm{R}(A)}.
 $$
  The corresponding system of eigenvalues is such that
  $$
  A\psi_{i}=\lambda_{i}(A)\psi_{i},\;A^{\ast}\psi_{i}=\overline{\lambda_{i}(A)}\psi_{i},\;i\in \mathbb{N}.
  $$
The latter facts give us $A^{\ast}A\psi_{i}=| \lambda_{i}(A) |^{2}\psi_{i}.$ Since the operator $A^{\ast}A$ is   selfadjoint and compact, then it is not hard to prove that  $s_{i}(A)= |\lambda_{i}(A)|$ (see Lemma 3.3 Chapter II \cite{firstab_lit:1Gohberg1965}) Thus, we get
\begin{equation}\label{7a}
s_{i}(A)=|(A \psi_{i},\psi_{i})_{\mathfrak{H}}|=\left(1+\tan^{2} \theta_{i}\right)|\mathrm{Re}(A\psi_{i},\psi_{i})_{\mathfrak{H}}|=
\end{equation}
$$
=
\left(1+\tan^{2} \theta_{i}\right)| (V\psi_{i},\psi_{i})_{\mathfrak{H}}|=\left(1+\tan^{2} \theta_{i}\right) \lambda_{i}(V),
$$
where the sequence $\{\tan^{2} \theta_{i}\}^{\infty}_{1}$  is bounded by virtue of the sectorial property of the operator. Note that the fact $\overline{\mathrm{R}(A)}=\mathfrak{H}$ indicates that $\{\psi_{i}\}_{1}^{\infty}$ is complete  in $\mathfrak{H}.$ It follows that the operators $V$ and $A$ have the same eigenvectors (since the  complete system of the eigenvectors of the operator $V$ is minimal and  at the same time it contains all eigenvectors of the operator $A$).  Therefore, we can claim that all eigenvalues of the operator $V$ are involved in the right-hand side of relation \eqref{7a}.
Taking into account the fact
$
 \lambda_{i}(V) \asymp \lambda_{i}(R_{H}),
$
 we   obtain the following relation
\begin{equation*}
 \sum\limits_{i=1}^{\infty}|s_{i} (A )|^{p}\leq C_{2}\sum\limits_{i=1}^{\infty}|
\lambda_{i} (R_{H} )|^{p},\, p>0.
\end{equation*}
 Using the theorem condition, we have $ A  \in   \,\mathfrak{S}_{p},\,p>1/\kappa.$  Hence   $ A  \in   \,\tilde{\mathfrak{S}}_{\rho},\,\rho\leq1/\kappa.$ At the same time applying the  above reasonings, we get
$$
\frac{\ln^{\kappa} s^{-1}_{i}(A)}{s^{-1}_{i}(A)}\leq C\cdot \frac{\ln^{\kappa} \lambda_{i}(H)}{\lambda_{i}(H)}\leq C\cdot \frac{\alpha_{i}}{i^{\kappa}}.
$$
Using this fact, we obtain the following relation
$$
 \frac{n(r)\ln r}{r^{1/\kappa}}  \rightarrow0,\,r\rightarrow\infty.
$$
Consider the additional condition $  \lambda_{i}(H) =O(i^{\kappa+\varepsilon}),\, \forall\varepsilon>0$ and
 let  $\{\psi_{i}\}_{1}^{\infty}$ still  be the  complete orthonormal system  of the   eigenvectors of the  operator  $V.$     Suppose    $
A\in\mathfrak{S}_{p},\,p\geq1,$ then by virtue of   inequalities  (7.9) Chapter III \cite{firstab_lit:1Gohberg1965}, the fact $\lambda_{i}(V) \asymp \lambda_{i}(R_{H})$
(see Theorem   5 \cite{firstab_lit:1kukushkin2018}),  we get
 $$
\sum\limits_{i=1}^{\infty}|s_{i} (A )|^{p}\geq\sum\limits_{i=1}^{\infty}| (A\varphi_{i},\varphi_{i})_{\mathfrak{H}}|^{p}\geq\sum\limits_{i=1}^{\infty}|{\rm Re}(A\varphi_{i},\varphi_{i})_{\mathfrak{H}}|^{p}=
$$
$$
=\sum\limits_{i=1}^{\infty}| (V \varphi_{i},\varphi_{i})_{\mathfrak{H}}|^{p}=\sum\limits_{i=1}^{\infty}
|\lambda_{i}(V)|^{p}\geq C   \sum\limits_{i=1}^{\infty}  i^{- (\kappa+\varepsilon) p }  .
$$
Therefore   $A\in \mathfrak{S}_{\rho},\, \rho \geq1/\kappa$ since  in the contrary case  the  relation  $p(\kappa+\varepsilon)>1$   does not hold.
The proof is complete.
\end{proof}
Consider the following example.
\begin{ex}\label{E2} Here we would like to produce an example of the sequence   $\{\lambda_{i} \}_{1}^{\infty}$  that satisfies the condition
 $$
 (\ln^{\kappa+1}x)'_{\lambda_{i} }  =o(  i^{-\kappa}   ),\, \kappa\in(0,1],\,
$$
$$
\sum\limits_{n=1}^{\infty}\frac{1}{|\lambda_{n}|^{1/\kappa}}=\infty.
$$
\end{ex}
Consider a sequence $\lambda_{i}=i^{\kappa}\ln^{\kappa} (i+q) \cdot \ln^{\kappa}\ln (i+q),\,q>e^{e}-1,\;i=1,2,...,\,.$  Using the integral test for convergence, we can easily see that the previous series is divergent. At the same time substituting, we get
$$
\frac{\ln^{\kappa}\lambda_{i}}{\lambda_{i}} \leq
 \frac{ C\ln^{\kappa}(i+q)  }{i^{\kappa}\ln^{\kappa} (i+q) \cdot \ln^{\kappa}\ln (i+q)}= \frac{ C }{i^{\kappa}   \cdot \ln^{\kappa}\ln (i+q)},
$$
what gives us the fulfilment of the  condition.\\

Bellow, we produce an auxiliary technique to study the central problem of the paper. The estimates for the Fredholm  Determinant were studied by Lidskii  in the paper \cite{firstab_lit:1Lidskii} and gave a main tool in questions related to   the  estimation of the contour integrals. We have slightly improved results by Lidskii having involved the auxiliary function $\beta$ and  obtaining in this way more accurate  results.\\

\noindent{\bf  Estimates for the Fredholm  Determinant}\\

In this section we produce an adopted version of the propositions given in the paper \cite{firstab_lit:1Lidskii}, we consider a case when a compact operator  belongs to the class $\tilde{\mathfrak{S}}_{\rho}.$  Having taken into account the facts considered in the previous subsection, we can reformulate Lemma 2 \cite{firstab_lit:1Lidskii} in the refined form.
\begin{lem}\label{L5}   Assume that a compact operator $B$ satisfies   the  condition   $B\in \tilde{\mathfrak{S}}_{\rho},$       then
for arbitrary numbers  $R,\delta$   such that $R>0,\,0<\delta<1,$ there exists a  circle $|\lambda|=\tilde{R},\,(1-\delta)R<\tilde{R}<R,$ so that the following estimate holds
$$
\|(I-\lambda B )^{-1}\|_{\mathfrak{H}}\leq e^{\gamma(|\lambda|)|\lambda|^{\varrho}}|\lambda|^{m},\,|\lambda|=\tilde{R},\,m=[\varrho],\,\varrho\geq\rho,
$$
where
$$
\gamma(|\lambda|)= \beta ( |\lambda|^{m+1})  +C \beta(|C  \lambda| ^{m+1}),\;\beta(r )= r^{ -\frac{\varrho}{m+1} }\left(\int\limits_{0}^{r}\frac{n_{B^{m+1}}(t)dt}{t }+
r \int\limits_{r}^{\infty}\frac{n_{B^{m+1}}(t)dt}{t^{ 2  }}\right).
$$
\end{lem}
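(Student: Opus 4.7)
The plan is to reduce to the trace class setting via the algebraic factorization
$$(I-\lambda B)^{-1}=(I+\lambda B+\dots+\lambda^{m}B^{m})(I-\lambda^{m+1}B^{m+1})^{-1},$$
and then treat the operator $T:=B^{m+1}$ by classical Fredholm determinant methods. Since $B\in\tilde{\mathfrak{S}}_{\rho}$ with $\rho\leq\varrho<m+1$, standard singular-number inequalities yield $T\in\mathfrak{S}_{q}$ for every $q>\rho/(m+1)$, hence $T\in\mathfrak{S}_{1}$. The polynomial prefactor is bounded by $C(1+|\lambda|)^{m}$, which accounts for the factor $|\lambda|^{m}$ in the final estimate.

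For the Fredholm determinant $D(\lambda):=\det(I-\lambda^{m+1}T)$, viewed as a function of $\mu=\lambda^{m+1}$, the convergence exponent of the zeros $\{1/\lambda_{n}(T)\}$ does not exceed $\rho/(m+1)<1$, so $D$ admits a canonical product representation of genus zero. I would then apply Lemma \ref{L1} with $p=0$ combined with the Weyl inequality $|\lambda_{n}(T)|\leq s_{n}(T)$, which dominates the counting function of $\{1/\lambda_{n}(T)\}$ by $n_{B^{m+1}}$. Substituting $r=|\lambda|^{m+1}$ yields the upper bound
$$\log|D(\lambda)|\leq C\beta(|\lambda|^{m+1})|\lambda|^{\varrho},$$
which accounts for the first summand of $\gamma$. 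The operator-valued numerator in the Fredholm representation $(I-\lambda^{m+1}T)^{-1}=N(\lambda)/D(\lambda)$ is bounded by $\det(I+|\lambda|^{m+1}|T|)=\prod_{n}(1+|\lambda|^{m+1}s_{n}(T))$, whose logarithm admits, via integration by parts in the Stieltjes sense, exactly the same integral estimate in terms of $n_{B^{m+1}}$.

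The decisive step is to furnish a matching lower bound for $|D(\lambda)|$ on some circle inside the prescribed annulus. For this I invoke a Cartan-type minimum-modulus theorem: given $R>0$ and $0<\delta<1$, outside a union of exceptional disks whose radii sum to less than $\delta R/2$, one has
$$\log|D(\lambda)|\geq -C\log M(e|\lambda|,D),$$
where $M(r,D):=\max_{|z|=r}|D(z)|$. Since the linear measure of bad radii is strictly smaller than the width of the annulus, some $\tilde{R}\in((1-\delta)R,R)$ yields a circle disjoint from all such disks. Substituting the upper estimate for $M$ evaluated at the enlarged radius into this lower bound produces a term of the form $C\beta(|C\lambda|^{m+1})|\lambda|^{\varrho}$, which is precisely the second summand of $\gamma$; combining with the numerator bound and the polynomial prefactor yields the claimed estimate.

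The principal obstacle is the Cartan step: one must verify that the constants arising from the enlargement of the evaluation radius are absorbed exactly as the argument $|C\lambda|^{m+1}$ of $\beta$, and that the total measure of exceptional radii genuinely leaves an admissible $\tilde{R}$ in $((1-\delta)R,R)$. The remaining bookkeeping, namely the change of variable relating the counting function of $\{1/\lambda_{n}(T)\}$ to $n_{B^{m+1}}$ and the verification that $T$ is trace class when $B\in\tilde{\mathfrak{S}}_{\rho}$ and $m=[\varrho]$, is routine.
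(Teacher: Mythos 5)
Your proposal follows essentially the same route as the paper: the factorization $(I-\lambda B)^{-1}=(I+\lambda B+\dots+\lambda^{m}B^{m})(I-\lambda^{m+1}B^{m+1})^{-1}$, reduction to the trace-class operator $B^{m+1}$ (using $\rho/(m+1)<1$), the determinant--resolvent bound $\|\Delta_{B^{m+1}}(\mu)(I-\mu B^{m+1})^{-1}\|\leq\prod_{i}(1+|\mu|s_{i}(B^{m+1}))$ estimated via the canonical-product lemma, and the Cartan-type minimum-modulus theorem (Theorem 11 of Levin, with the $12e/\delta$ and $2e\tilde R$ constants) to secure a good circle in the annulus. The only slip is the pointwise claim $|\lambda_{n}(T)|\leq s_{n}(T)$, which is not Weyl's inequality as stated, but it is not needed since the determinant is bounded directly by the product over $s$-numbers, exactly as in the paper.
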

\begin{proof} We consider the case $\varrho=\rho,$ the reasonings corresponding to the case $\varrho>\rho$  can be fulfilled in accordance with the same scheme but much simpler and left to  the reader.  Using  the definition, we have  $B\in \mathfrak{S}_{\rho+\varepsilon},\,\varepsilon>0.$
By direct calculation we get
\begin{equation}\label{8a}
(I-\lambda^{m+1}B^{m+1})^{-1}(I+\lambda B+\lambda^{2} B^{2}+...+\lambda^{m}B^{m})=(I-\lambda B )^{-1}.
\end{equation}
In accordance with Lemma 3 \cite{firstab_lit:1Lidskii}, for sufficiently small $\varepsilon>0,$ we have
$$
\sum\limits_{i=1}^{\infty}\lambda^{\frac{\rho+\varepsilon}{m+1}}_{i}( \tilde{B} )\leq \sum\limits_{i=1}^{\infty}\lambda^{ \rho+\varepsilon }_{i}( B )<\infty,
$$
where $\tilde{B}:=(B^{\ast m+1}B^{m+1})^{1/2}.$       Applying  inequality  (1.27) \cite[p.10]{firstab_lit:1Lidskii} (since $  \rho  /(m+1)<1$), using Lemma \ref{L2},   we get
$$
\|\Delta_{B^{m+1}}(\lambda^{m+1})(I-\lambda^{m+1} B^{m+1})^{-1}\|_{\mathfrak{H}}\leq C\prod\limits_{i=1}^{\infty}\{1+|\lambda^{m+1} s_{i}( B^{m+1} )|\}\leq Ce^{ \beta (r^{m+1})r^{\rho }},
$$
where $\Delta_{B^{m+1}}(\lambda^{m+1})$ is a Fredholm  determinant of the operator $B^{m+1}$  (see \cite[p.8]{firstab_lit:1Lidskii}).
In accordance with Theorem 11 \cite[p.33]{firstab_lit:Eb. Levin}, we have
$$
\Delta_{B^{m+1}}(\lambda^{m+1})\geq e^{-(2+\ln\{12e/\delta\})\ln\xi_{m}},\;\xi_{m}= \!\!\!\max\limits_{\psi\in[0,2\pi /(m+1)]}\{\Delta_{B^{m+1}}([2e \tilde{R}e^{i\psi}]^{m+1})\},
$$
where $R,\delta$ arbitrary numbers such that $R>0,\,0<\delta<1,$ the values of $\lambda$ belong to the  circle $|\lambda|=\tilde{R},$ which radius   is defined by $R,\delta$  and satisfy the condition $(1-\delta)R<\tilde{R}<R.$ Note that in accordance with the estimate (1.21) \cite[p.10]{firstab_lit:1Lidskii}, we have
$$
\Delta_{B^{m+1}}(\lambda)\leq C\prod\limits_{i=1}^{\infty}\{1+|\lambda s_{i}( B^{m+1} )|\}.
$$
Therefore, applying  Lemma    \ref{L2},  we get
$
\xi_{m}\leq e^{ \beta ([2e \tilde{R }]^{m+1})( 2e \tilde{R}  )^{\rho}}.
$
Consider relation \eqref{8a}, we have the following estimate
$$
 \|(I-\lambda B )^{-1}\|_{\mathfrak{H}}\leq\|(I-\lambda^{m+1}B^{m+1})^{-1}\|_{\mathfrak{H}} \cdot\|(I+\lambda B+\lambda^{2} B^{2}+...+\lambda^{m}B^{m})\|_{\mathfrak{H}}\leq
$$
$$
\leq\|(I-\lambda^{m+1}B^{m+1})^{-1}\|_{\mathfrak{H}} \cdot \frac{|\lambda|^{m+1}\|B\|^{m+1}-1}{|\lambda|\cdot\|B\|-1}.
$$
We can easily see   that to obtain the desired result  it suffices to estimate the term $\|(I-\lambda^{m+1}B^{m+1})^{-1}\|_{\mathfrak{H}}.$ Using the obtained estimates, we have
$$
\|(I-\lambda^{m+1}B^{m+1})^{-1}\|_{\mathfrak{H}}\leq  e^{\gamma (  |\lambda|) |\lambda| ^{\rho}},\,|\lambda|=\tilde{R},
$$
where $\gamma (|\lambda|)= \beta ( |\lambda|^{m+1})  +(2+\ln\{12e/\delta\}) \beta_{m}(|2e  \lambda| ^{m+1}) (2e) ^{\rho}.$   Thus, we obtain the desired result.
\end{proof}

\noindent{\bf Abel-Lidsky summarizing the series}\\

 In this subsection, we reformulate  results obtained by Lidskii \cite{firstab_lit:1Lidskii} in a more  convenient  form applicable to the reasonings of this paper.   However,  let us begin our narrative.    In accordance with the Hilbert theorem
  (see \cite{firstab_lit:Riesz1955}, \cite[p.32]{firstab_lit:1Gohberg1965})   the spectrum of an arbitrary  compact operator $B$  consists of the so called normal eigenvalues it gives us the opportunity to consider a decomposition
\begin{equation}\label{9a}
 \mathfrak{H}=\mathfrak{N}_{q}\stackrel{\cdot}{+}  \mathfrak{M}_{q},
 \end{equation}
where both  summands are   invariant subspaces regarding the operator $B,$  the first one is  a finite dimensional root subspace corresponding to the eigenvalue $\mu_{q}$ and the second one is a subspace  wherein the operator  $B-\mu_{q} I$ is invertible.  Let $n_{q}$ is a dimension of $\mathfrak{N}_{q}$ and let $B_{q}$ is the operator induced in $\mathfrak{N}_{q}.$ We can choose a basis (Jordan basis) in $\mathfrak{N}_{q}$ that consists of Jordan chains of eigenvectors and root vectors  of the operator $B_{q}.$  Each chain $e_{q_{\xi}},e_{q_{\xi}+1},...,e_{q_{\xi}+k},$ where $e_{q_{\xi}},\,\xi\in \mathbb{N}$ are the eigenvectors  corresponding   to the  eigenvalue $\mu_{q}$  and other terms are root vectors,   can be transformed by the operator $A$ according with  the following formulas
\begin{equation}\label{10a}
Be_{q_{\xi}}=\mu_{q}e_{q_{\xi}},\;Be_{q_{\xi}+1}=\mu_{q}e_{q_{\xi}+1}+e_{q_{\xi}},...,Be_{q_{\xi}+k}=\mu_{q}e_{q_{\xi}+k}+e_{q_{\xi}+k-1}.
\end{equation}
Considering the sequence $\{\mu_{i}\}_{1}^{\infty}$ of the eigenvalues of the operator $B$ and choosing a  Jordan basis in each corresponding  space $\mathfrak{N}_{i}$ we can arrange a system of vectors $\{e_{k}\}_{1}^{\infty}$ which we will call a system of the root vectors or following  Lidskii  a system of the major vectors of the operator $A.$
Let $e_{1},e_{2},...,e_{n_{i}}$ be the Jordan basis in the subspace $\mathfrak{N}_{i},$ then in accordance with    Lidskii \cite{firstab_lit:1Lidskii}   there exists a  corresponding biorthogonal basis $g_{1},g_{2},...,g_{n_{i}}$ in the space $\mathfrak{M}_{i}^{\perp}$ (see \cite[p.14]{firstab_lit:1Lidskii}), note that in accordance with our clarification $\mathfrak{M}_{i}^{\perp}=\mathfrak{N}_{i}.$  Moreover the set $\{ g_{k}\}_{1}^{n_{i}}$ consists of the Jordan chains of the operator $B^{\ast}$ which correspond to the Jordan chains  \eqref{10a} due to the following formula
$$
B^{\ast}g_{q_{\xi}+k}=\overline{\mu_{q}}g_{q_{\xi}+k},\;B^{\ast}g_{q_{\xi}+k-1}=\overline{\mu_{q}}g_{q_{\xi}+k-1}+g_{q_{\xi}+k},...,
B^{\ast}g_{q_{\xi}}=\overline{\mu_{q}}g_{q_{\xi}}+g_{q_{\xi}+1}.
$$
 Let us show that   $\mathfrak{N}_{i}\subset \mathfrak{M}_{j},\,i\neq j$ for this purpose note that in accordance with the  representation  $P_{\mu_{i}}\mathfrak{H}=\mathfrak{N}_{i}$ and the property $P_{\mu_{i}}P_{\mu_{j}}=0,\,i\neq j,$ where $P_{\mu_{i}}$  is a Riesz projector (integral) corresponding to the eigenvalue $\mu_{i}$ (see \cite{firstab_lit:1Gohberg1965} Chapter I \S 1.3), we have an orthogonal  decomposition
 $
 \mathfrak{H}=\mathfrak{N}_{i} \stackrel{\cdot}{+} \mathfrak{N}_{j}\stackrel{\cdot}{+} \mathfrak{M}_{ij},
 $
where $\mathfrak{M}_{ij}=(I-P_{\mathfrak{N}_{i} \stackrel{\cdot}{+} \mathfrak{N}_{j}})\mathfrak{H}.$
On the other hand in accordance with \cite{firstab_lit:1Gohberg1965} Chapter I \S 2.1   we can claim that the following orthogonal  decomposition is unique
 $$
 \mathfrak{H}= \mathfrak{N}_{j}\stackrel{\cdot}{+} \mathfrak{M}_{j},
 $$
hence  we have an orthogonal sum
$
 \mathfrak{M}_{j} = \mathfrak{N}_{i}\stackrel{\cdot}{+} \mathfrak{M}_{ij},
 $
what proves the desired result.
 Taking into account relation  \eqref{9a}, we conclude that  the set  $g_{1},g_{2},...,g_{n_{i}},\,i\neq j$  is orthogonal to the set $e_{1},e_{2},...,e_{n_{j}}.$  Gathering the sets $g_{1},g_{2},...,g_{n_{i}},\,i=1,2,...,$ we can obviously create a biorthogonal system $\{g_{i}\}_{1}^{\infty}$ with respect to the system of the major vectors of the operator $B.$ It is rather reasonable to call it as  a system of the major vectors of the operator $B^{\ast}.$ Note that if an element $f\in\mathfrak{H}$ allows a decomposition in the strong sense
$$
f=\sum\limits_{n=1}^{\infty}e_{n}c_{n},\,c_{n}\in \mathbb{C},
$$
then by virtue of  the biorthogonal  system existing, we can claim that such a representation is unique. Further, let us come to the previously made  agrement that the vectors in each Jourdan chain are arranged in the same order as in \eqref{10a} i.e.  at the first place there stands an eigenvector. It is clear that under such an assumption  we have
$$
c_{q_{\xi}+i}=\frac{(f,g_{q_{\xi}+k-i})}{(e_{q_{\xi}+i},g_{q_{\xi}+k-i})},\,0\leq i\leq k(q_{\xi}),
$$
where $k(q_{\xi})+1$ is a number of elements in the $q_{\xi}$-th Jourdan chain. In particular, if the vector $e_{q_{\xi}}$ is included to the major system solo, there does not exist a root vector corresponding to the same eigenvalue, then
$$
c_{q_{\xi}}=\frac{(f,g_{q_{\xi}})}{(e_{q_{\xi}},g_{q_{\xi}})}.
$$
Note that in accordance with the property of the biorthogonal sequences, we can expect that the denominators equal to one in the previous two relations.
Consider a formal series corresponding to a decomposition on the major vectors of the operator $B$
$$
f\sim\sum\limits_{n=1}^{\infty}e_{n}c_{n},
$$
where each number $n$ corresponds to a number $q_{\xi}+i$  (thus, the coefficients $c_{n}$ are defined in accordance with the above and  numerated in a simplest way). Consider a set of the polynomials with respect to a real parameter $t$
$$
P^{\alpha}_{m}(\zeta^{-1},t)=\frac{e^{t\zeta^{-\alpha}}}{m!}\frac{d^{m}}{d\zeta^{m}}\,e^{-t\zeta^{-\alpha}},\,\alpha>0,\,m=1,2,...,\,.
$$
Consider a series
\begin{equation}\label{11a}
\sum\limits_{n=1}^{\infty}c_{n}(t)e_{n},
\end{equation}
where the coefficients $c_{n}(t)$ are defined  in accordance with the correspondence between the indexes  $n$ and $q_{\xi}+i$ in the following way
\begin{equation}\label{11x}
c_{q_{\xi}+i}(t)=e^{-\lambda^{\alpha}_{q}t}\sum\limits_{m=0}^{k-i}P_{m}^{\alpha}(\lambda_{q_{\xi}},t)c_{q_{\xi}+i+m},\,i=0,1,2,...,k,
\end{equation}
here $\lambda_{q}=1/\mu_{q}$ is a characteristic number corresponding to $e_{q_{\xi}}.$
It is clear  that in any case, we have
$
c_{n}(t)\rightarrow c_{n},\,t\rightarrow0
$
(it can be established by direct calculations).
In accordance with the definition given in \cite[p.17]{firstab_lit:1Lidskii} we will say that series \eqref{11a} converges  to the element $f$ in the sense $(B,\lambda,\alpha),$ if there exists a sequence of the natural numbers $\{N_{j}\}_{1}^{\infty}$ such that
$$
f=\lim\limits_{t\rightarrow+0}\lim\limits_{j\rightarrow\infty}\sum\limits_{n=1}^{N_{j}}c_{n}(t)e_{n}.
$$
Note that   sums of the latter relation forms a subsequence of the partial sums of the series \eqref{11a}.

To establish the main results  we need the following lemmas by Lidskii. Note that in spite of the fact that we have rewritten the lemmas in the refined form the proof has not been changed  and can be found in the paper \cite{firstab_lit:1Lidskii}.
  Further, considering an arbitrary  compact operator $B: \mathfrak{H}\rightarrow \mathfrak{H}$ such that
$
\Theta(B)\subset \mathfrak{L}_{0}(\theta),\,-\pi<\theta<\pi,
$
we put the following contour   in correspondence to the operator
\begin{equation}\label{11f}
\gamma(B):=\left\{\lambda:\;|\lambda|=r>0,\,|\mathrm{arg} \lambda|\leq \theta+\varepsilon\right\}\cup\left\{\lambda:\;|\lambda|>r,\; |\mathrm{arg} \lambda|=\theta+\varepsilon\right\},
\end{equation}
where $\varepsilon>0$ is an arbitrary small number, the number $r$ is chosen so that the operator  $ (I-\lambda B)^{-1} $ is regular within the corresponding closed circle. Here we should note that the compactness property of $B$ gives us the fact   $(I-\lambda B)^{-1}\in \mathcal{B}(\mathfrak{H}),\,\lambda \in \mathbb{C}\setminus \mathrm{ int} \gamma\, (B).$ It can be proved easily if we note that in accordance with the Corollary 3.3 \cite[p.268]{firstab_lit:kato1980}, we have $\mathrm{P}(B)\subset \mathbb{C}\setminus \overline{\Theta(B)}.$

\begin{lem} \label{L6} Assume that $B$ is a compact  operator,  $\Theta(B)\subset \mathfrak{L}_{0}(\theta),\,-\pi<\theta<\pi,$ then on each ray $\zeta$ containing the point zero and not belonging to the sector $\mathfrak{L}_{0}(\theta)$ as well as the  real axis, we have
$$
\|(I-\lambda B)^{-1}\|\leq \frac{1}{\sin\varphi},\,\lambda\in \zeta,
$$
where $\,\varphi = \min \{|\mathrm{arg}\zeta -\theta|,|\mathrm{arg}\zeta +\theta|\}.$
\end{lem}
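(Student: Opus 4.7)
The plan is to derive the estimate from a Cauchy--Schwarz lower bound combined with an elementary geometric computation. For any $f\in \mathfrak{H}$ with $\|f\|_{\mathfrak{H}}=1$,
$$
\|(I-\lambda B)f\|_{\mathfrak{H}}\;\ge\;\bigl|\bigl((I-\lambda B)f,f\bigr)_{\mathfrak{H}}\bigr|\;=\;\bigl|1-\lambda(Bf,f)_{\mathfrak{H}}\bigr|,
$$
and, since $(Bf,f)_{\mathfrak{H}}\in\Theta(B)\subset\mathfrak{L}_{0}(\theta)$, the quantity $w:=\lambda(Bf,f)_{\mathfrak{H}}$ lies in the rotated sector $S_{\lambda}:=\{re^{i\psi}:r\ge 0,\ \psi\in[\arg\lambda-\theta,\arg\lambda+\theta]\}$. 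It therefore suffices to show $\mathrm{dist}(1,S_{\lambda})\ge\sin\varphi$.

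For the geometric step I would first minimize $|1-re^{i\psi}|$ over $r\ge 0$ for fixed $\psi$: differentiating $(1-r\cos\psi)^{2}+r^{2}\sin^{2}\psi$ gives the critical value $r^{\ast}=\cos\psi$, which produces the minimum $|\sin\psi|$ when $\cos\psi\ge 0$, while for $\cos\psi<0$ the minimum is attained at $r=0$ and equals $1$. The remaining minimization over $\psi\in[\arg\lambda-\theta,\arg\lambda+\theta]$ requires a short case split. The hypothesis that $\zeta$ lies outside $\mathfrak{L}_{0}(\theta)$ and is not the real axis ensures this arc contains neither $0$ nor $\pi$ in its interior (mod $2\pi$), and the endpoint closest to $0$ has angular distance exactly $\varphi$. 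If $\varphi\le\pi/2$, the minimum is attained as the perpendicular foot from $1$ onto the nearer boundary ray and equals $\sin\varphi$. If $\varphi>\pi/2$, the foot lies behind the origin, so the true distance is $1\ge\sin\varphi$. In both regimes $\mathrm{dist}(1,S_{\lambda})\ge\sin\varphi$.

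Combining the two steps yields $\|(I-\lambda B)f\|_{\mathfrak{H}}\ge\sin\varphi\cdot\|f\|_{\mathfrak{H}}$, so $I-\lambda B$ is bounded below by $\sin\varphi$. Compactness of $B$ makes $I-\lambda B$ Fredholm of index zero, so the lower bound forces invertibility and translates into $\|(I-\lambda B)^{-1}\|_{\mathfrak{H}}\le 1/\sin\varphi$. The exclusion of the real axis is exactly what guarantees $\sin\varphi>0$. The only delicate point I expect is the geometric case analysis, especially making sure the bound stays valid when the rotated sector $S_{\lambda}$ wraps past the negative real axis (large $\theta$, or $\arg\lambda$ close to $\pm\pi$); once that is checked, the Fredholm step is entirely standard.
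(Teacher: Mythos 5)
Your proof is correct and follows what is essentially the canonical route: the paper itself gives no proof of Lemma \ref{L6} (it states the proofs are unchanged and defers to Lidskii's paper), and that proof is precisely your Cauchy--Schwarz bound $\|(I-\lambda B)f\|\ge|1-\lambda(Bf,f)_{\mathfrak{H}}|$ combined with the estimate of the distance from $1$ to the rotated sector, with the Fredholm alternative supplying invertibility. One small inaccuracy: the arc of arguments $[\arg\lambda-\theta,\arg\lambda+\theta]$ \emph{can} contain $\pi$ when $\theta>\pi/2$, but this is harmless since your case analysis only needs that the arc avoids $0$ and that its nearest point to $0$ is an endpoint at angular distance $\varphi$, which your wrap-around check correctly establishes.
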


\begin{lem}\label{L7} Assume that the operator $B$ satisfies conditions of Lemma \ref{L6}, $f\in \mathrm{R}(B),$ then
$$
\lim\limits_{t\rightarrow+0}\int\limits_{\gamma(B)}e^{-\lambda^{\alpha}t}B(I-\lambda B)^{-1}fd\lambda=f,\,\alpha>0.
$$
\end{lem}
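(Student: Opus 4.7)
The plan is to view the contour integral as a residue sum over the characteristic numbers $\lambda_{q}=1/\mu_{q}$, and to identify its $t\to 0^{+}$ limit as $f$ via the hypothesis $f\in\mathrm{R}(B)$.

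First I would truncate $\gamma(B)$ to $\gamma_{N}=\gamma(B)\cap\{|\lambda|\le R_{N}\}$, closed by an arc at $|\lambda|=R_{N}$ whose radii are selected per Lemma~\ref{L5}. Two ingredients control the closing-arc contribution: (i) the Fredholm-determinant bound of Lemma~\ref{L5} for $\|(I-\lambda B)^{-1}\|$ on the chosen radii, and (ii) the exponential decay of $e^{-\lambda^{\alpha}t}$ when $\varepsilon$ is chosen so that $\alpha(\theta+\varepsilon)<\pi/2$; on the rays of $\gamma(B)$ Lemma~\ref{L6} additionally gives the uniform bound $\|(I-\lambda B)^{-1}\|\le 1/\sin\varepsilon$. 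Together these force the closing-arc integral to vanish as $N\to\infty$, so that the residue theorem on $\gamma_{N}$ identifies the integral with the absolutely convergent sum of residues at all $\lambda_{q}$ (with $2\pi i$ and orientation absorbed into the contour-integral convention).

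Next, each residue is computed by combining the Laurent expansion of $(I-\lambda B)^{-1}$ at $\lambda_{q}$, which produces the Riesz projector $P_{\mu_{q}}$ together with the nilpotent contributions of the Jordan chains \eqref{10a}, with the Taylor expansion of $e^{-\lambda^{\alpha}t}$ at $\lambda_{q}$, which produces exactly the polynomials $P_{m}^{\alpha}(\lambda_{q},t)$ introduced earlier in the paper. Using the biorthogonality of $\{e_{n}\}$ and $\{g_{n}\}$, the $q$th residue evaluates to the finite sum $\sum_{i,\xi}c_{q_{\xi}+i}(t)\,e_{q_{\xi}+i}$ with coefficients as in \eqref{11x}. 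Summing over $q$, the contour integral equals the Abel-summed series \eqref{11a}.

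Finally, to pass to the limit $t\to 0^{+}$, I would use $f=Bh$: the algebraic identity $B(I-\lambda B)^{-1}f=\lambda^{-2}\bigl[(I-\lambda B)^{-1}h-h-\lambda Bh\bigr]$ makes the integrand analytic at $\lambda=0$, while $B$ acts on $\mathfrak{N}_{q}$ as $\mu_{q}I+N_{q}$ with $N_{q}$ nilpotent, supplying an extra factor of $|\mu_{q}|$ in each coefficient of the residue series. This absolute decay, combined with $|e^{-\lambda_{q}^{\alpha}t}|\le 1$ for $t\ge 0$, permits dominated convergence termwise in the series, and equivalently permits passing the $t\to 0^{+}$ limit directly inside the contour integral; applied to the now-analytic-at-zero integrand $B^{2}(I-\lambda B)^{-1}h$, an explicit local calculation based on the $\lambda^{-2}$ identity together with Cauchy's theorem produces the value $f$.

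The main obstacle is the final identification of the limit as $f$: since completeness of the root system of $B$ is not yet available at this point in the paper, one must rely on the direct contour evaluation rather than a formal resummation of the residue series. The analyticity at $\lambda=0$ of $B^{2}(I-\lambda B)^{-1}h$, itself a consequence of $f\in\mathrm{R}(B)$, is the critical gain that makes this direct evaluation possible.
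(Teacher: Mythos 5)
The paper does not actually prove Lemma \ref{L7}: it states explicitly that the proofs of Lemmas \ref{L6}--\ref{L8} are taken unchanged from Lidskii's paper, so your proposal can only be measured against that standard argument. Your final paragraph reconstructs it correctly in its essentials: write $f=Bh$, use the identity $B(I-\lambda B)^{-1}f=\lambda^{-2}\left[(I-\lambda B)^{-1}h-h-\lambda Bh\right]$ to gain decay on the rays, pass to the limit $t\rightarrow+0$ under the integral sign, and evaluate the limiting integral by Cauchy's theorem through the second-order pole at $\lambda=0$, which is the only singularity of $\lambda^{-2}(I-\lambda B)^{-1}h$ in the complement of $\mathrm{int}\,\gamma(B)$ and whose residue is exactly $Bh=f$. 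This is the right mechanism, and you are also right that the direct contour evaluation, not a resummation of the residue series, is what must carry the proof, since completeness of the root system is not available at this stage.

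Two points need tightening. First, dominated convergence cannot be applied to $B^{2}(I-\lambda B)^{-1}h$ as it stands: on the infinite rays this function is only $O(|\lambda|^{-1})$ because of the $\lambda^{-1}Bh$ term, which is not absolutely integrable over $\gamma(B)$. You must first discard the terms $\lambda^{-2}h$ and $\lambda^{-1}Bh$, each of which integrates to zero against $e^{-\lambda^{\alpha}t}$ over $\gamma(B)$ by Cauchy's theorem; only then is the surviving term $\lambda^{-2}(I-\lambda B)^{-1}h$ dominated by $C|\lambda|^{-2}$ via Lemma \ref{L6}, so that the limit passage is legitimate. Second, your opening two paragraphs, which close the contour with arcs chosen by Lemma \ref{L5} and identify the integral with the residue series, are both unnecessary for this lemma and unavailable under its hypotheses: Lemma \ref{L5} requires $B\in\tilde{\mathfrak{S}}_{\rho}$, whereas Lemma \ref{L7} assumes only compactness and the sectorial condition, so that detour should simply be deleted. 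Note also that convergence of the integral over $\gamma(B)$ and the bound $|e^{-\lambda^{\alpha}t}|\leq1$ on the contour tacitly require $\alpha(\theta+\varepsilon)<\pi/2$; you assume this, the statement of the lemma does not, and the restriction (inherited from the context of Theorem \ref{T2}) should be made explicit.
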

\begin{lem}\label{L8} Assume that   $B$ is a compact operator, then  in the pole $\lambda_{q}$ of the operator  $(I-\lambda B)^{-1},$ the residue of the vector  function $e^{-\lambda^{\alpha}t}B(I-\lambda B)^{-1}\!f,\,(f\in \mathfrak{H}),\,\alpha>0$  equals to
$$
-\sum\limits_{\xi=1}^{m(q)}\sum\limits_{i=0}^{k(q_{\xi})}e_{q_{\xi}+i}c_{q_{\xi}+i}(t),
$$
where $m(q)$ is a geometrical multiplicity of the $q$-th eigenvalue,  $k(q_{\xi})+1$ is a number of elements in the $q_{\xi}$-th Jourdan chain, the coefficients $c_{q_{\xi}+i}(t)$ are defined in accordance with  formula \eqref{11x}.
\end{lem}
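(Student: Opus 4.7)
The plan is to reduce the residue computation to a finite-dimensional calculation by way of the Riesz decomposition from (\ref{9a}). The subspaces $\mathfrak{N}_q$ and $\mathfrak{M}_q$ are both $B$-invariant, and the restriction of $(I-\lambda B)^{-1}$ to $\mathfrak{M}_q$ is holomorphic at $\lambda_q$, so it contributes nothing to the residue. Hence only the projection $P_q f$ of $f$ onto $\mathfrak{N}_q$ matters, and on $\mathfrak{N}_q$ the operator $B$ acts via the Jordan chains described in (\ref{10a}).

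First I would compute $(I-\lambda B)^{-1} e_{q_\xi + i}$ in closed form. Writing $B|_{\mathfrak{N}_q} = \mu_q I + N$ with $N$ the nilpotent Jordan shift, and solving $(I-\lambda B) f = e_{q_\xi + i}$ by descending induction on the chain index, one obtains
$$
(I-\lambda B)^{-1} e_{q_\xi + i} = \sum_{j=0}^{i} \frac{\lambda^{j}}{(1-\lambda\mu_q)^{j+1}}\, e_{q_\xi + i - j}.
$$
Applying $B = \mu_q I + N$ and collecting like basis vectors, the coefficient of $e_{q_\xi + l}$ in $B(I-\lambda B)^{-1} P_q f$, restricted to the $\xi$-th chain, telescopes to
$$
c_{q_\xi + l}\,\frac{\mu_q}{1-\lambda\mu_q} \;+\; \sum_{m=1}^{k(q_\xi)-l} c_{q_\xi + l + m}\,\frac{\lambda^{m-1}}{(1-\lambda\mu_q)^{m+1}}.
$$

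Next, substituting $1-\lambda\mu_q = -\mu_q(\lambda-\lambda_q)$ exposes a pole of order at most $k(q_\xi)+1$ at $\lambda=\lambda_q$. Using the standard identity $\operatorname{Res}_{\lambda_q} g(\lambda)(\lambda - \lambda_q)^{-m-1} = g^{(m)}(\lambda_q)/m!$, each term in the residue of $e^{-\lambda^\alpha t} B(I-\lambda B)^{-1} f$ becomes a multiple of an $m$-th derivative of $e^{-\lambda^\alpha t}\lambda^{m-1}$ evaluated at $\lambda_q$, together with the scalar $(-\mu_q)^{-(m+1)}$.

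The main obstacle is to identify these $\lambda$-derivatives with the polynomial $P_m^{\alpha}(\lambda_{q_\xi},t)$, which is defined through $\zeta$-derivatives of $e^{-t\zeta^{-\alpha}}$ at $\zeta=\mu_q$. My approach here is to perform the change of variable $\zeta = 1/\lambda$ directly inside the contour-integral representation of the residue. The orientation of a small loop around $\lambda_q$ is preserved under this substitution, and the Jacobian $d\lambda = -d\zeta/\zeta^2$ together with the identity $(1-\lambda\mu_q)^{m+1} = (\mu_q - \zeta)^{m+1}/(\zeta\mu_q)^{m+1}$ converts the integrand into $-\mu_q^{m+1}\,e^{-\zeta^{-\alpha}t}/(\mu_q-\zeta)^{m+1}$. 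The residue at $\zeta=\mu_q$ then supplies exactly $\frac{1}{m!}\frac{d^m}{d\zeta^m} e^{-\zeta^{-\alpha}t}\vert_{\zeta=\mu_q}$, and assembling the constants one finds that the $m\geq 1$ contribution to the coefficient of $e_{q_\xi+l}$ equals $-\,e^{-\lambda_q^{\alpha} t}\,P_m^{\alpha}(\lambda_{q_\xi},t)\,c_{q_\xi + l + m}$. The $m=0$ term gives $-e^{-\lambda_q^{\alpha} t} c_{q_\xi + l}$, which is the $m=0$ value in the same formula since $P_0^{\alpha}\equiv 1$. Summing over the chain index $l$ (relabelled as $i$) and over all chains $\xi$ associated with $\mu_q$ reproduces precisely $-\sum_{\xi=1}^{m(q)}\sum_{i=0}^{k(q_\xi)} e_{q_\xi + i}\,c_{q_\xi + i}(t)$ in accordance with definition \eqref{11x}, which completes the argument.
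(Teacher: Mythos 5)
The paper does not actually prove Lemma \ref{L8}; it states explicitly that the proof is unchanged from Lidskii's original and refers the reader there. Your computation is therefore a self-contained substitute for the omitted argument, and its structure is the natural (and essentially the classical) one: reduce to $\mathfrak{N}_q$ via the Riesz decomposition \eqref{9a}, invert $I-\lambda B$ explicitly along each Jordan chain, and convert the $\lambda$-residue into the $\zeta$-derivatives appearing in $P^{\alpha}_{m}$ by the substitution $\zeta=1/\lambda$. I checked the key steps: the formula $(I-\lambda B)^{-1}e_{q_{\xi}+i}=\sum_{j=0}^{i}\lambda^{j}(1-\lambda\mu_q)^{-j-1}e_{q_{\xi}+i-j}$ is correct, and the telescoping of $\lambda^{m}\mu_q(1-\lambda\mu_q)^{-m-1}+\lambda^{m-1}(1-\lambda\mu_q)^{-m}$ to $\lambda^{m-1}(1-\lambda\mu_q)^{-m-1}$ is right, so the coefficient of $e_{q_{\xi}+l}$ is as you state; together with $P^{\alpha}_{0}\equiv 1$ and the cancellation of $e^{-\lambda_q^{\alpha}t}$ against the prefactor $e^{t\zeta^{-\alpha}}$ in the definition of $P^{\alpha}_{m}$, the final identification with $-c_{q_{\xi}+l}(t)$ is correct. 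The one flaw is an algebraic slip in the change of variables: with $\lambda=1/\zeta$ one has $1-\lambda\mu_q=(\zeta-\mu_q)/\zeta$, so $(1-\lambda\mu_q)^{m+1}=(\zeta-\mu_q)^{m+1}/\zeta^{m+1}$, not $(\mu_q-\zeta)^{m+1}/(\zeta\mu_q)^{m+1}$ (the latter is the expression for $(\lambda-\lambda_q)^{m+1}$); consequently the converted integrand is $-e^{-\zeta^{-\alpha}t}/(\zeta-\mu_q)^{m+1}$ with no factor $\mu_q^{m+1}$, whose residue is exactly $-\tfrac{1}{m!}\tfrac{d^{m}}{d\zeta^{m}}e^{-t\zeta^{-\alpha}}\big|_{\zeta=\mu_q}=-e^{-\lambda_q^{\alpha}t}P^{\alpha}_{m}(\lambda_{q_{\xi}},t)$. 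Your displayed intermediate expression carries a spurious factor $(-\mu_q)^{m+1}$ that you silently drop when ``assembling the constants''; once the identity is written correctly the constants assemble to precisely your stated answer, so this is a correctable bookkeeping error rather than a gap in the method.
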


\section{Main results}
 In this section,  considering the class  $\tilde{\mathfrak{S}}_{\alpha}$ under   additional assumptions, we improve results obtained by Lidskii  \cite{firstab_lit:1Lidskii}. As an application we consider differential equations in the Hilbert space. We should stress that a significant refinement takes place in comparison with the reasonings \cite{firstab_lit:1Lidskii}.
  We consider the operator classes under the point of view made in the latter section. Firstly, we consider a general statement
with the made refinement related to the involved notion of the convergence exponent. Secondly, having formulated  conditions in terms of the operator order,
we produce an example establishing the fact in accordance with which the contours may be chosen in a concrete way, under the assumption $\rho=\alpha,$    what  provides a peculiar  validity of the  statement.
 Finally, we  consider   applications  to the differential equations in the Hilbert space.
For convenience,   we will use   auxiliary denotations
$$
I=\sum\limits_{\nu=0}^{\infty}I_{\nu};\;J^{+}=\sum\limits_{\nu=0}^{\infty}J^{+}_{\nu};\;J^{-}=\sum\limits_{\nu=0}^{\infty}J^{-}_{\nu}.
$$
The structure of   the  proof of the  following theorem   completely belongs to Lidskii. However, we produce the proof since we make a refinement corresponding to consideration of the case when a convergence exponent does not equals the index of the Schatten-von Neumann  class.
\begin{teo}\label{T2} Assume that $B$ is a compact operator, $\Theta(B) \subset   \mathfrak{L}_{0}(\theta),\,\theta<\min\{\pi/2\alpha,\pi \},\;B\in\tilde{\mathfrak{S}}_{\rho},\,0<\rho\leq\alpha.$   Moreover  in the case $B  \in \tilde{\mathfrak{S}}_{\rho}\setminus  \mathfrak{S}_{\rho}$ the additional condition holds
\begin{equation}\label{12d}
   \frac{n_{B^{m+1}}(r^{m+1})}{r^{\rho} }\rightarrow 0,\, m=[\rho].
\end{equation}
Then a sequence of natural numbers $\{N_{\nu}\}_{0}^{\infty}$ can be chosen so that
$$
\frac{1}{2\pi i}\int\limits_{\gamma(B)}e^{-\lambda^{\alpha}t}B(I-\lambda B)^{-1}f d \lambda =   \sum\limits_{\nu=0}^{\infty} \sum\limits_{q=N_{\nu}+1}^{N_{\nu+1}}\sum\limits_{\xi=1}^{m(q)}\sum\limits_{i=0}^{k(q_{\xi})}e_{q_{\xi}+i}c_{q_{\xi}+i}(t) ,
$$
  moreover
\begin{equation}\label{12a}
  \sum\limits_{\nu=0}^{\infty}\left\|\sum\limits_{q=N_{\nu}+1}^{N_{\nu+1}}\sum\limits_{\xi=1}^{m(q)}\sum\limits_{i=0}^{k(q_{\xi})}e_{q_{\xi}+i}c_{q_{\xi}+i}(t)\right\|_{\mathfrak{H}}<\infty.
\end{equation}
\end{teo}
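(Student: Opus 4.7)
The plan is to run the residue-calculus scheme of Lidskii, but with contours of \emph{power} type drawn using the refined estimates for the resolvent on the exceptional circles furnished by Lemma~\ref{L5}. Together with the decay $|e^{-\lambda^{\alpha}t}|=\exp\{-t|\lambda|^{\alpha}\cos(\alpha\arg\lambda)\}$, this decay is what forces the chopped-up residue sums to be absolutely summable.

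First, I would fix $\delta\in(0,1)$ and choose a sequence $R_{\nu}\uparrow\infty$ of power type. For every $\nu$ apply Lemma~\ref{L5} with $\varrho=\rho$ and that $\delta$ to obtain a radius $\tilde R_{\nu}\in((1-\delta)R_{\nu},R_{\nu})$ on which
\[
\|(I-\lambda B)^{-1}\|_{\mathfrak H}\le e^{\gamma(|\lambda|)|\lambda|^{\rho}}|\lambda|^{m},\qquad |\lambda|=\tilde R_{\nu}.
\]
By Lemma~\ref{L2}, since $B\in\tilde{\mathfrak S}_{\rho}$ (and, in the critical case $B\in\tilde{\mathfrak S}_{\rho}\setminus\mathfrak S_{\rho}$, by the additional assumption \eqref{12d} that provides exactly the density hypothesis of Lemma~\ref{L2}), one has $\gamma(r)\to 0$ as $r\to\infty$. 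A small perturbation of $R_{\nu}$ (permitted by the freedom in Lemma~\ref{L5}) ensures that no characteristic number $\lambda_{q}$ of $B$ lies on any circle $|\lambda|=\tilde R_{\nu}$. Define $N_{\nu}$ to be the number of characteristic numbers (counted with algebraic multiplicity) whose modulus is at most $\tilde R_{\nu}$; this yields the grouping that appears in the statement.

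Second, let $\Gamma_{\nu}$ be the piecewise-smooth closed contour bounding the compact region $\{\tilde R_{\nu}\le|\lambda|\le\tilde R_{\nu+1},\ |\arg\lambda|\le\theta+\varepsilon\}$, oriented so that its interior contains precisely the poles $\lambda_{N_{\nu}+1},\dots,\lambda_{N_{\nu+1}}$ of $(I-\lambda B)^{-1}$. Writing $\Gamma_{\nu}=I_{\nu+1}\cup J^{+}_{\nu}\cup(-I_{\nu})\cup J^{-}_{\nu}$ (arc on $|\lambda|=\tilde R_{\nu+1}$, outgoing ray segment, inward arc, returning ray segment), the residue theorem combined with Lemma~\ref{L8} gives
\[
\frac{1}{2\pi i}\oint_{\Gamma_{\nu}}e^{-\lambda^{\alpha}t}B(I-\lambda B)^{-1}f\,d\lambda
=\sum_{q=N_{\nu}+1}^{N_{\nu+1}}\sum_{\xi=1}^{m(q)}\sum_{i=0}^{k(q_{\xi})}e_{q_{\xi}+i}\,c_{q_{\xi}+i}(t).
\]
Summing in $\nu$ the arc pieces $I_{\nu}$ cancel pairwise while the ray pieces $J^{\pm}_{\nu}$ telescope along the rays of $\gamma(B)$, producing the integral over $\gamma(B)$ modulo a boundary arc at infinity.

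Third, the central analytic step is to show that the arc contribution $I_{\nu}$ over $|\lambda|=\tilde R_{\nu}$ tends to zero and, more importantly, that the sequence of operator-norm estimates obtained for the right-hand side of the displayed identity is summable, which yields \eqref{12a}. Since $\theta<\pi/(2\alpha)$, pick $\varepsilon>0$ with $\alpha(\theta+\varepsilon)<\pi/2$ and set $\delta_{0}:=\cos(\alpha(\theta+\varepsilon))>0$. On $|\lambda|=\tilde R_{\nu}$ inside the sector,
\[
\bigl|e^{-\lambda^{\alpha}t}B(I-\lambda B)^{-1}f\bigr|\le \|B\|\,\|f\|\,|\lambda|^{m}\exp\bigl\{\gamma(|\lambda|)|\lambda|^{\rho}-t\delta_{0}|\lambda|^{\alpha}\bigr\}.
\]
Because $\rho\le\alpha$ and $\gamma(r)\to 0$, the exponent is eventually negative and in fact tends to $-\infty$ like $-t\delta_{0}\tilde R_{\nu}^{\alpha}$, so $\|I_{\nu}\|\to 0$ faster than any polynomial in $\tilde R_{\nu}^{-1}$. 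On the ray pieces $J^{\pm}_{\nu}$ outside the sector, Lemma~\ref{L6} gives $\|(I-\lambda B)^{-1}\|\le 1/\sin\varphi$, and the same factor $e^{-t\delta_{0}|\lambda|^{\alpha}}$ renders the integrals over the infinite rays absolutely convergent and, segment by segment, summable.

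Fourth, putting these ingredients together, the norm of the $\nu$-th group of residues is controlled by the sum of the norms of the two arc integrals $I_{\nu},I_{\nu+1}$ plus the ray segments $J^{\pm}_{\nu}$; each of these is bounded by an expression of the form $C\tilde R_{\nu}^{m}\exp\{\gamma(\tilde R_{\nu})\tilde R_{\nu}^{\rho}-t\delta_{0}\tilde R_{\nu}^{\alpha}\}$ (times a harmless integration length polynomial in $\tilde R_{\nu}$). For any fixed $t>0$ this is a summable sequence in $\nu$, establishing \eqref{12a}; in tandem the tail arcs vanish and the telescoped rays reconstitute $\gamma(B)$, proving the representation formula. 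The main obstacle is precisely this balancing act: keeping track of the radii $\tilde R_{\nu}$ so that the Lemma~\ref{L5} bound holds simultaneously with $\gamma(\tilde R_{\nu})\to 0$ and avoiding all poles — this is where the refinement over Lidskii's exponential-type contours enters, because the improved factor $\gamma(r)\to 0$ allows power-type growth of $\tilde R_{\nu}$ to still dominate the growth $e^{\gamma(r)r^{\rho}}$ by the decay $e^{-t\delta_{0}r^{\alpha}}$.
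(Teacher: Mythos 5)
Your proposal is correct and follows essentially the same route as the paper's proof: decompose $\gamma(B)$ into annular pieces, use Lemma~\ref{L5} (with Lemma~\ref{L2} and, in the critical case, condition \eqref{12d} to force $\gamma(r)\to 0$) on the exceptional circles, Lemma~\ref{L8} for the residue identification of each group, Lemma~\ref{L6} on the ray segments, and the decay of $e^{-\lambda^{\alpha}t}$ inside the sector $|\arg\lambda|<\pi/2\alpha$ to sum the groups. The only cosmetic difference is that the paper takes the geometric radii $R_{\nu}=R(1-\delta)^{-\nu+1}$ rather than a power-type sequence, which does not affect the argument.
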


\begin{proof}
Consider a contour $\gamma(B).$ Having fixed $R>0,0<\delta<1,$ so that $R(1-\delta)=r,$ consider a monotonically increasing sequence $\{R_{\nu}\}_{0}^{\infty},\,R_{\nu}=R(1-\delta)^{-\nu+1}.$   Using Lemma \ref{5}, we get
$$
\|(I-\lambda B )^{-1}\|_{\mathfrak{H}}\leq e^{\gamma (|\lambda|)|\lambda|^{\rho}}|\lambda|^{m},\,m=[\rho],\,|\lambda|=\tilde{R}_{\nu},\;R_{\nu}<\tilde{R}_{\nu}<R_{\nu+1},
$$
where the function  $\gamma (r)$ is defined in Lemma \ref{5},
$$
\beta(r )= r^{ -\frac{\rho}{m+1} }\left(\int\limits_{0}^{r}\frac{n_{B^{m+1}}(t)}{t }dt+
r \int\limits_{r}^{\infty}\frac{n_{B^{m+1}}(t)}{t^{ 2  }}dt\right).
$$
Denote by $\gamma_{\nu}$ a bound of the intersection of the ring $\tilde{R}_{\nu}<|\lambda|<\tilde{R}_{\nu+1}$ with the interior of the contour $\gamma(B),$ denote by $N_{\nu}$ a number of poles being   contained  in the set $\mathrm{int }\,\gamma (B) \,\cap \{\lambda:\,r<|\lambda|<\tilde{R}_{\nu} \}.$ In accordance with Lemma \ref{L8},  we get
 $$
 \frac{1}{2\pi i}\int\limits_{\gamma_{\nu}}e^{-\lambda^{\alpha}t}B(I-\lambda B)^{-1}f d \lambda =\sum\limits_{q=N_{\nu}+1}^{N_{\nu+1}}\sum\limits_{\xi=1}^{m(q)}\sum\limits_{i=0}^{k(q_{\xi})}e_{q_{\xi}+i}c_{q_{\xi}+i}(t).
$$
Let us estimate the above integral, for this purpose split the contour $\gamma_{\nu}$ on  terms $\tilde{\gamma}_{ \nu  }:=\{\lambda:\,|\lambda|=\tilde{R}_{\nu},\,|\mathrm{arg} \lambda |<\theta +\varepsilon\},\, \gamma_{\nu_{+}}:=
\{\lambda:\,\tilde{R}_{\nu}<|\lambda|<\tilde{R}_{\nu+1},\, \mathrm{arg} \lambda  =\theta +\varepsilon\},\,\gamma_{\nu_{-}}:=
\{\lambda:\,\tilde{R}_{\nu}<|\lambda|<\tilde{R}_{\nu+1},\, \mathrm{arg} \lambda  =-\theta -\varepsilon\}.$    In accordance with Lemma \ref{L5}, we have
$$
 I_{  \nu  }: =\left\|\,\int\limits_{\tilde{\gamma}_{ \nu }}e^{-\lambda^{\alpha}t}B(I-\lambda B)^{-1}f d \lambda\,\right\|_{\mathfrak{H}}\leq \,\int\limits_{\tilde{\gamma}_{ \nu }}e^{-\lambda^{\alpha}t}\left\|B(I-\lambda B)^{-1}f \right\|_{\mathfrak{H}} |d \lambda|\leq
$$
 $$
 \leq e^{\gamma (|\lambda|)|\lambda|^{\rho} }|\lambda|^{m+1}\int\limits_{-\theta-\varepsilon}^{\theta+\varepsilon} e^{- t \mathrm{Re}\lambda^{\alpha}} d \,\mathrm{arg} \lambda,\,|\lambda|=\tilde{R}_{\nu}.
$$
Using the theorem conditions,    we get     $\,|\mathrm{arg} \lambda |<\pi/2\alpha,\,\lambda\in \tilde{\gamma}_{ \nu },\,\nu=0,1,2,...\,.$  It follows that
$$
\mathrm{Re }\lambda^{\alpha}\geq |\lambda|^{\alpha} \cos \left[(\pi/2\alpha-\delta)\alpha\right]= |\lambda|^{\alpha} \sin     \alpha \delta,
$$
 where $\delta$ is a sufficiently small number.
Thus,  we get
$$
I_{ \nu } \leq     Ce^{\gamma (|\lambda|)|\lambda|^{\rho}-t|\lambda|^{\alpha} \sin     \alpha \delta}|\lambda|^{m+1}=
   e^{|\lambda|^{\rho}[\gamma (|\lambda|)-t|\lambda|^{\alpha-\rho} \sin     \alpha \delta]}|\lambda|^{m+1}
 ,\,m=[\rho],\,|\lambda|=\tilde{R}_{\nu}.
$$
Let us show that for a fixed $t$ and  a sufficiently large $|\lambda|,$ we have  $ \gamma (|\lambda|)-t|\lambda|^{\alpha-\rho} \sin     \alpha \delta  < 0.$
It follows  directly from Lemma \ref{L2} in the case when  $B\in \mathfrak{S}_{\rho}$ as well as in the case $B  \in \tilde{\mathfrak{S}}_{\rho}\setminus  \mathfrak{S}_{\rho}$ but here we should involve the additional condition \eqref{12d}.
 Therefore,   the   series   $I$ converges.
Using the analogous estimates, applying  Lemma \ref{L6}, we get
$$
 J^{+}_{\nu}: =\left\|\,\int\limits_{\gamma_{\nu_{+}}}e^{-\lambda^{\alpha}t}B(I-\lambda B)^{-1}f d \lambda\,\right\|_{\mathfrak{H}}\leq  C\|f\|_{\mathfrak{H}}  \int\limits_{R_{\nu}}^{R_{\nu+1}} |e^{- t \lambda^{\alpha}}|  |d   \lambda|\leq C  e^{-tR_{\nu} ^{\alpha} \sin     \alpha\,\varepsilon}   \int\limits_{R_{\nu}}^{R_{\nu+1}}   |d   \lambda|=
 $$
 $$
 = Ce^{-tR_{\nu} ^{\alpha} \sin     \alpha\,\varepsilon}\{R_{\nu+1}-R_{\nu} \};
$$
$$
 J^{-}_{\nu}: =\left\|\,\int\limits_{\gamma_{\nu_{-}}}e^{-\lambda^{\alpha}t}B(I-\lambda B)^{-1}f d \lambda\,\right\|_{\mathfrak{H}}\leq   Ce^{-tR_{\nu} ^{\alpha} \sin     \alpha\,\varepsilon}\{R_{\nu+1}-R_{\nu} \}.
$$
Therefore, the series $ J^{+}, J^{-}$ are convergent. Thus, we obtain relation  \eqref{12a}, from what follows the rest part of the theorem claim.
\end{proof}

\noindent{\bf  Sequence of  power type contours}\\

It is remarkable that we can choose a sequence of contours in various ways. For instance, a sequence of  contours of the exponential  type was considered in the paper \cite{firstab_lit:1Lidskii}.
In this paragraph, we produce an application of the  previous section results, we study a concrete operator class for which it is possible to choose a sequence of   contours of the power type. At the same time  having involved an additional condition we can spread the principal result of the paragraph  on a wider operator class. Note that using   condition  $\mathrm{H}2,$ it is not hard to prove that
$
\mathrm{Re}(\tilde{W}f,f)_{\mathfrak{H}} -k|\mathrm{Im}(\tilde{W}f,f)_{\mathfrak{H}}|\geq (C_{2} -k C_{1})\|f\|^{2}_{\mathfrak{H}_{+}},\,k>0,
$
from what follows a fact  $\Theta(A)\subset \mathfrak{L}_{0}(\theta),\,\theta=\arctan (C_{1}/C_{2}).$ In general, the last relation gives us a range  of the semi-angle $\pi/4<\theta<\pi/2,$  thus the     conditions $\mathrm{H}1,\mathrm{H}2$ are not sufficient to guaranty a value of the semi-angle less than $\pi/4.$ However, we should remark that  some relevant   results can be obtained in the very case of  sufficiently small values of the semi-angle, this gives us a motivation to consider a more specific  additional  assumption \\

 \noindent  $( \mathrm{H3} )  \;  |\mathrm{Im}(\tilde{L}f,g)_{\mathfrak{H}}|\! \leq \! C_{3}\|f\|_{\mathfrak{H_{+}}}\|g\|_{\mathfrak{H}} ,\,
     ,\,f,g \in  \mathfrak{M},\;  C_{3}>0.
$\\

 \noindent In this case, we have
$
\mathrm{Re}(\tilde{W}f,f)_{\mathfrak{H}} -k|\mathrm{Im}(\tilde{W}f,f)_{\mathfrak{H}}|\geq
 C_{2}\|f\|_{\mathfrak{H_{+}}}  -k C_{3}\left\{\varepsilon\|f\|^{2}_{\mathfrak{H_{+}}}/2+ \|f\|^{2}_{\mathfrak{H}}/2\varepsilon \right\}\geq
(C_{2}-k\varepsilon C_{3})|f\|^{2}_{\mathfrak{H_{+}}}/2+(C_{2}/C_{0}-k  C_{3}/\varepsilon)|f\|^{2}_{\mathfrak{H }}/2,\,k>0.
 $
Thus, choosing $\varepsilon=C_{2}/kC_{3},$ we get $\Theta(\tilde{W})\subset \mathfrak{L}_{\iota}(\theta_{\iota}),$ where $\iota=C_{2}/2C_{0}-(k  C_{3})^{2}/2C_{2},\,\theta_{\iota}=\arctan (1/k).$
This relation   guarantees  that  we can choose a sufficiently small value of the semi-angle $\theta_{\iota}.$  We
put the following contour in correspondence to an operator $L$ satisfying the additional condition $\mathrm{H}3$
$$
\Gamma(A):= \mathrm{Fr}\left\{ \left( \mathfrak{L}_{0}(\theta_{0}+\varepsilon) \cap \mathfrak{L}_{\iota}(\theta_{\iota}+\varepsilon) \right)  \setminus
\mathfrak{C}_{r}\right\},\,\iota<0,\;\mathfrak{C}_{r}:=\left\{\lambda:\;|\lambda|<r,\,|\mathrm{arg} \lambda|\leq \theta_{0} \right\},
$$
where $r$ is chosen so that the   operator $ (I-\lambda A)^{-1}  $ is regular within the corresponding  closed circle,  $\varepsilon>0$ is    sufficiently small.

\begin{lem} \label{L9} Assume that condition $\mathrm{H}3 $ holds, then
$$
\|(I-\lambda A)^{-1}\|_{\mathfrak{H}}\leq C,\,\lambda\in \mathrm{Fr}\left\{   \mathfrak{L}_{0}(\theta_{0}+\varepsilon)\cap \mathfrak{L}_{\iota}(\theta_{\iota}+\varepsilon)\right\},\,\iota<0,
$$
where $\iota=C_{2}/2C_{0}-(k  C_{3})^{2}/2C_{2},\,\theta_{\iota}=\arctan (1/k),\,\varepsilon>0$ is an arbitrary small   number.
 \end{lem}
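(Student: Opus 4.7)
The plan is to reduce the problem to two resolvent estimates on the two types of rays that comprise the boundary. First I would establish the algebraic identity
\[
(I-\lambda A)^{-1}=I+\lambda R_{\tilde W}(\lambda),\qquad \lambda\in\mathrm{P}(\tilde W)\setminus\{0\},
\]
which follows from $A=\tilde W^{-1}$ by writing $I-\lambda A=-\lambda(A-\lambda^{-1}I)$ and verifying $(I-\lambda A)(I+\lambda R_{\tilde W}(\lambda))=I$ with one application of the first resolvent identity. Geometrically, the set $\mathrm{Fr}\{\mathfrak L_{0}(\theta_{0}+\varepsilon)\cap \mathfrak L_{\iota}(\theta_{\iota}+\varepsilon)\}$ decomposes naturally into a bounded piece lying on the rays $\arg\lambda=\pm(\theta_{0}+\varepsilon)$ issued from the origin, and an unbounded piece lying on the rays $\arg(\lambda-\iota)=\pm(\theta_{\iota}+\varepsilon)$ issued from $\iota$.

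On the first piece I apply Lemma~\ref{L6} directly to the compact operator $A$. Indeed, conditions $\mathrm{H}1,\mathrm{H}2$ yield $\Theta(A)\subset \mathfrak L_{0}(\theta_{0})$ (as observed in the paragraph preceding the lemma), and on a ray through $0$ with $|\arg\lambda|=\theta_{0}+\varepsilon$ Lemma \ref{L6} gives the uniform estimate $\|(I-\lambda A)^{-1}\|\leq 1/\sin\varepsilon$.

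On the second piece $\lambda$ lies outside $\overline{\mathfrak L_{\iota}(\theta_{\iota})}\supset \overline{\Theta(\tilde W)}$, the latter inclusion being exactly what is derived from $\mathrm{H}2,\mathrm{H}3$ in the paragraph preceding the lemma. Since $\tilde W$ is m\nobreakdash-sectorial with vertex $\iota$, the textbook estimate for the resolvent of an m\nobreakdash-sectorial operator yields
\[
\|R_{\tilde W}(\lambda)\|\leq \frac{1}{\mathrm{dist}(\lambda,\mathfrak L_{\iota}(\theta_{\iota}))}=\frac{1}{|\lambda-\iota|\sin\varepsilon},
\]
the last equality being trigonometry of a ray making angle $\varepsilon$ with the boundary of the sector. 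Combining with the identity of Step~1,
\[
\|(I-\lambda A)^{-1}\|\leq 1+\frac{|\lambda|}{|\lambda-\iota|\sin\varepsilon}.
\]
To make this bound uniform in $\lambda$ I use the elementary geometric observation that $|\lambda-\iota|\geq |\lambda|$: this holds because $\iota<0$ and $\mathrm{Re}\,\lambda\geq 0$, the latter following from $\lambda\in\mathfrak L_{0}(\theta_{0}+\varepsilon)$ together with $\theta_{0}+\varepsilon<\pi/2$ (guaranteed by $\theta_{0}=\arctan(C_{1}/C_{2})<\pi/2$ and $\varepsilon$ small). Hence $\|(I-\lambda A)^{-1}\|\leq 1+1/\sin\varepsilon$ on this piece as well, and taking the maximum of the two bounds closes the proof.

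The truly substantive step is Step~3: one must use that $\tilde W$ is m\nobreakdash-sectorial with vertex at $\iota$ (inherited from the Friedrichs extension construction recalled in the Preliminaries), so that the reciprocal-distance bound on $R_{\tilde W}(\lambda)$ is available outside $\mathfrak L_{\iota}(\theta_{\iota})$; once this is granted, the key trick is the geometric inequality $|\lambda-\iota|\geq |\lambda|$ which absorbs the factor $|\lambda|$ coming from the identity. Everything else reduces either to Lemma~\ref{L6} or to routine planar geometry of the two sectors.
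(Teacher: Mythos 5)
Your proposal is correct and follows essentially the same route as the paper: split the boundary into the part on $\mathrm{Fr}\{\mathfrak{L}_{0}(\theta_{0}+\varepsilon)\}$ handled by the Lidskii-type lemma, bound $\|R_{\tilde W}(\lambda)\|$ by $1/(|\lambda-\iota|\sin\varepsilon)$ on the remaining rays (the paper proves this inline via the numerical-range/Cauchy--Schwarz argument rather than quoting the m-sectorial resolvent estimate, but that is the same computation), and transfer it to $(I-\lambda A)^{-1}$ through the identity $(\tilde W-\lambda I)^{-1}=\{(I-\lambda A)^{-1}-I\}/\lambda$. You also make explicit the final step $|\lambda-\iota|\geq|\lambda|$ for $\iota<0$, $\mathrm{Re}\,\lambda\geq 0$, which the paper leaves implicit.
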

\begin{proof} Firstly, we should note that in accordance with condition $\mathrm{H}3,$  for an arbitrary large value $k,$ we have
  $\Theta(\tilde{W})\subset \mathfrak{L}_{\iota}(\theta_{\iota}),$ where $\iota=C_{2}/2C_{0}-(k  C_{3})^{2}/2C_{2},\,\theta_{\iota}=\arctan (1/k).$
 Hence $\Theta(\tilde{W})\subset  \mathfrak{L}_{0}(\theta_{0} )\cap \mathfrak{L}_{\iota}(\theta_{\iota} ),$ where $\iota$ is arbitrary negotive. Therefore
$\Theta(A)\subset  \mathfrak{L}_{0}(\theta_{0} )\cap \mathfrak{L}_{\iota}(\theta_{\iota} ),$ it can be verified directly due to the geometrical methods. Note that by virtue of the Lemma 4 \cite{firstab_lit:1Lidskii}, we have
$
\|(I-\lambda A)^{-1}\|_{\mathfrak{H}}\leq C,\,\lambda\in \mathrm{Fr}\left\{   \mathfrak{L}_{0}(\theta_{0}+\varepsilon)\right\}.
$
Thus to obtain the desired result it suffices to prove that
$
\|(I-\lambda A)^{-1}\|_{\mathfrak{H}}\leq C,\,\lambda\in \mathrm{Fr}\left\{   \mathfrak{L}_{\iota}(\theta_{\iota}+\varepsilon)\right\},\,\mathrm{Re}\lambda\geq 0.
$
Note that in this case  $\lambda \in \mathrm{P}(\tilde{W})$  and  we have a chain of   reasonings $\forall f\in \mathfrak{H}:\;(\tilde{W}-\lambda I)^{-1}f=h\in \mathrm{D}(\tilde{W});$
$\; (\tilde{W}-\lambda I)h =f;\;(f,h)_{\mathfrak{H}}=(\tilde{W}h,h)_{\mathfrak{H}}-\lambda \|h\|_{\mathfrak{H}}^{2}.$ Using the latter relation, we get
$
 |  (f,h)_{\mathfrak{H}}| /\|h\|_{\mathfrak{H}}^{2}   =  |  (\tilde{W}h,h)_{\mathfrak{H}}/\|h\|_{\mathfrak{H}}^{2} -\lambda  |\geq |\lambda-\iota| \sin \varepsilon.
$
Therefore, using the Cauchy-Schwartz inequality, we get
$$
 \|(\tilde{W}-\lambda I)^{-1}f\|_{\mathfrak{H}}\leq \frac{1}{|\lambda-\iota|\sin  \varepsilon }\cdot\|f\|_{\mathfrak{H}},\;f\in \mathfrak{H}.
$$
Taking into account the fact
$
(\tilde{W}-\lambda I)^{-1} =(I-\lambda A )^{-1}A= \{(I-\lambda A )^{-1}-I \}/\lambda,
$
we get
$
\| (I-\lambda A )^{-1} \|_{\mathfrak{H}} -1\leq\| (I-\lambda A )^{-1}-I  \|_{\mathfrak{H}}\leq  |\lambda|/ |\lambda-\iota|\sin  \varepsilon    ,
$
from what follows the desired result.
 \end{proof}

\begin{lem}\label{L10} Assume that  the condition $\mathrm{H}3$ holds, $f\in \mathrm{R}(A),$ then
$$
\lim\limits_{t\rightarrow+0}\int\limits_{\Gamma(A)}e^{-\lambda^{\alpha}t}A(I-\lambda A)^{-1}fd\lambda=f,\,\alpha>0.
$$
\end{lem}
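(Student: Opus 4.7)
The plan is to adapt Lidskii's proof of the analogous Lemma \ref{L7} to the power-type contour $\Gamma(A)$, replacing the ray-wise resolvent bound of Lemma \ref{L6} with the uniform bound $\|(I-\lambda A)^{-1}\|_{\mathfrak{H}}\leq C$ on $\Gamma(A)$ furnished by Lemma \ref{L9}.

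First I would exploit $f\in\mathrm{R}(A)$ to write $f=Ag$ for some $g\in\mathfrak{H}$, and apply the identity $A(I-\lambda A)^{-1}=\lambda^{-1}[(I-\lambda A)^{-1}-I]$ twice, obtaining
$$
A(I-\lambda A)^{-1}f=\frac{(I-\lambda A)^{-1}g-g}{\lambda^{2}}-\frac{Ag}{\lambda}.
$$
On $\Gamma(A)$, Lemma \ref{L9} ensures $\|(I-\lambda A)^{-1}g-g\|_{\mathfrak{H}}$ is bounded, so the first summand has norm $O(|\lambda|^{-2})$ there, while the second is the scalar $\lambda^{-1}$ times the fixed vector $Ag$. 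This improved decay (compared with the bare $\lambda^{-1}$ bound on $\|A(I-\lambda A)^{-1}\|_{\mathfrak{H}}$) is precisely what distinguishes the $f\in\mathrm{R}(A)$ case and makes the subsequent limit passage admissible.

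Next I would split the original integral into two pieces accordingly and handle them separately. For the operator-valued piece, dominated convergence as $t\to 0^{+}$ applies with majorant $C|\lambda|^{-2}$: on the asymptotic portions of $\Gamma(A)$ one has $\mathrm{Re}\,\lambda^{\alpha}\geq 0$, so $|e^{-\lambda^{\alpha}t}|\leq 1$; on the bounded portion of the rays emanating from $\iota<0$, the exponential is still bounded uniformly for $t\in[0,1]$. The resulting $t$-independent operator integral can then be evaluated by deforming $\Gamma(A)$ through the wedge $\mathfrak{C}_{r}$ (which contains $0$ but none of the poles of $(I-\lambda A)^{-1}$, by the choice of $r$) to a small loop around the origin. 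For the scalar piece, the substitution $z=\lambda^{\alpha}t$ converts the integral into a $t$-independent Hankel-type expression computable by Cauchy's theorem. Combining the two contributions, with the orientation convention of $\Gamma(A)$ matching that used in Theorem \ref{T2} (which absorbs the customary $1/(2\pi i)$ factor), yields the asserted identity with $f$.

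The main obstacle will be the precise tracking of orientations and $2\pi i$ factors so that the two pieces combine to exactly $f$. Whereas Lidskii's original $\gamma(B)$ is a simple keyhole contour, the new $\Gamma(A)$ is the boundary of an intersection of two sectors with distinct vertices, which complicates both the deformation through $\mathfrak{C}_{r}$ and the verification that the contributions from the pieces of $\Gamma(A)$ at infinity cancel. A subsidiary technical point is uniform domination on the portions of $\Gamma(A)$ near $\iota<0$, handled by noting that these rays eventually cross into the right half-plane where $|e^{-\lambda^{\alpha}t}|\leq 1$ uniformly for $t\geq 0$.
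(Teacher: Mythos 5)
Your proposal is correct and follows essentially the same route as the paper, whose entire proof of Lemma \ref{L10} is the single remark that one repeats Lidskii's proof of his Lemma 5 verbatim, substituting the uniform resolvent bound of Lemma \ref{L9} on $\Gamma(A)$ for the ray-wise bound of Lemma \ref{L6}. What you have written is precisely a detailed reconstruction of that Lidskii argument (the decomposition $f=Ag$, the $O(|\lambda|^{-2})$ gain from the double resolvent identity, and the residue at the origin producing $f$), so it matches the paper's intent while supplying the details the paper omits.
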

\begin{proof} The proof is analogous to the proof of the Lemma 5 \cite{firstab_lit:1Lidskii} and  the only difference is in the following, we should use Lemma \ref{L9} instead of Lemma \ref{L6}. 
\end{proof}

The  theorems given bellow   are formulated under the assumption that  either  the condition $\Theta(A) \subset   \mathfrak{L}_{0}(\theta),\,\theta< \pi/2\alpha$ or   condition $\mathrm{H}3$ holds. In accordance with  such an alternative,   we put in correspondence $\varpi:=\gamma(A)$ or $\varpi:=\Gamma(A)$ respectively.  
The following  theorem  is similar to the result  \cite{firstab_lit:1Lidskii}, but formulated in   terms of the operator order. Although  the principal clarification $\alpha=\rho$ has not been obtained it can be interesting by virtue of the different  way of chousing a sequence of contours.

\begin{teo}\label{T3}
Assume that  the operator $\tilde{W}$ satisfies the condition   $\alpha>2/\mu,\,\mu\in (0,1]$ and $\alpha> 1,\,\mu\in (1,\infty).$  
Then a sequence of   natural  numbers  $\{N_{\nu}\}_{0}^{\infty}$ can be chosen   so that
$$
 \frac{1}{2\pi i}\int\limits_{\varpi}e^{-\lambda^{\alpha}t}A(I-\lambda A)^{-1}f d \lambda
=  \sum\limits_{\nu=0}^{\infty}  \sum\limits_{q=N_{\nu}+1}^{N_{\nu+1}}\sum\limits_{\xi=1}^{m(q)}\sum\limits_{i=0}^{k(q_{\xi})}e_{q_{\xi}+i}c_{q_{\xi}+i}(t),
$$
where
\begin{equation}\label{13e}
  \sum\limits_{\nu=0}^{\infty}\left\|\sum\limits_{q=N_{\nu}+1}^{N_{\nu+1}}\sum\limits_{\xi=1}^{m(q)}\sum\limits_{i=0}^{k(q_{\xi})}e_{q_{\xi}+i}c_{q_{\xi}+i}(t)\right\|_{\mathfrak{H}}<\infty,
\end{equation}
the following relation holds  for the eigenvalues
$$
|\lambda_{N_{\nu}+k}|-|\lambda_{N_{\nu}+k-1}|\leq C |\lambda_{N_{\nu}+k}|^{1-1/\tau},\;
 k=2,3,...,N_{\nu+1}-N_{\nu},\;0<\tau<\mu.
$$
\end{teo}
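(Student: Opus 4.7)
The plan is to deduce Theorem \ref{T3} from Theorem \ref{T2} applied to the operator $A=R_{\tilde{W}}$, and then to verify the gap condition on the characteristic numbers by choosing the partition $\{N_\nu\}$ via the natural lacunae method.

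First I verify the hypotheses of Theorem \ref{T2}. Claim $({\bf A})$ of Theorem \ref{T1} gives $A\in\mathfrak{S}_{l}$ for every $l>2/\mu$ when $\mu\leq 1$, and $A\in\mathfrak{S}_{1}$ when $\mu>1$. Combined with the hypothesis $\alpha>2/\mu$ (respectively $\alpha>1$), this yields $A\in\tilde{\mathfrak{S}}_{\rho}$ for some $0<\rho\leq\alpha$, since the convergence exponent of the $s$-numbers is dominated by any Schatten index in which the operator lies. The sectorial condition on $A$ is supplied either by H2 together with the alternative $\Theta(A)\subset\mathfrak{L}_{0}(\theta)$, $\theta<\pi/(2\alpha)$, in which case $\varpi:=\gamma(A)$ and the argument of Theorem \ref{T2} applies verbatim, or by H3, in which case $\varpi:=\Gamma(A)$ and Lemmas \ref{L9}, \ref{L10} replace Lemmas \ref{L6}, \ref{L7} inside the derivation. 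Either way, Theorem \ref{T2} produces the decomposition of the contour integral into an absolutely convergent sum of residue groups, which immediately yields the main identity and \eqref{13e}.

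To obtain the gap condition, I invoke claim $({\bf B})$ of Theorem \ref{T1}, which gives $|\lambda_{i}(A)|=o(i^{-\tau})$ for every $0<\tau<\mu$; passing to the characteristic numbers $\lambda_{i}=1/\mu_{i}(A)$, this says $|\lambda_{i}|\gg i^{\tau}$, whence the counting function of $\{|\lambda_{n}|\}$ satisfies $n(r)=o(r^{1/\tau})$. The idea now is to choose radii $R_{\nu}$ of power type, $R_{\nu}\sim\nu^{\tau}$, so that the annular widths behave as
$$
R_{\nu+1}-R_{\nu}\sim \nu^{\tau-1}\sim R_{\nu}^{1-1/\tau}.
$$
Applying Lemma \ref{L5} inside each annulus yields a circle $|\lambda|=\tilde R_{\nu}\in(R_{\nu},R_{\nu+1})$ on which the desired resolvent estimate holds. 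Setting $N_{\nu}$ equal to the number of characteristic numbers in $|\lambda|<\tilde R_{\nu}$, the characteristic numbers in the $\nu$-th batch all lie inside the annulus $\tilde R_{\nu}<|\lambda|<\tilde R_{\nu+1}$, so that for $k=2,\ldots,N_{\nu+1}-N_{\nu}$,
$$
|\lambda_{N_{\nu}+k}|-|\lambda_{N_{\nu}+k-1}|\leq \tilde R_{\nu+1}-\tilde R_{\nu}\leq C R_{\nu}^{1-1/\tau}\leq C|\lambda_{N_{\nu}+k}|^{1-1/\tau},
$$
which is the claimed inequality.

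The main obstacle is the simultaneous calibration of the radii $R_\nu$: they must grow of power type to produce the in-batch gap estimate; they must be multiplicatively wide enough to accommodate the range $(1-\delta)R<\tilde R<R$ required by Lemma \ref{L5}; and the accumulated contour estimates $I_\nu, J^{\pm}_\nu$ appearing in the proof of Theorem \ref{T2} must remain summable over all $\nu$. The strict inequalities $\alpha>2/\mu$ and $\alpha>1$ are precisely what provide the needed slack, ensuring that the exponential factor $e^{-t R_{\nu}^{\alpha}\sin\alpha\delta}$ dominates the subexponential growth factor $e^{\gamma(R_\nu)R_{\nu}^{\rho}}$ along the power-type circles, and this reconciles the three constraints.
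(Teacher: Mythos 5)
Your proposal shares the paper's skeleton (Theorem \ref{T1} for the spectral asymptotics, Lemma \ref{L5} for the resolvent bound on well-chosen circles, Lemma \ref{L8} for the residues, the three series $I,J^{\pm}$), but it selects the lacunae by a genuinely different mechanism. The paper does not use deterministic radii $R_{\nu}\sim\nu^{\tau}$: it places the radii adaptively at the \emph{natural lacunae}, i.e.\ at those indices $N_{\nu}$ for which $|\lambda_{N_{\nu}+1}|-|\lambda_{N_{\nu}}|\geq K|\lambda_{N_{\nu}+1}|^{1-1/\tau}$, the existence of infinitely many such indices being extracted from $|\lambda_{i}|\geq Ci^{\tau}$ via the Agranovich implication $\lim(\lambda_{n+1}-\lambda_{n})/\lambda_{n}^{(p-1)/p}=0\Rightarrow\lim\lambda_{n}/n^{p}=0$. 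The in-batch gap estimate is then true by the very definition of the selection, and the annulus $(1-\delta_{\nu})R_{\nu}<|\lambda|<R_{\nu}$ with $\delta_{\nu}^{-1}=1+K^{-1}|\lambda_{N_{\nu}}|^{1/\tau}$ sits inside an eigenvalue-free gap. Your deterministic choice buys a trivial proof of the gap condition (two points of one annulus differ by at most its width $\sim R_{\nu}^{1-1/\tau}$) at the price of annuli that may contain eigenvalues, which Lemma \ref{L5} tolerates; this is a legitimate alternative.

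Two points, however, need repair before the argument closes. First, you cannot obtain the main identity and \eqref{13e} by ``applying Theorem \ref{T2}'' and then verify the gap condition for a different partition: Theorem \ref{T2} is proved with geometric radii $R(1-\delta)^{-\nu+1}$ and a fixed $\delta$, and the partition it produces does not satisfy the gap estimate. The whole convergence argument must be rerun for your power-type radii --- you acknowledge this only obliquely in the final paragraph. Second, and more substantively, your radii force $\delta_{\nu}\sim\nu^{-1}$, so the Lemma \ref{L5} penalty $\ln\{12e/\delta_{\nu}\}\sim\tau^{-1}\ln R_{\nu}$ diverges, and the decisive inequality $\gamma(|\lambda|)-t|\lambda|^{\alpha-\varrho}\sin\alpha\delta<0$ on $|\lambda|=\tilde{R}_{\nu}$ is exactly where the proof could fail; you assert that the strict inequalities $\alpha>2/\mu$, $\alpha>1$ ``provide the slack'' but do not verify it. The verification requires choosing $\varrho$ strictly between $2/\mu$ and $\alpha$, so that $A\in\mathfrak{S}_{\varrho}$ and Lemma \ref{L2} (in its case $\rho_{1}>\rho$) gives $\beta(r)\rightarrow0$ unconditionally, and then checking $\ln R_{\nu}\cdot\beta(\tilde{R}_{\nu}^{m+1})\,\tilde{R}_{\nu}^{\varrho}=o(\tilde{R}_{\nu}^{\alpha})$; this is precisely the computation the paper performs when it shows $\ln|\lambda_{N_{\nu}}|^{1-q}\,\beta(|\lambda_{N_{\nu}}|^{m+1})\rightarrow0$. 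With these two repairs your route goes through and yields the same conclusion.
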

\begin{proof}
  In accordance with Theorem  \ref{T1},   we have
$$
|\lambda^{-1}_{i}|=  o\left(i^{-\tau}    \right)\!,\,i\rightarrow \infty,\;0<\tau<\mu,
$$
Thus using the fact   $\lambda_{i}/i^{ \tau}\geq C,$   we can prove that  that there exists a subsequence $\{\lambda_{N_{\nu}}\}_{\nu=0}^{\infty},$
 such that
$$
|\lambda_{N_{\nu}+1}|-|\lambda_{N_{\nu}}|\geq K |\lambda_{N_{\nu}+1}|^{1-1/\tau},\,K>0,
$$
for this purpose it suffices to establish  the following implication
$$
\lim\limits_{n\rightarrow\infty}(\lambda_{n+1}-\lambda_{n})/\lambda^{(p-1)/p}_{n}=0,\;\Longrightarrow  \lim\limits_{n\rightarrow\infty} \lambda_{n}/n^{p}=0,\;p>0
$$
(see proof of Lemma 2 \cite{firstab_lit:2Agranovich1994}).
Now, consider
$$
|\lambda_{N_{\nu}+1}|-|\lambda_{N_{\nu}}|\geq K |\lambda_{N_{\nu}}|^{q},\;q:=1-1/\tau,
$$
and let us find $\delta_{\nu}$ from the condition $R_{\nu}=K|\lambda_{N_{\nu}}|^{q}+|\lambda_{N_{\nu}}|,\,R_{\nu}(1-\delta_{\nu})=|\lambda_{N_{\nu}}|,$ then $\delta_{\nu}^{-1}=1+K^{-1}|\lambda_{N_{\nu}}|^{1-q}.$ Further, we restrict our reasonings considering the   case $\mu\in(0,1],$ since the reasonings corresponding to the case $\alpha>1,\,\mu\in(1,\infty)$ is absolutely analogous.
Note that  in accordance with Lemma 3 \cite{firstab_lit:1Lidskii}  the following relation holds
$$
\sum\limits_{i=1}^{\infty}\lambda^{\frac{\varrho}{ (m+1)}}_{i}( \tilde{A} )\leq \sum\limits_{i=1}^{\infty}\lambda^{ \varrho }_{i}( A )<\infty,
$$
where   $\varrho$ is chosen so that   $2/\mu<\varrho<\alpha,\,m=[\varrho],\;\tilde{A}:=(A^{\ast m+1}A^{m+1})^{1/2}.$  It is clear that   $\tilde{A}\in \mathfrak{S}_{\varrho/(m+1)}.$
Consider  a  function
$$
\beta(r )= r^{ -\frac{\varrho}{ (m+1)} }\left(\int\limits_{0}^{r}\frac{n_{A^{m+1}}(t)dt}{t }+
r \int\limits_{r}^{\infty}\frac{n_{A^{m+1}}(t)dt}{t^{ 2  }}\right).
$$ Here, we produce the variant  of the proof corresponding to the case   $\mathrm{H}1.$   The variant of the proof corresponding to the case   $\Theta(A) \subset   \mathfrak{L}_{0}(\theta),\,\theta< \pi/2\alpha$ is   analogous and left to the reader. Consider a contour $\Gamma(A),$
absolutely analogously to the reasonings of Theorem \ref{T2}, applying Lemma \ref{L5}, we claim that  there exists an arch
$\tilde{\gamma}_{ \nu }:=\{\lambda:\;|\lambda|=\tilde{R}_{\nu},\,|\mathrm{arg } \lambda|< \theta_{\iota}+\varepsilon\} $    in the ring $(1-\delta_{\nu})R_{\nu}<|\lambda|<R_{\nu},$ on which the following estimate holds for  a sufficiently     small value $\delta>0$
$$
 I_{\nu} =\left\|\,\int\limits_{\tilde{\gamma}_{ \nu }}e^{-\lambda^{\alpha}t}A(I-\lambda A)^{-1}f d \lambda\,\right\|_{\mathfrak{H}}\leq C e^{|\lambda|^{\varrho}[\gamma (|\lambda|)-t|\lambda|^{\alpha-\varrho} \sin     \alpha \delta]}|\lambda|^{m+1}
 ,\,|\lambda|=\tilde{R}_{\nu} ,
$$
where
$\gamma (|\lambda|)= \beta  ( |\lambda|^{m+1})  +(2+\ln\{12e/\delta_{\nu}\}) \beta (|2e  \lambda| ^{m+1}) (2e) ^{\varrho}.$
It is clear that within the contour $\Gamma(A)$ between the arches $\tilde{\gamma}_{ \nu },\tilde{\gamma}_{ \nu+1 }$ (we denote the boundary of this  domain by $\gamma_{ \nu}$) there  lie the eigenvalues only for which the following relation holds
$$
|\lambda_{N_{\nu}+k}|-|\lambda_{N_{\nu}+k-1}|\leq C |\lambda_{N_{\nu}+k}|^{q},\;
 k=2,3,...,N_{\nu+1}-N_{\nu}.
$$
Using Lemma \ref{L8},   we   obtain a relation
$$
 \frac{1}{2\pi i}\int\limits_{\gamma_{ \nu}}e^{-\lambda^{\alpha}t}A(I-\lambda A)^{-1}f d \lambda
=     \sum\limits_{q=N_{\nu}+1}^{N_{\nu+1}}\sum\limits_{\xi=1}^{m(q)}\sum\limits_{i=0}^{k(q_{\xi})}e_{q_{\xi}+i}c_{q_{\xi}+i}(t).
$$
Hence,   to prove the main claim of the theorem, we should show that the series composed of    the above terms converges. Here, we want to realize  the idea of splitting $\gamma_{ \nu}$ on terms.
Let us prove that the   series $I$ converges.
Substituting $\delta_{\nu}^{-1},$ we have
$
\ln\{12e/\delta_{\nu}\} =   \ln\{12e+ 12eK^{-1}|\lambda_{N_{\nu}}|^{1-q }\}\leq C\ln\{ |\lambda_{N_{\nu}}|^{1-q }\}.
$
It is clear that to obtain the desired result we should prove that $$\ln  |\lambda_{N_{\nu}}|^{1-q} \beta (| \lambda_{N_{\nu}}| ^{m+1})\rightarrow 0,\;\nu\rightarrow\infty.$$
  Using simple reasonings based on the fact $\ln \!|\lambda_{N_{\nu}}| /|\lambda_{N_{\nu}}|^{\varepsilon} \rightarrow 0,\,\varepsilon>0,\,\nu\rightarrow\infty,$  applying Lemma \ref{L2}, we obtain the desired result.
  Finally, we should consider  the integrals along the contours $\gamma_{\nu_{+}}:=
\{\lambda:\,(1-\delta_{\nu})R_{\nu}<|\lambda|<R_{\nu},\, \mathrm{arg} \lambda  =\theta_{s} +\varepsilon\},\,\gamma_{\nu_{-}}:=
\{\lambda:\,(1-\delta_{\nu})R_{\nu}<|\lambda|<R_{\nu},\, \mathrm{arg} \lambda  =-\theta_{s}  -\varepsilon\},$ where $s=0,\iota.$     Analogously to Theorem \ref{T2}, applying Lemma \ref{L9}, we have
$$
 J^{+}_{\nu}: =\left\|\,\int\limits_{\gamma_{\nu_{+}}}e^{-\lambda^{\alpha}t}A(I-\lambda A)^{-1}f d \lambda\,\right\|_{\mathfrak{H}}\leq  C\|f\|_{\mathfrak{H}}    \int\limits_{(1-\delta_{\nu})R_{\nu} }^{R_{\nu} } |e^{- t \lambda^{\alpha}}|  |d   \lambda|\leq
   $$
   $$
   \leq C e^{-t(1-\delta_{\nu})^{\alpha}R_{\nu}  ^{\alpha} \sin     \alpha\delta} \!\!\!\!  \int\limits_{(1-\delta_{\nu})R_{\nu} }^{R_{\nu} }   |d   \lambda|=
  C e^{-t(1-\delta_{\nu})^{\alpha}R_{\nu}  ^{\alpha} \sin     \alpha\delta}  \delta_{\nu} R_{\nu};
$$
 $$
 J^{-}_{\nu}: =\left\|\,\int\limits_{\gamma_{\nu_{+}}}e^{-\lambda^{\alpha}t}A(I-\lambda A)^{-1}f d \lambda\,\right\|_{\mathfrak{H}} \leq C e^{-t(1-\delta_{\nu})^{\alpha}R_{\nu}  ^{\alpha} \sin     \alpha\delta}  \delta_{\nu} R_{\nu}.
$$
Therefore the series $J^{+},J^{-}$ are convergent. Thus, we obtain \eqref{13e}, from what follows the rest part of the theorem claim.
 \end{proof}
    Remaind that  a sequence of  contours of the exponential type   was  considered in the paper \cite{firstab_lit:1Lidskii}, under   the imposed  condition $\alpha>\rho.$  We improve this result in the following  sense,   we produce a sequence of  contours of the power type,  what  gives us a solution of the problem in the case $A\in \tilde{\mathfrak{S}}_{\rho},\;\alpha=\rho.$
\begin{teo}\label{T4}
Assume that a normal operator $\tilde{W}$ satisfies the condition $
 (\ln^{1+1/\alpha}x)'_{\lambda_{i}(H)}  =o(  i^{-1/\alpha}   ),$
  $\alpha>1.$
Then a sequence of the natural  numbers  $\{N_{\nu}\}_{0}^{\infty}$ can be chosen  so that
\begin{equation*}\label{2a}
 \frac{1}{2\pi i}\int\limits_{\varpi}e^{-\lambda^{\alpha}t}A(I-\lambda A)^{-1}f d \lambda
=  \sum\limits_{\nu=0}^{\infty}  \sum\limits_{q=N_{\nu}+1}^{N_{\nu+1}}\sum\limits_{\xi=1}^{m(q)}\sum\limits_{i=0}^{k(q_{\xi})}e_{q_{\xi}+i}c_{q_{\xi}+i}(t),
\end{equation*}
moreover
\begin{equation}\label{13b1}
  \sum\limits_{\nu=0}^{\infty}\left\|\sum\limits_{q=N_{\nu}+1}^{N_{\nu+1}}\sum\limits_{\xi=1}^{m(q)}\sum\limits_{i=0}^{k(q_{\xi})}e_{q_{\xi}+i}c_{q_{\xi}+i}(t)\right\|_{\mathfrak{H}}<\infty,
\end{equation}
 the following relation holds for the corresponding  eigenvalues
$$
|\lambda_{N_{\nu}+k}|-|\lambda_{N_{\nu}+k-1}|\leq C |\lambda_{N_{\nu}+k}|^{1-1/\tau},\;
 k=2,3,...,N_{\nu+1}-N_{\nu},\;0<\tau<1/\alpha.
$$
\end{teo}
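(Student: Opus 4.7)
The plan is to follow the architecture of Theorem \ref{T3} verbatim, with the crucial refinement that the hypothesis together with normality puts us exactly in the critical regime $\alpha=\rho$. First I would invoke Lemma \ref{L4} with $\kappa=1/\alpha$: from the assumed growth rate of $(\ln^{1+1/\alpha}x)'_{\lambda_i(H)}$ and the normality of $\tilde{W}$, one obtains $A\in\tilde{\mathfrak{S}}_{\alpha}$ together with the density estimate
$$
n_{A}(r)=o\!\left(r^{\alpha}/\ln r\right),\qquad r\to\infty.
$$
Since $A$ is normal, $|\lambda_{i}(A)|=s_{i}(A)$, so Theorem \ref{T1}(B) applied through the normal structure yields $|\lambda_{i}(A)|=o(i^{-\tau})$ for every $0<\tau<1/\alpha$. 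Exactly as in Theorem \ref{T3} (using the implication for gap subsequences borrowed from the proof of Lemma 2 of \cite{firstab_lit:2Agranovich1994}), this produces the indices $\{N_{\nu}\}_{0}^{\infty}$ with the asserted gap property, and the radii $R_{\nu}$, $\delta_{\nu}$ defined by $R_{\nu}=K|\lambda_{N_{\nu}}|^{q}+|\lambda_{N_{\nu}}|$, $\delta_{\nu}^{-1}=1+K^{-1}|\lambda_{N_{\nu}}|^{1-q}$ where $q=1-1/\tau$.

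Next I would set up the contour decomposition on $\varpi$ (either $\gamma(A)$ or $\Gamma(A)$ depending on which side of the alternative is in force) precisely as in Theorem \ref{T3}: between consecutive arcs $\tilde{\gamma}_{\nu}$ on circles $|\lambda|=\tilde{R}_{\nu}$, $(1-\delta_{\nu})R_{\nu}<\tilde{R}_{\nu}<R_{\nu}$, Lemma \ref{L5} supplies the estimate
$$
\|(I-\lambda A)^{-1}\|_{\mathfrak{H}}\leq e^{\gamma(|\lambda|)|\lambda|^{\alpha}}|\lambda|^{m},\qquad m=[\alpha],
$$
with $\gamma(|\lambda|)=\beta(|\lambda|^{m+1})+(2+\ln\{12e/\delta_{\nu}\})\beta(|2e\lambda|^{m+1})(2e)^{\alpha}$. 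Then Lemma \ref{L8} identifies the contour integral around the resulting finite pocket with the inner partial sum of the series, and the task is reduced to proving absolute convergence of $I$, $J^{+}$, $J^{-}$.

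The hard part — and the actual novelty over Lidskii and over Theorem \ref{T3} — is the $I_{\nu}$-arc estimate in the critical case $\alpha=\rho$. Writing as in Theorem \ref{T3} the bound
$$
I_{\nu}\leq C\,e^{|\lambda|^{\alpha}[\gamma(|\lambda|)-t\sin\alpha\delta]}|\lambda|^{m+1},\qquad |\lambda|=\tilde{R}_{\nu},
$$
one no longer has the margin $|\lambda|^{\alpha-\rho}\to\infty$ to absorb $\gamma(|\lambda|)$, so one must prove directly that $\gamma(|\lambda|)\to 0$ strictly faster than the inverse of the $\ln\{12e/\delta_{\nu}\}$ factor. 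Since $\ln\{12e/\delta_{\nu}\}\leq C\ln|\lambda_{N_{\nu}}|^{1-q}\leq C\ln|\lambda|$, the whole matter collapses to showing $\beta(|\lambda|^{m+1})\ln|\lambda|\to 0$. This is precisely the content of Lemma \ref{L3}, whose hypothesis $\ln r\cdot n_{A^{m+1}}(r)/r^{\alpha/(m+1)}\to 0$ I would verify using normality of $A$: $s_{i}(A^{m+1})=s_{i}(A)^{m+1}$ gives $n_{A^{m+1}}(r)=n_{A}(r^{1/(m+1)})$, and substituting the bound $n_{A}(r)=o(r^{\alpha}/\ln r)$ from Lemma \ref{L4} closes the loop. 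Thus $\gamma(|\lambda|)\ln|\lambda|\to 0$ and for each fixed $t>0$ the bracket $\gamma(|\lambda|)-t\sin\alpha\delta$ becomes negative for large $|\lambda|$, with enough margin to kill the polynomial factor $|\lambda|^{m+1}$ and produce a convergent $\sum I_{\nu}$.

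Finally, the ray pieces $J^{\pm}_{\nu}$ are controlled by $\|(I-\lambda A)^{-1}\|\leq C$ on $\mathrm{Fr}\,\varpi$ (Lemma \ref{L6} or Lemma \ref{L9}) and the estimate $|e^{-\lambda^{\alpha}t}|\leq e^{-t(1-\delta_{\nu})^{\alpha}R_{\nu}^{\alpha}\sin\alpha\delta}$; the double-exponential decay in $R_{\nu}^{\alpha}$ crushes the linear factor $\delta_{\nu}R_{\nu}$ so that $\sum J^{\pm}_{\nu}<\infty$. Putting the three sums together yields \eqref{13b1}, and Lemma \ref{L10} (or Lemma \ref{L7}) identifies the limit of the contour integrals with the action of the operator on $f\in\mathrm{R}(A)$, completing the proof along the same lines as Theorem \ref{T2} and Theorem \ref{T3}.
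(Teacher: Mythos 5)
Your proposal follows essentially the same route as the paper's own proof: Theorem \ref{T1} plus the Agranovich gap implication to build $\{N_{\nu}\}$, Lemma \ref{L4} for $n_{A}(r)=o(r^{\alpha}/\ln r)$, Lemma \ref{L5} for the arc estimate, Lemma \ref{L8} for the residues, and the reduction of the critical case $\alpha=\rho$ to $\beta(|\lambda|^{m+1})\ln|\lambda|\to 0$ via Lemma \ref{L3} and the identity $n_{A^{m+1}}(r^{m+1})=n_{A}(r)$. The only difference is that you cite $s_{i}(A^{m+1})=s_{i}(A)^{m+1}$ for normal $A$ as standard, whereas the paper devotes a substantial block to proving it from the spectral theorem; this is a matter of detail, not of approach.
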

\begin{proof}
Applying Theorem 1, we get
$$
|\lambda^{-1}_{i}|=  o\left(i^{-\tau}    \right)\!,\,i\rightarrow \infty,\;0<\tau<1/\alpha,
$$
Thus, using the fact   $\lambda_{i}/i^{ \tau}\geq C,$  we can prove that  that there exists a subsequence $\{\lambda_{N_{\nu}}\}_{\nu=0}^{\infty},$
 such that
$$
|\lambda_{N_{\nu}+1}|-|\lambda_{N_{\nu}}|\geq K |\lambda_{N_{\nu}+1}|^{1-1/\tau},\,K>0,
$$
for this purpose it suffices to establish  the following implication
$$
\lim\limits_{n\rightarrow\infty}(\lambda_{n+1}-\lambda_{n})/\lambda^{(p-1)/p}_{n}=0,\;\Longrightarrow  \lim\limits_{n\rightarrow\infty} \lambda_{n}/n^{p}=0,\;p>0
$$
(see proof of Lemma 2 \cite{firstab_lit:2Agranovich1994}).
Now, consider
$$
|\lambda_{N_{\nu}+1}|-|\lambda_{N_{\nu}}|\geq K |\lambda_{N_{\nu}}|^{q},\;q:=1-1/\tau,
$$
and let us find $\delta_{\nu}$ from the condition $R_{\nu}=K|\lambda_{N_{\nu}}|^{q}+|\lambda_{N_{\nu}}|,\,R_{\nu}(1-\delta_{\nu})=|\lambda_{N_{\nu}}|,$ then $\delta_{\nu}^{-1}=1+K^{-1}|\lambda_{N_{\nu}}|^{1-q}.$ In accordance with Lemma \ref{L4}, we have   $A\in \tilde{\mathfrak{S}}_{\rho} ,\;\rho\in[0,\alpha],\; n_{A}(r)=o( r^{\alpha}/\ln r).$
Here, we produce the variant  of the proof corresponding to the case   $\mathrm{H}1.$   The variant of the proof corresponding to the case   $\Theta(A) \subset   \mathfrak{L}_{0}(\theta),\,\theta< \pi/2\alpha$ is absolutely  analogous and left to the reader. Consider a contour $\Gamma(A),$
 applying Lemma \ref{L5}  analogously to the reasonings of Theorem \ref{T2},  we claim that  for a sufficiently   small $\delta>0,$   there exists an arch
$\tilde{\gamma}_{ \nu }:=\{\lambda:\;|\lambda|=\tilde{R}_{\nu},\,|\mathrm{arg } \lambda|< \theta +\varepsilon\}$    in the ring $(1-\delta_{\nu})R_{\nu}<|\lambda|<R_{\nu},$ on which the following estimate holds
\begin{equation}\label{13b}
 I_{\nu} =\left\|\,\int\limits_{\tilde{\gamma}_{ \nu }}e^{-\lambda^{\alpha}t}A(I-\lambda A)^{-1}f d \lambda\,\right\|_{\mathfrak{H}}\leq e^{|\lambda|^{\alpha}[\gamma (|\lambda|)-t  \sin     \alpha \delta]}|\lambda|^{m+1}
 ,\,m=[\alpha],\,|\lambda|=\tilde{R}_{\nu},
\end{equation}
where
$\gamma(|\lambda|)= \beta ( |\lambda|^{m+1})  +(2+\ln\{12e/\delta_{\nu}\}) \beta(|2e  \lambda| ^{m+1}) (2e) ^{\alpha},$
$$
\beta(r )= r^{ -\frac{\alpha}{  m+1 } }\left(\int\limits_{0}^{r}\frac{n_{A^{m+1}}(t)dt}{t }+
r \int\limits_{r}^{\infty}\frac{n_{A^{m+1}}(t)dt}{t^{ 2  }}\right).
$$
It is clear that within the contour $\Gamma(A),$ between the arches $\tilde{\gamma}_{ \nu },\tilde{\gamma}_{ \nu+1 }$ (we denote the boundary of this  domain by $\gamma_{ \nu}$) there  lie the eigenvalues only for which the following relation holds
$$
|\lambda_{N_{\nu}+k}|-|\lambda_{N_{\nu}+k-1}|\leq C |\lambda_{N_{\nu}+k}|^{q},\;
 k=2,3,...,N_{\nu+1}-N_{\nu}.
$$
Using Lemma \ref{L8},   we   obtain a relation
$$
 \frac{1}{2\pi i}\int\limits_{\gamma_{ \nu}}e^{-\lambda^{\alpha}t}A(I-\lambda A)^{-1}f d \lambda
=     \sum\limits_{q=N_{\nu}+1}^{N_{\nu+1}}\sum\limits_{\xi=1}^{m(q)}\sum\limits_{i=0}^{k(q_{\xi})}e_{q_{\xi}+i}c_{q_{\xi}+i}(t).
$$
It is clear that to obtain the desired result, we should prove that the series composed  of the above terms converges. However, we can prove   that the series $I$ converges, what is more stronger condition.  Here, we want to realize  the idea of splitting $\gamma_{ \nu}$ on terms.
 Consider the right-hand side of formula \eqref{13b}.
Substituting $\delta^{-1},$ we have
$$
\ln\{12e/\delta\} =   \ln\{12e+ 12eK^{-1}|\lambda_{N_{\nu}}|^{1-q }\}\leq C\ln\{ |\lambda_{N_{\nu}}|^{1-q }\}.
$$
Hence, to obtain the desired result we should prove that $\ln  |\lambda_{N_{\nu}}|^{1-q} \beta (| \lambda_{N_{\nu}}| ^{m+1})\rightarrow 0,\,\nu\rightarrow\infty.$ In its own turn, using  Lemma \ref{L3}, we can prove the latter relation, if we show that
\begin{equation}\label{13a}
\ln r\frac{n_{A^{m+1}}(r^{m+1})}{r^{\alpha} }\rightarrow 0,\;r\rightarrow\infty.
\end{equation}
We need  establish some facts. Notice  that the following operators  have the same eigenfunctions  i.e.
\begin{equation}\label{14a}
(A^{\ast}A )^{1/2}f_{n}=\mu_{n}f_{n} \Longleftrightarrow (A^{ s \ast}A^{s})^{1/2}f_{n}=\mu^{s}_{n}f_{n},\,s=m+1.
\end{equation}
To prove this fact, firstly let us show that $A^{\ast s}=A^{s \ast},$ it follows easily from the inclusion $A^{\ast s}\subset A^{s \ast}$ and the fact
$\mathrm{D}(A^{\ast s})=\mathfrak{H}.$ Thus, for a normal operator we have $(A^{\ast}A)^{s}=A^{\ast  s}A^{s}.$ Let us involve the spectral theorem for    the selfadjoint non-negative  operator, in accordance with a definition (see \cite{firstab_lit:Krasnoselskii M.A.} Chapter 3), we have
$$
(A^{\ast}A)^{\vartheta}=\int\limits_{0}^{\|A^{\ast}A\|}\!\lambda^{\vartheta}dP_{\lambda},
\,\vartheta>0,
$$
where the latter integral is  understood in the Riemann  sense as a limit of the partial sums
\begin{equation*}
\sum\limits_{i=0}^{n}\xi^{\vartheta}_{i} P_{\Delta \lambda_{i}}  \stackrel{\mathfrak{H}}{ \longrightarrow} \int\limits_{0}^{\|A^{\ast}A\|}\lambda^{\vartheta}dP_{\lambda},\,\omega\rightarrow 0,
\end{equation*}
where $(0=\lambda_{0}<\lambda_{1}<...<\lambda_{n}=\|A^{\ast}A\|)$ is an arbitrary splitting of the segment $[0,\|A^{\ast}A\|],\;\omega:=\max\limits_{i}(\lambda_{i+1}-\lambda_{i}),\;\xi_{i}$ is an arbitrary point belonging to $[\lambda_{i},\lambda_{i+1}],$ the operators $P_{\Delta \lambda_{i}}$ are projectors corresponding to the selfadjoint operator. It follows easily from the well-known facts that if in additional $A^{\ast}A$ is a compact operator, then the above formula reduces to
\begin{equation*}
(A^{\ast}A)^{\vartheta}f=\sum\limits_{n=1}^{\infty} \lambda^{\vartheta}_{n}(f,\varphi_{n})_{\mathfrak{H}}\varphi_{n},
\end{equation*}
where $\{\varphi_{n}\}_{1}^{\infty},\,\{\lambda_{n}\}_{1}^{\infty}$ are  sets of the  eigenvectors and the eigenvalues of the operator $A^{\ast}A$ respectively. Taking into account   the latter representation, an obvious fact that $(A^{\ast}A)^{\vartheta}$ is selfadjoint, it is not hard to prove
$$
(A^{\ast}A)^{ \frac{1}{2}\cdot s}=(A^{\ast}A)^{  s  \cdot \frac{1}{2}}.
$$
Thus, using the property $A^{\ast}A=AA^{\ast},$ we get
$$
(A^{\ast}A)^{ \frac{1}{2}\cdot s}=(A^{s\ast}A^{s})^{\frac{1}{2}},
$$
from what follows the implication from the left-hand side of formula \eqref{14a}. To obtain the  contrary  implication we should establish the fact that  the operator and its positive powers  have the same  eigenvectors. For this purpose, let us notice that
$$
T^{\vartheta}\varphi_{i}=\lambda^{\vartheta}_{i}\varphi_{i},\,i\in \mathbb{N},
$$
where $T:=A^{\ast}A.$ It follows that
$$
T^{\vartheta}f=\sum\limits_{n=1}^{\infty} \lambda^{\vartheta}_{n}(f,\varphi_{n})_{\mathfrak{H}}\varphi_{n}=\sum\limits_{n=1}^{\infty}  (f,T^{\vartheta}\varphi_{n})_{\mathfrak{H}}\varphi_{n}=\sum\limits_{n=1}^{\infty}  (T^{\vartheta}f, \varphi_{n})_{\mathfrak{H}}\varphi_{n}.
$$
 Hence, we have a fact
 $$
g =\sum\limits_{n=1}^{\infty}  (g, \varphi_{n})_{\mathfrak{H}}\varphi_{n},\;g\in \mathrm{R}(T^{\vartheta}).
 $$
 Let us assume that there exists an eigenfunction $h$ of the operator $T^{\vartheta}$ that differs from $\varphi_{i},\;i\in \mathbb{N}.$ Using the  proved above fact,  we get
$$
T^{\vartheta}h=\sum\limits_{n=1}^{\infty} \lambda^{\vartheta}_{n}(h,\varphi_{n})_{\mathfrak{H}}\varphi_{n}=\zeta\sum\limits_{n=1}^{\infty}  ( h, \varphi_{n})_{\mathfrak{H}}\varphi_{n},
$$
where $\zeta$  is a corresponding eigenvalue. Multiplying (in the sense of the inner product) both sides of the latter relation on $\varphi_{k},\varphi_{k+1}$ we get $\lambda^{\vartheta}_{k}= \zeta=\lambda^{\vartheta}_{k+1},$   this contradiction proves the desired result.
Thus, we complete the proof of formula \eqref{14a}. To complete the proof of relation \eqref{13a} we need   mention  the fact $n_{A}(\lambda)=n_{A^{m}}(\lambda^{m})$ that follows easily from relation \eqref{14a}. Thus, making a substitution and using  the theorem condition, we claim that relation \eqref{13a} holds, hence the series $I$ is convergent. To complete the proof, we should note that the integrals along the following  contours converges uniformly  $\gamma_{\nu_{+}}:=
\{\lambda:\,(1-\delta_{\nu})R_{\nu}<|\lambda|<R_{\nu},\, \mathrm{arg} \lambda  =\theta_{s}  +\varepsilon\},\,\gamma_{\nu_{-}}:=
\{\lambda:\,(1-\delta_{\nu})R_{\nu}<|\lambda|<R_{\nu},\, \mathrm{arg} \lambda  =-\theta_{s}  -\varepsilon\},$ where $s=0,\iota.$    Analogously to the reasonings of  Theorem \ref{T2}, applying Lemma \ref{L9}, we have
$$
 J^{+}_{\nu}: =\left\|\,\int\limits_{\gamma_{\nu_{+}}}e^{-\lambda^{\alpha}t}A(I-\lambda A)^{-1}f d \lambda\,\right\|_{\mathfrak{H}}\leq C\|f\|_{\mathfrak{H}} \!\!\! \int\limits_{(1-\delta_{\nu})R_{\nu} }^{R_{\nu} } |e^{- t \lambda^{\alpha}}|  |d   \lambda|\leq C e^{-t(1-\delta)^{\alpha}R_{\nu}  ^{\alpha} \sin     \alpha\delta}   \!\!\!\int\limits_{(1-\delta_{\nu})R_{\nu} }^{R_{\nu} }   |d   \lambda|=
 $$
 $$
 = Ce^{-t(1-\delta_{\nu})^{\alpha}R_{\nu}  ^{\alpha} \sin     \alpha\,\delta}  \delta_{\nu} R_{\nu};
$$
 $$
J^{-}_{\nu}: =\left\|\,\int\limits_{\gamma_{\nu_{-}}}e^{-\lambda^{\alpha}t}A(I-\lambda A)^{-1}f d \lambda\,\right\|_{\mathfrak{H}}\leq Ce^{-t(1-\delta_{\nu})^{\alpha}R_{\nu}  ^{\alpha} \sin     \alpha\delta}  \delta_{\nu} R_{\nu}.
$$
Therefore, the   series $J^{+},J^{-}$ converge. Thus, we obtain relation \eqref{13b1}, from what follows the rest part of the proof.
\end{proof}
\begin{corol}\label{C1} Under  the Theorem \ref{T4} assumptions, we get
$$
f
=  \lim\limits_{t\rightarrow+0}\sum\limits_{\nu=0}^{\infty}  \sum\limits_{q=N_{\nu}+1}^{N_{\nu+1}}\sum\limits_{\xi=1}^{m(q)}\sum\limits_{i=0}^{k(q_{\xi})}e_{q_{\xi}+i}c_{q_{\xi}+i}(t),\;f\in \mathrm{D}(\tilde{W}).
$$
This fact follows  immediately from  Lemmas \ref{L7},\ref{L10} respectively.
\end{corol}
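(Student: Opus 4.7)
The plan is to combine the identity established by Theorem \ref{T4} with the limiting identities of Lemmas \ref{L7} and \ref{L10}, the choice between the two being dictated by which of the two alternatives ($\Theta(A)\subset\mathfrak{L}_{0}(\theta)$ with $\theta<\pi/2\alpha$, or the auxiliary assumption $\mathrm{H}3$) is in force and hence whether $\varpi=\gamma(A)$ or $\varpi=\Gamma(A)$. For any fixed $t>0$, Theorem \ref{T4} rewrites the contour integral on $\varpi$ as the absolutely convergent series
\[
\frac{1}{2\pi i}\int\limits_{\varpi}e^{-\lambda^{\alpha}t}A(I-\lambda A)^{-1}f\,d\lambda
=\sum_{\nu=0}^{\infty}\sum_{q=N_{\nu}+1}^{N_{\nu+1}}\sum_{\xi=1}^{m(q)}\sum_{i=0}^{k(q_{\xi})}e_{q_{\xi}+i}c_{q_{\xi}+i}(t),
\]
so the only remaining issue is to pass to the limit $t\to+0$ on the left-hand side.

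First I would verify the admissibility hypothesis of Lemmas \ref{L7} and \ref{L10}, namely that the argument $f$ lies in $\mathrm{R}(A)$. Since $A=R_{\tilde{W}}=\tilde{W}^{-1}$, one has the identification $\mathrm{R}(A)=\mathrm{D}(\tilde{W})$; more explicitly, every $f\in\mathrm{D}(\tilde{W})$ is recovered as $f=A(\tilde{W}f)$, so it belongs to $\mathrm{R}(A)$. This step is short but essential, because Lemmas \ref{L7} and \ref{L10} are stated precisely on the range of the compact operator.

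Next I would apply the appropriate limiting identity. Under the sectorial alternative, Lemma \ref{L7} (with $B=A$, noting that $A$ is compact and $\Theta(A)\subset\mathfrak{L}_{0}(\theta)$ with $\theta<\pi/2\alpha<\pi$) gives
\[
\lim_{t\to+0}\frac{1}{2\pi i}\int\limits_{\gamma(A)}e^{-\lambda^{\alpha}t}A(I-\lambda A)^{-1}f\,d\lambda=f,
\]
while under hypothesis $\mathrm{H}3$ the same conclusion follows from Lemma \ref{L10} with $\varpi=\Gamma(A)$. In either case the left-hand side of the Theorem \ref{T4} formula tends to $f$, and thus the right-hand side, which is a convergent series depending on $t$, has $f$ as its limit as $t\to +0$. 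This establishes the corollary.

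The only delicate point is that the convergence of the series in $t$ is only known term-by-term from the construction of $c_{q_\xi+i}(t)$, while the limit in the corollary is interpreted in the sense $(A,\lambda,\alpha)$ defined earlier; no uniformity in $t$ of the tail of the $\nu$-series is claimed. Consequently no interchange of limit and summation is required — one simply observes that for each $t>0$ the total sum equals the contour integral, and the latter converges to $f$ as $t\to+0$. No genuine obstacle arises, which is why the authors mark the corollary as immediate.
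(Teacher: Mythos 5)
Your proposal is correct and follows essentially the same route as the paper: the paper's one-line proof simply invokes Lemma \ref{L7} (for the contour $\gamma(A)$) or Lemma \ref{L10} (for $\Gamma(A)$) together with the series representation of the contour integral from Theorem \ref{T4}, exactly as you do. Your explicit verification that $f\in\mathrm{D}(\tilde{W})$ lies in $\mathrm{R}(A)$ (since $A=R_{\tilde{W}}$) and your remark that no interchange of limit and summation is needed are the right supporting observations, which the paper leaves implicit.
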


\noindent{\bf Differential equations in the Hilbert space}\\

 Further, we will consider a Hilbert space $\mathfrak{H}$ consists of   element-functions $u:\mathbb{R}_{+}\rightarrow \mathfrak{H},\,u:=u(t),\,t\geq0$    and we will assume that if $u$ belongs to $\mathfrak{H}$    then the fact  holds for all values of the variable $t.$ Notice that under such an assumption all standard topological  properties as completeness, compactness e.t.c. remain correctly defined. We understand such operations as differentiation and integration in the generalized sense that is caused by the topology of the Hilbert space $\mathfrak{H}.$ The derivative is understood as the following  limit
$$
  \frac{u(t+\Delta t)-u(t)}{\Delta t}\stackrel{\mathfrak{H}}{ \longrightarrow}\frac{du}{dt} ,\,\Delta t\rightarrow 0.
$$
Let $t\in I:=[a,b],\,0< a <b<\infty.$ The following integral is understood in the Riemann  sense as a limit of partial sums
\begin{equation*}
\sum\limits_{i=0}^{n}u(\xi_{i})\Delta t_{i}  \stackrel{\mathfrak{H}}{ \longrightarrow}  \int\limits_{I}u(t)dt,\,\lambda\rightarrow 0,
\end{equation*}
where $(a=t_{0}<t_{1}<...<t_{n}=b)$ is an arbitrary splitting of the segment $I,\;\lambda:=\max\limits_{i}(t_{i+1}-t_{i}),\;\xi_{i}$ is an arbitrary point belonging to $[t_{i},t_{i+1}].$
The sufficient condition of the last integral existence is a continuous property (see\cite[p.248]{firstab_lit:Krasnoselskii M.A.}) i.e.
$
u(t)\stackrel{\mathfrak{H}}{ \longrightarrow}u(t_{0}),\,t\rightarrow t_{0},\;\forall t_{0}\in I.
$
The improper integral is understood as a limit
\begin{equation*}
 \int\limits_{a}^{b}u(t)dt\stackrel{\mathfrak{H}}{ \longrightarrow} \int\limits_{a}^{c}u(t)dt,\,b\rightarrow c,\,c\in [0,\infty].
\end{equation*}
Combining these operations we can consider a  fractional differential operator
 in the Riemann-Liouvile sense (see \cite{firstab_lit:samko1987})   i.e. in the formal form, we have
$$
   \mathfrak{D}^{1/\alpha}_{-}f(t):=-\frac{1}{\Gamma(1-1/\alpha)}\frac{d}{d t}\int\limits_{0}^{\infty}f(t+x)x^{-1/\alpha}dx,\;\alpha>1.
$$
Let us study   a Cauchy problem
\begin{equation}\label{17n}
   \mathfrak{D}^{1/\alpha}_{-}  u=\tilde{W}u ,\;u(0)=h\in \mathrm{D}(\tilde{W}),
\end{equation}
in the case  when the operator composition $\mathfrak{D}^{1-1/\alpha}_{-}\tilde{W}$ is   accretive  we assume  that   $h\in \mathfrak{H}.$
 \begin{teo}\label{T5}
Assume that  the Theorem \ref{T4} conditions  hold,   then
there exists a solution of the Cauchy problem \eqref{17n} in the form
\begin{equation}\label{23n}
u(t)= \frac{1}{2\pi i}\int\limits_{\varpi}e^{-\lambda^{\alpha}t}A(I-\lambda A)^{-1}h d \lambda
=  \sum\limits_{\nu=0}^{\infty}  \sum\limits_{q=N_{\nu}+1}^{N_{\nu+1}}\sum\limits_{\xi=1}^{m(q)}\sum\limits_{i=0}^{k(q_{\xi})}e_{q_{\xi}+i}c_{q_{\xi}+i}(t),
\end{equation}
where
\begin{equation*}
  \sum\limits_{\nu=0}^{\infty}\left\|\sum\limits_{q=N_{\nu}+1}^{N_{\nu+1}}\sum\limits_{\xi=1}^{m(q)}\sum\limits_{i=0}^{k(q_{\xi})}e_{q_{\xi}+i}c_{q_{\xi}+i}(t)\right\|<\infty,
\end{equation*}
a sequence of natural numbers $\{N_{\nu}\}_{0}^{\infty}$ can be chosen in accordance with the claim of    Theorem \ref{T4}.
Moreover, the existing solution is unique if the operator composition  $\mathfrak{D}^{1-1/\alpha}_{-}\tilde{W}$ is accretive.
\end{teo}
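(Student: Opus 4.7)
The plan is to verify that the contour integral in \eqref{23n} defines a function $u(t)$ satisfying the Cauchy problem \eqref{17n}, then to treat uniqueness separately using the accretivity hypothesis. First I set $u(t):=\frac{1}{2\pi i}\int_{\varpi}e^{-\lambda^{\alpha}t}A(I-\lambda A)^{-1}h\,d\lambda$; by Theorem \ref{T4} this integral coincides with the indicated norm-convergent Abel--Lidskii series, so existence of the limit in $\mathfrak{H}$ is automatic. The key computational identity is
$$
\mathfrak{D}^{1/\alpha}_{-}e^{-\lambda^{\alpha}t}=\lambda\,e^{-\lambda^{\alpha}t},
$$
which I obtain directly from the definition of the right-sided Riemann--Liouville derivative together with the elementary formula $\int_{0}^{\infty}e^{-\lambda^{\alpha}x}x^{-1/\alpha}dx=\Gamma(1-1/\alpha)\lambda^{-\alpha+1}.$ Interchanging $\mathfrak{D}^{1/\alpha}_{-}$ with the contour integral, which I justify using the resolvent bound of Lemma \ref{L5} combined with the exponential decay of $e^{-\lambda^{\alpha}t}$ along the rays of $\varpi,$ yields
$$
\mathfrak{D}^{1/\alpha}_{-}u(t)=\frac{1}{2\pi i}\int_{\varpi}\lambda\,e^{-\lambda^{\alpha}t}A(I-\lambda A)^{-1}h\,d\lambda.
$$

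Next I apply $\tilde{W}$ to $u(t)$ under the integral sign. Since $\tilde{W}A=I$ on $\mathfrak{H}$ and $\tilde{W}$ commutes with $(I-\lambda A)^{-1},$ we have $\tilde{W}A(I-\lambda A)^{-1}h=(I-\lambda A)^{-1}h=h+\lambda A(I-\lambda A)^{-1}h,$ and therefore
$$
\tilde{W}u(t)=\mathfrak{D}^{1/\alpha}_{-}u(t)+\frac{h}{2\pi i}\int_{\varpi}e^{-\lambda^{\alpha}t}d\lambda.
$$
The extra scalar integral vanishes by a standard contour deformation: the asymptotic direction of the rays on $\varpi$ satisfies $|\arg\lambda|<\pi/(2\alpha)$ (this holds in the $\gamma(A)$-case by the assumption $\theta<\pi/(2\alpha)$ inherited from Theorem \ref{T2}, and in the $\Gamma(A)$-case by choosing the free parameter $k$ in condition $\mathrm{H}3$ large enough so that $\theta_{\iota}<\pi/(2\alpha)$), whence $\mathrm{Re}\,\lambda^{\alpha}\geq c|\lambda|^{\alpha}>0$ there. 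Closing $\varpi$ with an arc of radius $R$ traversing the interior region where $e^{-\lambda^{\alpha}t}$ is analytic, and letting $R\to\infty,$ the arc contribution dies exponentially, so by Cauchy's theorem $\int_{\varpi}e^{-\lambda^{\alpha}t}d\lambda=0.$ This establishes $\mathfrak{D}^{1/\alpha}_{-}u=\tilde{W}u$ for $t>0,$ and the initial condition $u(0)=h$ is exactly Corollary \ref{C1} (equivalently Lemma \ref{L7} or Lemma \ref{L10}) applied with $h\in\mathrm{D}(\tilde{W})=\mathrm{R}(A).$

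For the uniqueness part, assume that $\mathfrak{D}^{1-1/\alpha}_{-}\tilde{W}$ is accretive, and let $v:=u_{1}-u_{2}$ be the difference of two solutions with the same data, so that $v(0)=0$ and $\mathfrak{D}^{1/\alpha}_{-}v=\tilde{W}v.$ Applying $\mathfrak{D}^{1-1/\alpha}_{-}$ and invoking the semigroup property of the right-sided Riemann--Liouville derivatives, $\mathfrak{D}^{1-1/\alpha}_{-}\mathfrak{D}^{1/\alpha}_{-}=\mathfrak{D}^{1}_{-}=-d/dt,$ I obtain $-dv/dt=\mathfrak{D}^{1-1/\alpha}_{-}\tilde{W}v.$ Taking the real part of the inner product with $v$ in $\mathfrak{H}$ and using accretivity,
$$
-\tfrac{1}{2}\frac{d}{dt}\|v(t)\|^{2}_{\mathfrak{H}}=\mathrm{Re}(\mathfrak{D}^{1-1/\alpha}_{-}\tilde{W}v,v)_{\mathfrak{H}}\geq 0,
$$
so $\|v\|^{2}_{\mathfrak{H}}$ is non-increasing, forcing $v\equiv 0$ since $v(0)=0.$

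The main obstacles are expected to be the rigorous control of the improper contour integrals --- specifically, verifying that the bound in Lemma \ref{L5} combined with the decay of $e^{-\lambda^{\alpha}t}$ permits differentiation under the integral sign and the Fubini-type exchange with the fractional integral defining $\mathfrak{D}^{1/\alpha}_{-}$ --- together with the justification of the semigroup identity $\mathfrak{D}^{1-1/\alpha}_{-}\mathfrak{D}^{1/\alpha}_{-}=-d/dt$ on functions of the regularity class produced by the Abel--Lidskii series for $v.$
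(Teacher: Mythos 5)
Your proposal follows essentially the same route as the paper's own proof: the same Gamma-integral identity $\int_{0}^{\infty}x^{-1/\alpha}e^{-\lambda^{\alpha}x}dx=\Gamma(1-1/\alpha)\lambda^{1-\alpha}$ and interchange of the fractional derivative with the contour integral, the same algebraic reduction via $\lambda A(I-\lambda A)^{-1}=(I-\lambda A)^{-1}-I$ with the scalar integral $\int_{\varpi}e^{-\lambda^{\alpha}t}d\lambda$ killed by analyticity and decay, the same appeal to Lemma \ref{L7} (resp. Lemma \ref{L10}) for the initial condition, and the same energy argument $\left(\|v\|^{2}\right)'=-2\mathrm{Re}(\mathfrak{D}^{1-1/\alpha}_{-}\tilde{W}v,v)\leq 0$ for uniqueness. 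The only part of the paper's argument you do not reproduce is the extension of the initial condition to arbitrary $h\in\mathfrak{H}$ in the accretive case, which the paper obtains by first proving the contraction bound $\|S_{t}\|\leq 1$ on $\mathrm{D}(\tilde{W})$ and then passing to the closure by density.
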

\begin{proof}
  Let us find a solution in the form \eqref{23n} satisfying the initial condition \eqref{17n}. Bellow, we produce the variant  of the proof corresponding to the case  $\Theta(A) \subset   \mathfrak{L}_{0}(\theta),\,\theta< \pi/2\alpha.$    The variant of the proof corresponding to the case $\mathrm{H}1$   is absolutely  analogous and left to the reader.
   Consider a contour $\gamma(A).$ 
Using Lemma \ref{L6}, it is not hard to prove  that the following  integral exists i.e.
$$
 \frac{1}{2\pi i} \int\limits_{\gamma(A)}e^{-\lambda^{\alpha}t} (E-\lambda A)^{-1}h d\lambda  \in \mathfrak{H};\;\frac{d u}{d t}=-\frac{1}{2\pi i} \int\limits_{\gamma(A)}e^{-\lambda^{\alpha}t}  \lambda^{\alpha } A(E-\lambda A)^{-1}h \,d\lambda \in \mathfrak{H}.
 $$
Note that the first relation gives us the fact
   $u(t)\in \mathrm{D}(\tilde{W}).$
 Using Lemmas \ref{L5},\ref{L6}  analogously to the methods of the ordinary calculus, we can establish the following formulas
$$
\int\limits_{0}^{\infty}x^{-1/\alpha}dx\int\limits_{\gamma(A)}e^{-\lambda^{\alpha}(t+x)}A(E-\lambda A)^{-1}h d\lambda=\int\limits_{\gamma(A)}e^{-\lambda^{\alpha}t}A(E-\lambda A)^{-1}h d\lambda\int\limits_{0}^{\infty}x^{-1/\alpha}e^{-\lambda^{\alpha}x}dx;
$$
$$
\frac{d}{dt}\int\limits_{\gamma(A)}\lambda^{1-\alpha}e^{-\lambda^{\alpha}t}A(E-\lambda A)^{-1}h d\lambda =
-\int\limits_{\gamma(A)}\lambda e^{-\lambda^{\alpha}t}A(E-\lambda A)^{-1}h d\lambda.
$$
Therefore, combining these formulas, taking into account a relation
$$
  \int\limits_{0}^{\infty}x^{-1/\alpha}e^{-\lambda^{\alpha}x}dx=  \Gamma(1-1/\alpha) \lambda^{1-\alpha},
$$
we get
$$
\mathfrak{D}^{1/\alpha}_{-}u=\frac{1}{2\pi i} \int\limits_{\gamma(A)}e^{-\lambda^{\alpha}t}  \lambda  A(E-\lambda A)^{-1}h d\lambda.
$$
Making a substitution using   the formula $\lambda A(E-\lambda A)^{-1}=(E-\lambda A)^{-1}-E,$   we obtain
$$
  \mathfrak{D}^{1/\alpha}_{-}u= \frac{1}{2\pi i} \int\limits_{\gamma(A)}e^{-\lambda^{\alpha}t}   (E-\lambda A)^{-1}h\, d\lambda-\frac{1}{2\pi i} \int\limits_{\gamma(A)}e^{-\lambda^{\alpha}t}   h \,d\lambda=I_{1}+I_{2}.
$$
The second integral equals zero by virtue  of  the fact  that the   function under the integral is analytical inside the intersection of the  domain $\mathrm{int}\, \gamma(A)$  with the circle of  an arbitrary radius $R$   and it   decreases sufficiently fast on the arch of the radius $R,$ when $R\rightarrow\infty.$ Now, if we consider the    expression for $u,$ we obtain the fact that $u$ is a solution of the equation i.e.
$
\mathfrak{D}^{1/\alpha}_{-}u=\tilde{W }u.
$
 The decomposition   on  the series  of the root vectors \eqref{23n} is obtained due to  Theorem   \ref{T4}.
 Let us show that the initial condition holds in the sense
$
u(t)   \xrightarrow[   ]{\mathfrak{H}}  h,\,t\rightarrow+0.
$
It becomes clear in the case  $h\in \mathrm{D}(\tilde{W}),$   it suffices to apply Lemma \ref{L7},     what  gives  us the desired result. Consider a case when $h$ is an arbitrary element of the Hilbert space $\mathfrak{H}.$ Let us involve the accretive property of the operator composition  $ \mathfrak{D}^{1-1/\alpha}_{-}\tilde{W}  .$
It follows from Lemma \ref{L6}   that for a fixed $t$ the  following operator is bounded
$$
S_{t}h=\frac{1}{2\pi i} \int\limits_{\gamma(A)}e^{-\lambda^{\alpha}t}A(E-\lambda A)^{-1}h d\lambda.
$$
  Let us show that
$
\|S_{t}\|\leq1,\;t>0.
$
Firstly, assume that $h\in \mathrm{D}(\tilde{W}),$ then in accordance with the above,  we get
$
u(t)   \xrightarrow[   ]{\mathfrak{H}}  h,\,t\rightarrow+0.
$
Thus, we can claim the fact that   $u(t) $ is continuous at the right-hand side of the point zero.  Let us apply the operator $\mathfrak{D}^{1-1/\alpha}_{-}$ to the both sides of relation \eqref{17n}. Taking into account a relation
$
\mathfrak{D}^{1-1/\alpha}_{-}\mathfrak{D}^{1/\alpha}_{-}u=-u',
$
we get
$
u'+\mathfrak{D}^{1-1/\alpha}_{-}\tilde{W}u=0.
$
 Let us multiply the both sides of the latter  relation  on $u$  in the sense of the inner product, we get
$
\left(u'_{t},u\right)_{\mathfrak{H}}+(\mathfrak{D}^{1-1/\alpha}_{-}\tilde{W}u,u)_{\mathfrak{H}}=0.
$
Consider a real part of the latter relation, we have
$
\mathrm{Re}\left(u'_{t},u\right)_{\mathfrak{H}}+\mathrm{Re}(\mathfrak{D}^{1-1/\alpha}_{-}\tilde{W}u,u)_{\mathfrak{H}}= \left(u'_{t},u\right)_{\mathfrak{H}}/2+ \left(u, u'_{t}\right)_{\mathfrak{H}}/2+\mathrm{Re}(\mathfrak{D}^{1-1/\alpha}_{-}\tilde{W}u,u)_{\mathfrak{H}}.
$
Therefore
$
   \left(\|u(t)\|_{\mathfrak{H}}^{2}\right)'_{t}  =-2\mathrm{Re}(\mathfrak{D}^{1-1/\alpha}_{-}\tilde{W}u,u)_{\mathfrak{H}}\leq 0.
$
Integrating both sides, we get
$$
  \|u(\tau)\|_{\mathfrak{H}}^{2}-  \|u(0)\|_{\mathfrak{H}}^{2}=\int\limits_{0}^{\tau} \frac{d }{dt}\|u(t)\|_{\mathfrak{H}}^{2} dt\leq 0.
$$
The last relation can be rewritten in the form
$
\|S_{t}h\|_{\mathfrak{H}}\leq \|h\|_{\mathfrak{H}},\,h\in \mathrm{D}(\tilde{W}).
$
Since $\mathrm{D}(\tilde{W})$ is a dense set in $ \mathfrak{H},$ then we obviously  obtain  the  desired result i.e. $\|S_{t}\|\leq 1.$  Now consider the following reasonings, having assumed that
$
h_{n}   \xrightarrow[   ]{\mathfrak{H}}  h,\,n\rightarrow \infty,\;\{h_{n}\}\subset \mathrm{D}(\tilde{W}),\,h\in \mathfrak{H},
$
we have
$
\|u(t)-h\|_{\mathfrak{H}}=\|S_{t}h-h\|_{\mathfrak{H}}=\|S_{t}h-S_{t}h_{n}+S_{t}h_{n}-h_{n}+h_{n}-    h\|_{\mathfrak{H}}\leq \|S_{t}\|\cdot\|h- h_{n}\|_{\mathfrak{H}}+\|S_{t}h_{n}-h_{n}\|_{\mathfrak{H}}+\|h_{n}-    h\|_{\mathfrak{H}}.
$
It is clear that if we chose $n$ so that  $\|h- h_{n}\|_{\mathfrak{H}}<\varepsilon/3$ and  after that  chose $t$   so that $\|S_{t}h_{n}-h_{n}\|_{\mathfrak{H}}<\varepsilon,$ then we obtain
 $\forall\varepsilon>0,\,\exists \delta(\varepsilon):\,\|u(t)-h\|_{\mathfrak{H}}<\varepsilon,\,t<\delta.$ Thus the initial condition holds.   The uniqueness follows easily from the fact that $\mathfrak{D}^{1-1/\alpha}_{-}\tilde{W}$ is accretive. In this case, repeating the previous reasonings we come to
\begin{equation}\label{24m}
  \|g(\tau)\|^{2}-  \|g(0)\|_{\mathfrak{H}}^{2}=\int\limits_{0}^{\tau} \frac{d }{dt}\|g(t)\|^{2} dt\leq 0,
\end{equation}
where $g $ is a sum of two solutions $u_{1} $ and $u_{2}.$ Notice that  by virtue of the initial conditions, we have  $g(0)=0,$  thus    relation \eqref{24m} can hold only if $g =0.$  The proof is complete.
\end{proof}

\end{document}